\newcommand{\iprd}[2]{\left( #1 , #2 \right)}
\newcommand{\Tauh}{\mathcal{T}_h}
\newcommand{\TauH}{\mathcal{T}_H}
\newcommand{\vertiii}[1]{{\left\vert\kern-0.25ex\left\vert\kern-0.25ex\left\vert #1 \right\vert\kern-0.25ex\right\vert\kern-0.25ex\right\vert}}
\def\norm#1#2{\left\| #1 \right\|_{#2}}
\newcommand{\x}{\chi}
\def\0{\mbox{\boldmath $0$}}
\newcommand{\nrm}[1]{\left\| #1 \right\|}
\definecolor{myBlue}{rgb}{0.0,0.0,0.55}
\definecolor{green}{rgb}{0.0,0.7,0.2}
\newcommand{\LC}[1]{\textcolor{black}{#1}}
  \newcounter{mnote}
  \let\oldmarginpar\marginpar
    \renewcommand\marginpar[1]{\-\oldmarginpar[\raggedleft\footnotesize #1]%
    {\raggedright\footnotesize #1}}
\newtheorem{theorem}{Theorem}[section]
\newtheorem{lemma}[theorem]{Lemma}
\newtheorem{corollary}[theorem]{Corollary}
\newtheorem{proposition}[theorem]{Proposition}
\newtheorem{remark}[theorem]{Remark}
\newcommand{\dd}{\,{\rm d}}
\newcommand{\mc}{\mcode}
\begin{document}
\title[Convergence Analysis of FASD]{Convergence Analysis of the \LC{Fast Subspace Descent Methods} for Convex Optimization Problems}

\author{Long Chen}%
\address{Department of Mathematics, University of California at Irvine, Irvine, CA 92697, USA}%
\email{chenlong@math.uci.edu}%
\author{Xiaozhe Hu}%
\address{Department of Mathematics, Tuffs University, Medford, MA 02155, USA}%
\email{Xiaozhe.Hu@tufts.edu}%
\author{Steven M. Wise}%
\address{Mathematics Department, The University of Tennessee, Knoxville, TN 37996, USA}%
\email{swise1@utk.edu}%

\thanks{The second author was supported by NSF Grant DMS-1620063.}
\thanks{The third author was supported by the NSF Grant DMS-1719854.}

\subjclass[2010]
	{
65N55;   
65N22;   
65K10;   
65J15.
	}

	\begin{abstract}
The \LC{full approximation storage} (FAS) scheme is a widely used multigrid method for nonlinear problems. In this paper, a new framework to design and analyze FAS-like schemes for convex optimization problems is developed. The new method, \LC{the Fast Subspace Descent (FASD) scheme}, which generalizes classical FAS, can be recast as an inexact version of nonlinear multigrid methods based on space decomposition and subspace correction. The local problem in each subspace can be simplified to be linear and one gradient descent iteration \LC{(with an appropriate step size)} is enough to ensure a global linear (geometric) convergence of FASD.
	\end{abstract}

	\maketitle


	\section{Introduction}

Most real-world applications are inherently nonlinear. The design of fast algorithms for the solution or approximate solution of nonlinear equations is of fundamental interest to mathematicians, physicists, biologists, and others.  In this paper, we consider solving nonlinear equations arising from the minimization of a convex functional in the abstract Hilbert space setting.  

The well-known Newton-Raphson method is a traditional and popular approach for solving nonlinear equations.  Basically, Newton's method iteratively finds the approximate solution by linearizing the problem near the current iterate.  In the present case, a linear symmetric positive definite system (the Jacobian system) needs to be solved at each Newton's iteration, and fast linear multigrid (MG) methods are sometimes used as a solver. Practically, each linear problem can be approximately inverted by applying a few multigrid iterations. But, if this is done, the quadratic rate of convergence may be sacrificed.  

One  alternative to Newton's method for solving nonlinear PDE is the nonlinear multigrid method, better known as the \LC{full approximation storage (FAS)} scheme. This method, developed by Brandt~\cite{Brandt.A1977} in \LC{the late 70's (see also~\cite{Brandt;Livne:2011Multigrid})} often converges linearly and with optimal complexity in practice.  Recall that the success of multigrid methods relies on two ingredients: 1) high frequency components of the error will be damped by smoothers; and 2) low frequency components of the error can be approximated well on a coarse grid. The smoother used in FAS is usually the nonlinear Gauss-Seidel smoother, which solves many  small-sized (typically  1-D) nonlinear problems on small patches of the mesh.  
For the coarse grid problem, the FAS method uses the full approximation rather than the standard defect, which makes it essentially different from linear MG methods. Due to its high efficiency, the FAS method has been applied to many nonlinear PDE problems, such as in~\cite{Henson.V2005,Spitaleri.R2000,Wise.S;Kim.J;Lowengrub.J2007,Hu;Wise;Wang;Lowengrub:2009Stable,Nash.S2000,Yavneh;Dardyk:2006Multilevel,Huang2016}.  

Although FAS is quite successful in practice, its theoretical analysis is limited.  In~\cite{Hackbusch.W.1985a}, Hackbusch considered nonlinear MG methods for general nonlinear problems.  By imposing conditions on the nonlinear operators and their derivatives, together with standard smoothing and approximation properties, he was able to show that the FAS converges in a sufficiently small neighborhood of the solution on a fine enough mesh.  Moreover, the number of smoothing steps needs to be sufficiently large, and at least the W-cycle should be used.  Later in~\cite{Reusken.A1988a,Reusken.A1988b}, Reusken considered FAS for a class of second order elliptic boundary value problems with mild nonlinearity.   Within this nice class of nonlinear problems, he was able to show the convergence of FAS under weaker assumptions on the nonlinear operators.  We want to mention that the proofs in their work are based on the linearization of the FAS iterations, and the rate of convergence is in some sense local. For example, in \cite{Reusken.A1988b}, Reusken showed that the V-cycle FAS converges locally in a ball with radius shrinking from coarse to fine levels.



In this paper we consider a special class of nonlinear equations that can be viewed as Euler equations of certain convex objective functions. The convergence of MG methods for convex optimization problems has been studied in~\cite{Tai.X;Xu.J1999,Tai;Xu:2001Global} under the framework of subspace correction methods~\cite{Xu.J1992}.  In~\cite{Tai;Xu:2001Global}, Tai and Xu considered some unconstrained convex optimization problems and developed global and uniform convergence estimates for a class of subspace correction iterative methods.  Their approach is based on an abstract space decomposition which is assumed to satisfy the so-called stable decomposition property and strengthened Cauchy Schwarz inequality.  We  point out that in each subspace, the original objective function is used, which is, strictly speaking, naturally defined on the finest level. Furthermore, the local problem should be solved exactly, which is more expensive than what is required in the FAS scheme.


We shall borrow the theoretical framework established in~\cite{Tai;Xu:2001Global} to analyze a hybrid of the FAS and subspace correction methods, what we will call the \LC{\emph{Fast Subspace Descent} (FASD) method}. \LC{In contrast to} the subspace correction method considered in~\cite{Tai;Xu:2001Global}, in which an exact subspace solver is used, we recast FASD as a subspace correction method with an inexact subspace solver, which reduces the computational cost significantly. \LC{In particular, we show that one step of preconditioned gradient descent iteration in each subspace is good enough to guarantee the global convergence.}

\LC{Many other adaptation of FAS to optimization methods can be found in \cite{Gelman;Mandel:1990multilevel,Lewis;Nash:2005problems,Nash.S2000,Gratton;Sartenaer;Toint:2008Recursive} for either  line search-based recursive or trust region-based recursive algorithms. Only plain convergence is established in these work. Here we shall prove a linear convergence for strongly convex optimization problems.}
 
We establish the convergence of the algorithm in the framework of subspace corrections~\cite{Tai;Xu:2001Global}. We first show that, with a one dimensional line search approach, the FASD method converges globally and uniformly under the standard assumptions on the space decomposition. In addition, we borrow some techniques from the optimization literature~\cite{Nesterov:2013Introductory} in order to properly handle the inexactness of the local solver used in FASD.  We introduce a fixed step size to guarantee that the objective function is decreasing globally. For the analysis of original FAS method, which is obtained from the new FASD method via a simple modification, we impose an additional approximation property of the subspace problems and show that FASD converges globally and uniformly. We emphasize that our work represents not only a theoretical advance for the convergence analysis of FAS-type schemes, but also is algorithmically simpler, and even more flexible, than the original FAS. We show that, both theoretically and numerically, each local nonlinear problem can be approximated by a linear problem, and, consequently, the computational cost is reduced significantly.

\LC{The paper is organized as follows. In Section~\ref{sec:problem-assumption}, we present the optimization problem, with its associated Euler equation, in a general Hilbert space framework. We conclude the section with the assumptions on the space decomposition.  The successive subspace optimization (SSO) method are recalled in Section~\ref{sec:SSO}.  The convergence analysis of SSO, based on slightly weaker assumptions compared with~\cite{Tai;Xu:2001Global}, is presented in the same section.  The main global and uniform convergence analyses for FASD with the exact line search and approximate (quadratic) line search are derived in Sections~\ref{sec:FASD} and~\ref{sec:FASD-approx-line}, respectively. The original FAS method is analyzed in Section~\ref{sec:Original-FAS}. In Section~\ref{sec:numerics}, an application problem is considered.}

	\section{Problem and Assumptions}
	\label{sec:problem-assumption}

Given an energy $E(v)$ defined on a Hilbert space $\mathcal V$ equipped with inner product $(\cdot, \cdot)_{\mathcal{V}}$ and norm $\| \cdot \|_{\mathcal{V}}$, we consider the following minimization problem:
	\begin{equation}
	\label{intro:main-opt}
u = \mathop{\rm argmin}_{v\in \mathcal V} E(v).
	\end{equation}
We now make some assumptions that guarantee that the minimizer exists and is unique.

	\subsection{Assumptions on the Energy}

We assume that the energy functional $E(\, \cdot \, ): \mathcal V\to\mathbb{R}$ is Fr\'{e}chet differentiable for all points $v\in \mathcal V$. For each fixed $v\in \mathcal V$, $E'(v):\mathcal V\to\mathbb{R}$ is the continuous linear functional equal to the first Fr\'{e}chet derivative at $v$. We further impose the following assumptions on the energy:

	\begin{itemize}
 	\item[(E1)](Strong convexity):
There is a constant $\mu>0$ such that
	\begin{equation}
\mu \norm{w-v}{\mathcal{V}}^2\le  {\langle E'(w) -E'(v)  , w -v\rangle},
	\label{assmp-L4} 
	\end{equation}
for all $v,w\in \mathcal V$, where $\langle \, \cdot \, , \,  \cdot \,  \rangle$ is the duality pairing between $\mathcal{V}'$ and $\mathcal{V}$.

      \item[(E2)](Lipschitz continuity of the first order derivative): 
For fixed $u_0\in \mathcal{V}$, there exists a constant $L$ such that, for all $v, w\in \mathcal{B}:= \left\{v \in  \mathcal V  \ \middle| \ E(v)\le E(u_0)\right\}$, 
	\begin{equation}
	\label{eqn:Lipschitz}
\| E'(w) - E'(v) \|_{\mathcal{V}'} \leq L \| w - v \|_{\mathcal{V}},
	\end{equation}
where 
	\[
\| f \|_{\mathcal{V}'} := \sup_{\substack{v \in \mathcal{V} \\ \|v\|_{\mathcal V} =1}} \langle f, v \rangle  = \sup_{v\in\mathcal{V}\setminus\{0\}} \frac{\langle f,v\rangle}{\nrm{v}_{\mathcal{V}}}. 
	\] 
	\end{itemize}
	
\LC{Other authors, for example Ciarlet~\cite{Ciarlet89}, use the term \emph{elliptic} for the property in assumption (E1). We should also point out that assumption (E1) is equivalent to the property that the derivative is \emph{strongly  monotone}~\cite{Atkinson09}.}

The following results are classical, and the proof, which is skipped for the sake of brevity, can be found in \cite[p. 35]{Ekeland;Temam:1976Convex}, \cite[Thm.~8.2-2]{Ciarlet89}, or \cite[Thm.~3.3.13]{Atkinson09}.

	\begin{theorem}
	\label{thm:convex&coercive}
If $E$ satisfies assumption (E1), then, for all $w,v\in \mathcal{V}$
	\begin{equation}
E(w) - E(v) \ge \langle E'(v),w-v\rangle + \frac{\mu}{2}\nrm{w-v}_{\mathcal{V}}^2.
	\label{ineq:strict-convexity}
	\end{equation}
Consequently, $E$ is strongly convex and coercive. Furthermore, there is a unique element $u\in \mathcal V$ with the property that
	\[
E(u) \le E(v), \quad \forall \, v \in \mathcal V, \quad \mbox{and} \quad E(u) < E(v), \quad \forall \, v\ne u,
	\]
and this \emph{global minimizer} satisfies Euler equation
	\begin{equation}
	\label{eqn:Euler}
\langle E'(u), w \rangle = 0, \quad \forall \ w \in \mathcal V.
	\end{equation}
	\end{theorem}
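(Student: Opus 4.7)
The plan is to obtain inequality~\eqref{ineq:strict-convexity} first via the fundamental theorem of calculus along the segment from $v$ to $w$, and then to read off strong convexity, coercivity, existence, uniqueness, and the Euler equation in turn. The entire content of the theorem is encoded in~\eqref{ineq:strict-convexity}; everything else is a short consequence.

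First I would set $\phi(t) := E(v + t(w-v))$ on $[0,1]$. Fr\'echet differentiability together with the chain rule gives $\phi'(t) = \langle E'(v + t(w-v)), w-v\rangle$, hence
$$E(w) - E(v) \;=\; \phi(1) - \phi(0) \;=\; \int_0^1 \langle E'(v + t(w-v)),\, w-v\rangle\,dt.$$
Subtracting $\langle E'(v), w-v\rangle$ from the integrand and applying (E1) to the pair $v+t(w-v)$ and $v$, whose difference equals $t(w-v)$, produces $\langle E'(v+t(w-v)) - E'(v),\, w-v\rangle \ge \mu t\,\|w-v\|_{\mathcal{V}}^{2}$. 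Integrating in $t$ yields the $\frac{\mu}{2}\|w-v\|_{\mathcal{V}}^{2}$ term and completes~\eqref{ineq:strict-convexity}.

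From here strong convexity is immediate: take $z := \lambda v + (1-\lambda) w$ for $\lambda\in[0,1]$, apply~\eqref{ineq:strict-convexity} to the pairs $(z,v)$ and $(z,w)$, and form the $\lambda/(1-\lambda)$-combination; the linear terms cancel and one finds $\lambda E(v)+(1-\lambda)E(w) \ge E(z) + \frac{\mu}{2}\lambda(1-\lambda)\|v-w\|_{\mathcal{V}}^{2}$. For coercivity I would fix any $v_0\in\mathcal{V}$, bound $\langle E'(v_0), w-v_0\rangle \ge -\|E'(v_0)\|_{\mathcal{V}'}\|w-v_0\|_{\mathcal{V}}$ in~\eqref{ineq:strict-convexity}, and note that the quadratic growth eventually dominates the linear penalty, so $E(w)\to\infty$ as $\|w\|_{\mathcal{V}}\to\infty$. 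For existence of a minimizer I would run a Cauchy-sequence argument: given a minimizing sequence $\{v_n\}$, let $m_{n,k}:=(v_n+v_k)/2$, and apply~\eqref{ineq:strict-convexity} with $v=m_{n,k}$ and $w=v_n$, then again with $w=v_k$; adding, the linear contributions cancel (since $v_n+v_k-2m_{n,k}=0$) and a parallelogram-style identity yields $E(v_n)+E(v_k) \ge 2E(m_{n,k}) + \frac{\mu}{4}\|v_n-v_k\|_{\mathcal{V}}^{2}$. Since the left-hand side tends to $2\inf_{\mathcal V}E$ while $E(m_{n,k})\ge \inf_{\mathcal V} E$, the sequence is Cauchy, converges to some $u\in\mathcal{V}$, and continuity of $E$ (which follows from~\eqref{ineq:strict-convexity} together with (E2)) gives $E(u)=\inf_{\mathcal V} E$. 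Uniqueness is then immediate from the strict inequality in~\eqref{ineq:strict-convexity} with $\mu>0$. Finally, the Euler equation~\eqref{eqn:Euler} follows by fixing $w\in\mathcal{V}$ and observing that $t\mapsto E(u+tw)$ attains its minimum at $t=0$, so its derivative $\langle E'(u), w\rangle$ there must vanish.

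The step I expect to require the most thought is existence. A more common route invokes weak compactness of bounded sets in $\mathcal{V}$ together with weak lower semicontinuity of $E$ (which holds because $E$ is convex and continuous), but this brings in topological machinery beyond the assumptions we have. The parallelogram-style Cauchy argument above sidesteps this by using only~\eqref{ineq:strict-convexity} itself, keeping the proof entirely within the strong topology and dovetailing cleanly with the hypotheses (E1)--(E2).
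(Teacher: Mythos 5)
Your proof is correct, and it is worth noting that the paper does not actually prove this theorem: it declares the result classical and defers to Ekeland--Temam, Ciarlet, and Atkinson--Han. Your argument is essentially the standard one found in those references, with the nice feature (which you correctly flag) that the existence step stays entirely in the strong topology: the parallelogram-style estimate $E(v_n)+E(v_k)\ge 2E(m_{n,k})+\frac{\mu}{4}\|v_n-v_k\|_{\mathcal V}^2$ applied to a minimizing sequence is exactly the uniform-convexity Cauchy argument of Ciarlet's Theorem~8.2-2, and it avoids the weak-compactness/weak-lower-semicontinuity machinery used in some of the other treatments. Two small points to tighten. First, your use of the fundamental theorem of calculus for $\phi(t)=E(v+t(w-v))$ tacitly requires $\phi'$ to be integrable; this is true here because (E1) forces $\phi'$ to be nondecreasing (hence Riemann integrable, and in fact continuous by Darboux), but it is cleaner to avoid the issue entirely by noting that $g(t):=\phi(t)-t\langle E'(v),w-v\rangle-\frac{\mu}{2}t^2\|w-v\|_{\mathcal V}^2$ has $g'(t)\ge 0$ by (E1), so $g(1)\ge g(0)$ gives \eqref{ineq:strict-convexity} using only the mean value theorem. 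Second, to conclude $E(u)=\inf E$ at the end of the Cauchy argument you invoke continuity of $E$ ``from \eqref{ineq:strict-convexity} together with (E2)''; the theorem assumes only (E1), so you should not lean on (E2) here --- but no repair is needed beyond observing that the paper's standing hypothesis that $E$ is Fr\'echet differentiable at every point already implies continuity of $E$ on all of $\mathcal V$.
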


The strong convexity and the Lipschitz continuity imply the following estimates:
	\begin{lemma} 
	\label{lm:dE}
Suppose $E$ satisfies assumptions (E1) and (E2). For all $v,w\in \mathcal{B}$, 
	\begin{equation*}
\mu\nrm{w-v}^2_{\mathcal{V}} \le \langle E'(w) -E'(v) , w-v\rangle  \le L\nrm{w-v}^2_{\mathcal{V}}.
	\end{equation*}
Furthermore the lower bound holds for all $v,w\in \mathcal V$.
	\end{lemma}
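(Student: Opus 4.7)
The lower bound is immediate: it is precisely assumption (E1), which holds for all $v,w\in\mathcal V$ (and in particular for $v,w\in\mathcal B\subset\mathcal V$). So only the upper bound requires any argument, and it uses nothing beyond (E2) together with the definition of the dual norm.

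My plan for the upper bound is the one-line duality argument. By the definition of $\|\cdot\|_{\mathcal V'}$, for any bounded linear functional $f\in\mathcal V'$ and any nonzero $z\in\mathcal V$ one has $\langle f,z\rangle\le \|f\|_{\mathcal V'}\|z\|_{\mathcal V}$. Apply this with $f = E'(w)-E'(v)$ and $z = w-v$ to obtain
\[
\langle E'(w)-E'(v),\,w-v\rangle \;\le\; \|E'(w)-E'(v)\|_{\mathcal V'}\,\|w-v\|_{\mathcal V}.
\]
Since $v,w\in\mathcal B$, assumption (E2) bounds the first factor by $L\|w-v\|_{\mathcal V}$, and combining the two estimates yields the desired $L\|w-v\|_{\mathcal V}^2$ upper bound. (The trivial case $w=v$ makes both sides zero.)

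There is no obstacle: the argument is essentially a direct unwinding of definitions plus the two assumptions. The only thing worth flagging is the asymmetry in the conclusion — the lower bound is global on $\mathcal V$ because (E1) is a global hypothesis, whereas the upper bound is restricted to $\mathcal B$ because (E2) only asserts Lipschitz continuity on the sub-level set $\mathcal B$. This matches the way the lemma is stated (the final sentence emphasizes the global validity of the lower bound), so no further work is needed.
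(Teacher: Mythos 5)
Your proof is correct and follows essentially the same route as the paper: the lower bound is read off directly from (E1), and the upper bound is obtained by pairing $E'(w)-E'(v)$ with $z=w-v$, bounding by the dual norm, and invoking the Lipschitz estimate (E2) on $\mathcal{B}$. Your remark about the asymmetry between the global lower bound and the $\mathcal{B}$-restricted upper bound matches the lemma's statement exactly.
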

	\begin{proof}
The lower bound is just assumption (E1). To get the upper bound, observe that (E2) implies that, for all $w,v\in\mathcal{B}$, and for any $z\in \mathcal{V}$,
	\[
\left|\langle E'(w)-E'(v), z \rangle\right|  \le \nrm{E'(w)-E'(v)}_{\mathcal{V}'} \nrm{z}_{\mathcal{V}} \le  L \nrm{w-v}_{\mathcal{V}} \nrm{z}_{\mathcal{V}}.	
	\]
Setting $z = w-v$ gives the desired inequality.
	\end{proof}
	
	\begin{proposition}
	\label{prop:B-convex}
If $E$ satisfies (E1), the \LC{sublevel set} $\mathcal{B}$ is convex.
	\end{proposition}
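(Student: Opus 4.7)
The plan is short because the heavy lifting is already done by Theorem~\ref{thm:convex&coercive}. That theorem establishes, from assumption (E1), that $E$ is in fact (strongly) convex on all of $\mathcal V$, so what remains is the classical fact that sublevel sets of a convex function are convex.

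Concretely, I would fix $v,w\in\mathcal B$ and $t\in[0,1]$, and set $z := tv + (1-t)w$. The goal is to show $E(z) \le E(u_0)$. To obtain this, I would invoke convexity of $E$ (which follows directly from \eqref{ineq:strict-convexity}: taking the two endpoint expansions around $z$ and forming the convex combination $t(\ldots) + (1-t)(\ldots)$ yields $E(z) \le tE(v) + (1-t)E(w)$, with the first-derivative terms cancelling and the quadratic terms contributing a nonpositive correction). Then, using $v,w\in\mathcal B$, I bound
\[
E(z) \le t E(v) + (1-t) E(w) \le t E(u_0) + (1-t) E(u_0) = E(u_0),
\]
so $z\in\mathcal B$, establishing convexity.

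There is no real obstacle: the only thing one needs is the convexity inequality $E(tv+(1-t)w)\le tE(v)+(1-t)E(w)$, which is an immediate consequence of \eqref{ineq:strict-convexity} in Theorem~\ref{thm:convex&coercive}. If desired, one could sharpen the middle step to a strict inequality whenever $v\ne w$ and $t\in(0,1)$ (using the $\frac{\mu}{2}\|v-w\|_{\mathcal V}^2$ term), but strict convexity is not needed for the stated result. The proof should therefore fit comfortably in just a few lines.
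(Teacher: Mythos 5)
Your proof is correct and follows essentially the same route as the paper's: both use the convexity of $E$ (a consequence of (E1) via Theorem~\ref{thm:convex&coercive}) together with the elementary fact that sublevel sets of a convex function are convex. The extra detail you give on deriving $E(tv+(1-t)w)\le tE(v)+(1-t)E(w)$ from \eqref{ineq:strict-convexity} is a fine, if slightly more explicit, version of what the paper takes for granted.
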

	\begin{proof}
Suppose that $v,w\in\mathcal{B}$. Then $E(w)\le E(u_0)$ and $E(v)\le E(u_0)$. Since $E$ is strictly convex, for any $t\in[0,1]$,
	\[
E(u_0) \ge (1-t)E(w) + tE(v) \ge E((1-t)w+t v) .	
	\] 
Thus, $(1-t)w+tv \in\mathcal{B}$, for any $t\in[0,1]$.
	\end{proof}

Now, we consider the relation between the energy and the norm centered at the minimizer. The following estimates can be easily proved using Taylor's theorem with integral remainder; see, e.g.~\cite{Nesterov:2013Introductory}.

	\begin{lemma}[Quadratic Energy Trap]
	\label{lm:energyatu-V}
Suppose $E$ satisfies assumptions (E1) and (E2). For all $v, w \in \mathcal{B}$, 
	\begin{equation} 
	\label{ine:energy}
\frac{\mu}{2} \nrm{w-v}^2_{\mathcal{V}}  + \langle E'(v), w-v \rangle \le E(w)-E(v) \le  \langle E'(v), w-v \rangle + \frac{L}{2}\nrm{w-v}_{\mathcal{V}}^2.
	\end{equation}
Furthermore the lower bound holds for all $v,w\in \mathcal V$. 
In addition, suppose $u \in \mathcal{B}$ is the minimizer of $E$, then for all $w \in \mathcal{B}$, 
	\begin{equation}
	\label{ine:energy-bound}
\frac{\mu}{2} \nrm{w-u}^2_{\mathcal{V}}  \le E(w)-E(u) \le   \frac{L}{2}\nrm{w-u}_{\mathcal{V}}^2.
	\end{equation}
Again the lower bound holds for all $w\in \mathcal V$.
	\end{lemma}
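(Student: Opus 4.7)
The plan is to use Taylor's theorem with integral remainder to express the energy difference as an integral of the derivative along the segment joining $v$ and $w$, and then insert the bounds from Lemma~\ref{lm:dE}. The convexity of $\mathcal{B}$ established in Proposition~\ref{prop:B-convex} is essential so that the integration path stays inside $\mathcal{B}$ whenever the endpoints do.

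First I would write, for $v,w\in\mathcal{V}$ (or $\mathcal{B}$),
\[
E(w)-E(v)-\langle E'(v),w-v\rangle
=\int_0^1 \bigl\langle E'(v+t(w-v))-E'(v),\,w-v\bigr\rangle\,dt.
\]
For $v,w\in\mathcal{B}$, Proposition~\ref{prop:B-convex} guarantees $v+t(w-v)\in\mathcal{B}$ for every $t\in[0,1]$, so Lemma~\ref{lm:dE} applies with the choice $w\mapsto v+t(w-v)$, giving the two-sided bound
\[
\mu t\,\nrm{w-v}_{\mathcal V}^2 \;\le\; \bigl\langle E'(v+t(w-v))-E'(v),\,w-v\bigr\rangle \;\le\; L t\,\nrm{w-v}_{\mathcal V}^2.
\]
Integrating over $t\in[0,1]$ then yields~\eqref{ine:energy} after moving $\langle E'(v),w-v\rangle$ to the appropriate side. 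For the lower bound alone, the same computation goes through on all of $\mathcal{V}$ because assumption~(E1) is stated globally, so Lemma~\ref{lm:dE} provides its lower estimate without restriction to $\mathcal{B}$.

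For the second inequality~\eqref{ine:energy-bound}, I would first verify $u\in\mathcal{B}$: since $u$ minimizes $E$, we have $E(u)\le E(u_0)$, so $u\in\mathcal{B}$. The Euler equation~\eqref{eqn:Euler} gives $\langle E'(u),w-u\rangle=0$ for every $w\in\mathcal V$, so the inner-product term in~\eqref{ine:energy} vanishes when $v=u$, and the upper bound immediately follows for $w\in\mathcal{B}$ while the lower bound holds for all $w\in\mathcal V$.

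No step is particularly delicate; the only real point to check is that the segment $\{v+t(w-v):t\in[0,1]\}$ remains in $\mathcal{B}$ so that the Lipschitz bound of (E2) can be invoked inside the integral, which is exactly what Proposition~\ref{prop:B-convex} supplies. Everything else is the standard one-line integration of $t$ from $0$ to $1$.
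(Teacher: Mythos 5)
Your proof is correct and follows exactly the route the paper indicates (it omits the proof but states it goes via Taylor's theorem with integral remainder): you write the energy difference as $\int_0^1 \langle E'(v+t(w-v))-E'(v),w-v\rangle\,dt$, bound the integrand using Lemma~\ref{lm:dE}, and rightly note that Proposition~\ref{prop:B-convex} keeps the segment inside $\mathcal{B}$ so the Lipschitz upper bound applies. The treatment of the minimizer case via the Euler equation is also exactly as intended.
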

	

Based on assumption (E1), the upper bound can be replaced by a norm of the gradient. Since the proof is less standard, we include it here. 
%

	\begin{lemma}
	\label{lm:dE-energy-V}
Suppose that $E$ satisfies assumption (E1) and $u \in \mathcal{V}$ is the minimizer of $E$; then for all $v \in \mathcal{V}$, 
	\begin{equation}
	\label{ine:energy-dE-V} 
0 \le E(v) - E(u)  \leq \frac{1}{2\mu} \| E'(v) \|_{\mathcal{V}'}^2.
	\end{equation}
	\end{lemma}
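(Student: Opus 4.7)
The lower bound is immediate: since $u$ minimizes $E$ on $\mathcal V$, we have $E(v) \ge E(u)$ for every $v \in \mathcal V$. The content of the lemma is therefore the upper bound, which is a Polyak--{\L}ojasiewicz-type inequality derived from strong convexity alone (no Lipschitz smoothness is needed).

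The plan is to exploit the lower inequality of Lemma~\ref{lm:energyatu-V}, which, under assumption (E1), is valid for \emph{all} $v,w\in\mathcal V$, not merely on the sublevel set $\mathcal B$. Applying it with $v$ arbitrary and $w=u$ and rearranging gives
\[
E(v) - E(u) \le \langle E'(v), v-u\rangle - \frac{\mu}{2}\nrm{v-u}_{\mathcal V}^2.
\]
The first term on the right is then estimated by the definition of the dual norm,
\[
\langle E'(v), v-u\rangle \le \nrm{E'(v)}_{\mathcal V'}\,\nrm{v-u}_{\mathcal V}.
\]

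With $t := \nrm{v-u}_{\mathcal V}$ and $g := \nrm{E'(v)}_{\mathcal V'}$, the previous two inequalities combine to
\[
E(v) - E(u) \le g\,t - \frac{\mu}{2}t^2.
\]
Maximizing the concave quadratic $t\mapsto gt - \tfrac{\mu}{2}t^2$ over $t\in\mathbb R$ (the maximum occurs at $t^\star = g/\mu$) yields the upper bound $g^2/(2\mu) = \nrm{E'(v)}_{\mathcal V'}^2/(2\mu)$, which is exactly \eqref{ine:energy-dE-V}. There is no real obstacle; the only points worth flagging are (i) that one must invoke the ``global'' version of the strong-convexity energy inequality, so that taking $w=u$ is legitimate even when $v\notin\mathcal B$, and (ii) that the passage from the directional pairing $\langle E'(v), v-u\rangle$ to $\nrm{E'(v)}_{\mathcal V'}\,\nrm{v-u}_{\mathcal V}$ is the natural substitute for the Cauchy--Schwarz step used in finite dimensions, which keeps the argument valid in the abstract Hilbert space setting.
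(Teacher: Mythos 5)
Your proof is correct and follows essentially the same route as the paper: both start from the quadratic lower bound furnished by strong convexity (the lower inequality of Lemma~\ref{lm:energyatu-V}, valid on all of $\mathcal{V}$) and then optimize the resulting quadratic to produce the factor $\frac{1}{2\mu}\|E'(v)\|_{\mathcal{V}'}^2$. The only cosmetic difference is that the paper minimizes the quadratic $g(w)$ over all $w\in\mathcal{V}$ by exhibiting its exact minimizer via the Riesz representation of $E'(v)$ and then sets $w=u$, whereas you set $w=u$ first, bound the pairing by the dual norm, and maximize a scalar quadratic — both are valid and yield the identical constant.
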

	\begin{proof}
Fix the point $v\in\mathcal{V}$. Now, for any $w\in \mathcal{V}$, using the lower bound of~\eqref{ine:energy}, we have
	\begin{equation*}
E(w) \geq E(v) +  \langle E'(v), w-v \rangle + \frac{\mu}{2} \| w -v \|_{\mathcal{V}}^2 =: g(w).
	\end{equation*}
For fixed $v \in \mathcal{V}$, the minimizer of $g(w)$ is $w^* := v - \frac{1}{\mu} \mathfrak{R}E'(v)$, where $\mathfrak{R}E'(v)$ is the Riesz representation {in $\mathcal{V}$} of $E'(v)$. Therefore, 
\begin{align*}
E(w) \geq g(w) \geq g(w^*) = E(v) - \frac{1}{2\mu} \| \mathfrak{R}E'(v) \|_{\mathcal{V}}^2= E(v) - \frac{1}{2\mu} \| E'(v) \|_{\mathcal{V}'}^2.
\end{align*}
Then~\eqref{ine:energy-dE-V} is obtained by letting $w = u$ in the above inequality.  
	\end{proof}

We shall often use the following simple variant of Lemma~\ref{lm:energyatu-V}.
	\begin{lemma}[Convexity of Energy Sections]
	\label{lm:energyatu-variant}
Suppose that $E$ satisfies (E1) -- (E2), $\xi\in \mathcal{B}$ is arbitrary, and $\mathcal{W}\subseteq \mathcal{V}$ is a \LC{closed} subspace.  Define the energy section
	\[
J(w) := E(\xi + w),\quad \forall \ w\in \mathcal{W}.
	\]
Then $J:\mathcal{W}\to\mathbb{R}$ is differentiable, strongly convex, and there exists a unique element $\eta\in \mathcal{W}$ such that $\xi+\eta\in \mathcal{B}$, $\eta$ is the unique global minimizer of $J$, and
	\[
\langle E'(\xi+\eta),w\rangle = \langle J'(\eta),w\rangle = 0 , \quad \forall \ w \in \mathcal{W}.
	\]
Furthermore, for all $w \in  \mathcal{W}$ with $w+\xi\in\mathcal{B}$, 
	\[
\frac{\mu}{2} \nrm{w-\eta}^2_{\mathcal{V}}  \le J(w)-J(\eta) = E(\xi+w) - E(\xi+\eta) \le   \frac{L}{2}\nrm{w-\eta}_{\mathcal{V}}^2.
	\]
The lower bound holds for any $w\in\mathcal{W}$, without restriction.
	\end{lemma}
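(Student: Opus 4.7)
The plan is to transfer the assumptions (E1) and (E2) from $E$ on $\mathcal{V}$ down to $J$ on the closed subspace $\mathcal{W}$ (which is itself a Hilbert space with the inherited inner product), and then invoke the results already proved for $E$, namely Theorem~\ref{thm:convex&coercive} and Lemma~\ref{lm:energyatu-V}, applied to $J$. The key identification is the chain-rule relation $\langle J'(w),z\rangle = \langle E'(\xi+w),z\rangle$ for all $w,z\in\mathcal{W}$, from which differentiability of $J$ and the equivalence of the two Euler conditions is immediate.

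Next I would verify that $J$ satisfies (E1) with the same constant $\mu$: for any $w_1,w_2\in\mathcal{W}$, the difference $w_1-w_2$ lies in $\mathcal{W}\subseteq\mathcal{V}$, so
\[
\langle J'(w_1)-J'(w_2),w_1-w_2\rangle = \langle E'(\xi+w_1)-E'(\xi+w_2),(\xi+w_1)-(\xi+w_2)\rangle \ge \mu\|w_1-w_2\|_{\mathcal{V}}^2.
\]
For (E2), I would introduce the sublevel set $\mathcal{B}_J := \{w\in\mathcal{W} : J(w)\le J(0)\}$ and observe that since $J(0)=E(\xi)\le E(u_0)$, we have $\mathcal{B}_J\subseteq\{w\in\mathcal{W} : \xi+w\in\mathcal{B}\}$. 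For $w_1,w_2$ in this larger set, the bound
\[
\|J'(w_1)-J'(w_2)\|_{\mathcal{W}'} \le \|E'(\xi+w_1)-E'(\xi+w_2)\|_{\mathcal{V}'} \le L\|w_1-w_2\|_{\mathcal{V}}
\]
follows because the supremum defining the $\mathcal{W}'$-norm is over a subset of the unit ball in $\mathcal{V}$.

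Having established (E1) for $J$ globally on $\mathcal{W}$ and (E2) on its sublevel set, Theorem~\ref{thm:convex&coercive} applied to $J$ on the Hilbert space $\mathcal{W}$ yields existence and uniqueness of a global minimizer $\eta\in\mathcal{W}$ with $\langle J'(\eta),w\rangle=0$ for all $w\in\mathcal{W}$, which is the required Euler condition. To check that $\xi+\eta\in\mathcal{B}$, I note $E(\xi+\eta)=J(\eta)\le J(0)=E(\xi)\le E(u_0)$. Finally, the two-sided quadratic trap follows from Lemma~\ref{lm:energyatu-V} applied to $J$: the lower bound holds globally on $\mathcal{W}$ because (E1) does, while the upper bound needs the comparison points $w$ and $\eta$ to lie in the $J$-sublevel set; for $w$ this is guaranteed by the hypothesis $\xi+w\in\mathcal{B}$ together with replacing $J(0)$ by $J(\eta)$ if necessary, and $\eta$ is automatic since it is the minimizer.

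The only delicate bookkeeping, and the main potential obstacle, is the domain of validity of the Lipschitz bound: (E2) was stated on $\mathcal{B}$, so when transferring to $J$ I must respect the condition $\xi+w\in\mathcal{B}$, which is precisely why the upper bound in the conclusion carries that restriction while the lower bound does not. Everything else is a routine translation through the identification $w\leftrightarrow \xi+w$ between $\mathcal{W}$ and its affine image $\xi+\mathcal{W}$.
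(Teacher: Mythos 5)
Your proposal is correct, and it follows exactly the route the paper intends: the paper states this lemma without proof as a ``simple variant'' of Lemma~\ref{lm:energyatu-V}, and your argument is the natural completion, transferring (E1) globally and (E2) on the appropriate sublevel set from $E$ to the section $J$ via the identification $\langle J'(w),z\rangle=\langle E'(\xi+w),z\rangle$, then invoking Theorem~\ref{thm:convex&coercive} and Lemma~\ref{lm:energyatu-V} on the Hilbert space $\mathcal{W}$. The bookkeeping you flag (the upper bound requiring $\xi+w\in\mathcal{B}$ and $\xi+\eta\in\mathcal{B}$, the latter automatic since $J(\eta)\le J(0)=E(\xi)\le E(u_0)$, together with the convexity of $\mathcal{B}$ from Proposition~\ref{prop:B-convex}) is handled correctly.
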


The ratio $L/\mu$ is called the condition number of the derivative $E'$; see \cite[page 63]{Nesterov:2013Introductory}. The rate of convergence of iterative methods for solving \eqref{intro:main-opt} usually depends on the condition number. Here we assume $L/\mu$ is uniformly bounded, as long as we remain in $\mathcal{B}$. Then the Riesz map $\mathfrak{R}: \mathcal V' \to \mathcal V$  can be used as a preconditioner and the corresponding preconditioned gradient descent method will converge~\cite{Feng;Salgado;Wang;Wise:2017Preconditioned}. 

Implementing preconditioned gradient descent methods in $\mathcal V$ requires the computation of the Riesz map $\mathfrak{R}$ which is equivalent to inverting a symmetric positive definite (SPD) operator (an SPD matrix of dimension $\dim \mathcal V \times \dim \mathcal V$ when $\dim \mathcal V<+\infty$). Of course we can also use multilevel methods to compute $\mathfrak{R}$ and use steepest descent, nonlinear conjugate gradient, or Newton method as the outer iteration. In the following, we shall provide optimization methods that only require computing inverses with much smaller sizes. 

	\subsection{Assumptions on the Space Decomposition} Suppose that
	\[
\mathcal V = \mathcal V_1 + \mathcal V_2 + \cdots + \mathcal V_N, \qquad  \mathcal V_i\subseteq  \mathcal V, \quad i=1,\ldots, N,
	\]
is a space decomposition of $\mathcal V$ \LC{using closed subspaces} $\mathcal V_i$ for $i=1,2,\ldots, N$. We shall use the following assumptions on the space decomposition.

	\begin{itemize}

	\item[(SS1)] (Stable decomposition:)
There is a constant $C_A > 0$, such that, for every $v\in \mathcal V$, there exists $v_i\in \mathcal V_i$, $i=1, \cdots, N$, with the property that 
	\[
v = \sum _{i=1}^N v_i, \quad \mbox{and} \quad \sum_{i=1}^N \|v_i\|_\mathcal{V}^2 \leq C_A^2\|v\|_\mathcal{V}^2.
	\]

	\item[(SS2)] (Strengthened Cauchy Schwarz inequality:)
There is a constant $C_S >0$, such that, for any $w_{i,j}\in \mathcal B$,  $u_i \in \mathcal V_i$, $v_i\in \mathcal V_i$, with $w_{i,j}+u_i\in \mathcal B$,
	\begin{align*}
\sum_{i=1}^{N}\sum_{j=i+1}^N \langle  E'(w_{i,j}+u_j)-E'(w_{i,j}) , v_i \rangle 
\le \ C_S \left (\sum_{i=1}^{N}\nrm{u_i}_\mathcal{V}^2\right )^{1/2}\left (\sum_{i=1}^{N}\nrm{v_i}_\mathcal{V}^2\right )^{1/2}.
	\end{align*} 
	\end{itemize}

\LC{When $E''$ exists and is continuous, by the mean value theorem and standard Cauchy Schwarz inequality
	\[
\langle  E'(w_{i,j}+u_j)-E'(w_{i,j}) , v_i \rangle = \langle  E''(\xi_i) u_j , v_i \rangle \leq \| E''(\xi_i)\|\|u_j\|_{\mathcal V}\|v_i\|_{\mathcal V}.
	\]
Thus a naive verification of (SS2) could use the constant $C_S = L N$, which would be large if $N$ is large. When the inner product induced by the Hessian $E''(\xi_i)$ is spectrally equivalent to an ${\mathcal V}$-inner product, a better constant $C_S$, which is independent of $N$, can be obtained. This is reason we call it Strengthened Cauchy Schwarz inequality. 
}

We note that the constant $C_S>0$ \LC{can be} related to the Lipschitz constant in assumption (E2). Unless $E$ was quadratic, we could not assume in general that the Strengthened Cauchy Schwarz inequality would hold without restriction to the bounded sublevel set $\mathcal{B}$, as indicated in assumption (SS2).

	\section{Successive Subspace Optimization Methods} \label{sec:SSO}

For $k\geq 0$ and a given approximate solution $u^k\in \mathcal V$, one step of the Successive Subspace Optimization (SSO) method~\cite{Tai:2003convergence} is given in Algorithm~\ref{alg-sso}.

  \begin{algorithm}
\TitleOfAlgo{$u^{k+1} = {\rm SSO}(u^k)$}
$v_0 = u^k$
	\;
	\For{$i=1:N$}{
Define an energy section along $\mathcal{V}_i$:
	\[
J_i(w) := E(v_{i-1} + w), \quad \forall w \, \in\mathcal{V}_i; 
	\]
	\\
Compute the subspace correction: 
	\begin{equation}
e_i = \mathop{\rm argmin}_{w\in \mathcal V_i} J_i(w) ;
	\label{eqn:SSO-correction}
	\end{equation}
	\\
Apply the subspace correction:
	\[
v_i = v_{i-1} + e_i ;
	\]
}
$u^{k+1}=v_N$
	\;
	\medskip
\caption{Successive Subspace Optimization Method.}
	\label{alg-sso}
	\end{algorithm}
	
	\begin{remark}\rm 
Note that $e_i$ computed in \eqref{eqn:SSO-correction} of Algorithm~\ref{alg-sso} is uniquely defined, owing to the strong convexity inherited by the energy section $J_i$. In fact, the correction satisfies
	\[
\langle E'(v_i), w \rangle = \langle E'(v_{i-1} + e_i), w \rangle = \langle J'(e_i), w\rangle = 0, \quad \forall \ w\in \mathcal{V}_i.
	\]
The orthogonality relation satisfied by the corrected approximation, $v_i$, specifically,
	\[
\langle E'(v_i), w \rangle = 0, \quad \forall \ w\in \mathcal{V}_i,
	\]
\LC{is sometimes referred to as the \emph{fundamental orthogonality} (FO) of the solver.}
	\end{remark}

\LC{	
	\begin{remark}\rm 
We point out that, when $\mathcal V_i$ is one-dimensional, then the computation of the subspace correction is identical to a nonlinear Gauss-Seidel method. In fact, the SSO method can be considered as a generalization of the nonlinear Gauss-Seidel methodology.
	\end{remark}
}
We aim to prove a linear reduction of the energy difference for one iteration of the SSO algorithm:
	\begin{equation}
	\label{linearreduction}
E(u^{k+1}) - E(u) \leq \rho ( E(u^{k}) - E(u)),
	\end{equation}
where $u$ is the minimizer of $E$ and $u^{k+1} = {\rm SSO}(u^k)$, with a contraction factor $\rho \in (0,1)$. Ideally $\rho$ is independent of the size of the problem. The algorithm and convergence theory has been developed in~\cite{Tai:2003convergence,Tai;Xu:2001Global} for a convex energy in Banach spaces. For completeness, we include a simplified version for Hilbert space here.

\LC{
We will utilize the following simple result:
	\begin{theorem}
	\label{thm:linear-conv}
Suppose that $\{d_k\}_{k=0}^\infty$, $\{\delta_k\}_{k=0}^\infty$, $\{\eta_k\}_{k=0}^\infty$ are sequences of non-negative real numbers, the first two having the relationship 
	\[
\delta_k = d_k - d_{k+1}, \quad k = 0, 1, 2, \cdots .
	\]
Assume that there are constants $C_L,C_U >0$, independent of $k$, such that
	\[
 C_L \eta_k \le \delta_k  \quad \mbox{and} \quad d_{k+1} \le C_U \eta_k.
	\]
Then
	\begin{equation}
	\label{ineq-linear-conv}
d_{k+1} \le \frac{C_U}{C_L + C_U} d_k, \quad k = 0, 1, 2, \cdots.
	\end{equation}
Consequently $\{d_k\}$ converges monotonically, and (at least) linearly to 0.
	\end{theorem}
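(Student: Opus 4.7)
The plan is a short elementary manipulation of the two inequalities. First I would combine the hypothesis $C_L \eta_k \le \delta_k$ with the identity $\delta_k = d_k - d_{k+1}$ to eliminate $\eta_k$, obtaining $\eta_k \le (d_k - d_{k+1})/C_L$. Feeding this into the upper bound $d_{k+1} \le C_U \eta_k$ yields
\[
d_{k+1} \le \frac{C_U}{C_L}(d_k - d_{k+1}),
\]
and bringing the $d_{k+1}$ term to the left-hand side gives $(C_L + C_U)\,d_{k+1} \le C_U\,d_k$, which is exactly \eqref{ineq-linear-conv}.

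Since $C_U/(C_L + C_U) \in (0,1)$, the inequality \eqref{ineq-linear-conv} iterated from $0$ to $k$ immediately yields $d_{k+1} \le \bigl(C_U/(C_L+C_U)\bigr)^{k+1} d_0$, so $d_k \to 0$ at least linearly. Monotonicity of $\{d_k\}$ is even cheaper: the non-negativity of $\delta_k$ and the defining relation $\delta_k = d_k - d_{k+1}$ directly give $d_{k+1} \le d_k$.

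There is essentially no obstacle here; the result is purely algebraic and requires no limiting or convexity argument. The only thing to be slightly careful about is to note that the constants $C_L, C_U$ are assumed strictly positive (so that $C_L + C_U > 0$ and division is legal) and independent of $k$ (so that the contraction factor is uniform, which is what turns an inequality at each step into geometric decay). I would state the computation in two lines and close with the observation that the contraction factor lies strictly in $(0,1)$, from which monotone linear convergence to $0$ follows at once.
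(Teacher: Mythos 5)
Your argument is correct and is essentially the same as the paper's: both chain $d_{k+1}\le C_U\eta_k\le (C_U/C_L)\delta_k=(C_U/C_L)(d_k-d_{k+1})$ and rearrange, with monotone geometric decay following because the contraction factor lies in $(0,1)$. No differences worth noting.
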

	\begin{proof}
Observe that
	\[
d_{k+1} \le C_U \eta_k = \frac{C_U}{C_L} C_L \eta_k \le \frac{C_U}{C_L} \delta_k = \frac{C_U}{C_L}(d_k - d_{k+1}),
	\]
which implies \eqref{ineq-linear-conv}. Proving that $\{d_k\}$ is strictly decreasing to zero is straightforward, and the proof is omitted. 
	\end{proof}
}

We will apply the last result with the following definitions: 
	\begin{equation}
	\label{eqns:d_and_delta}
d_k := E(u^k) - E(u) \quad  \mbox{and} \quad  \delta_k := E(u^k) - E(u^{k+1}).
	\end{equation}
The quantity $d_k$ is the difference between the current energy and the minimum energy, also known as optimality gap, and $\delta_k$ is the energy decrease associated to the ${k+1}$-th iteration. They are connected, as desired, by the trivial identity 
	\[
\delta_k = d_k - d_{k+1}.
	\]
	See Fig.~\ref{fig:delta_k} for an illustration.
We define $\eta_k$ in terms of the subspace corrections via
	\[
\eta_k := \sum _{i=1}^N \|e_i\|^2_{\mathcal V},
	\] 
and we assume the following upper and lower bounds:

	\medskip
	
	\begin{figure}[!htp]
	\begin{center}
	\begin{tikzpicture}[>=latex,scale=1.6,domain=0.2:3.2]
    \draw[thick,->] (-0.5,0.0) -- (4.5,0.0) node[right] {$v$};
    \draw[dashed,thick] (-0.5,0.5) -- (4.5,0.5) node[right] {};
    \draw[dashed,thick] (2.0,1.0) -- (4.5,1.0) node[right] {};
    \draw[dashed,thick] (3.0,2.5) -- (4.5,2.5) node[right] {};
    \draw[thick,->] (0,-0.5) -- (0,3.0) node[above] {};
    \draw[thick]   plot (\x,{0.5*(\x-1.0)^2+0.5}) node[above] {$_{E(v)}$};
    \draw[dashed,blue,thick,<->] (2.0,0.5)--(2.0,1) node[right] { };
    \draw[dashed,blue,thick,<->] (3.0,0.5)--(3.0,2.5) node[right] { };
    \draw[dashed,blue,thick,<->] (4.0,1.0)--(4.0,2.5) node[right] { };
	\draw[fill=black] (1.0,0.0) node[below left] {$u$} circle (1pt);
	\draw[fill=black] (2.0,0.0) node[below left] {$u^{k+1}$} circle (1pt);
	\draw[fill=black] (3.0,0.0) node[below left] {$u^{k}$} circle (1pt);
	\draw[fill=black] (0.0,0.5) node[below left] {$E(u)$} circle (1pt);
	\draw[fill=black] (0.0,1.0) node[below left] {$E(u^{k+1})$} circle (1pt);
	\draw[fill=black] (0.0,2.5) node[below left] {$E(u^{k})$} circle (1pt);
	\node at (2.25,0.75) {$d_{k+1}$};
	\node at (3.2,1.50) {$d_{k}$};
	\node at (4.2,1.75) {$\delta_{k}$};
	\end{tikzpicture}
	\end{center}
\caption{\LC{The sequences $\{d_k\}$ and $\{\delta_k\}$.}}	
	\label{fig:delta_k}
	\end{figure}
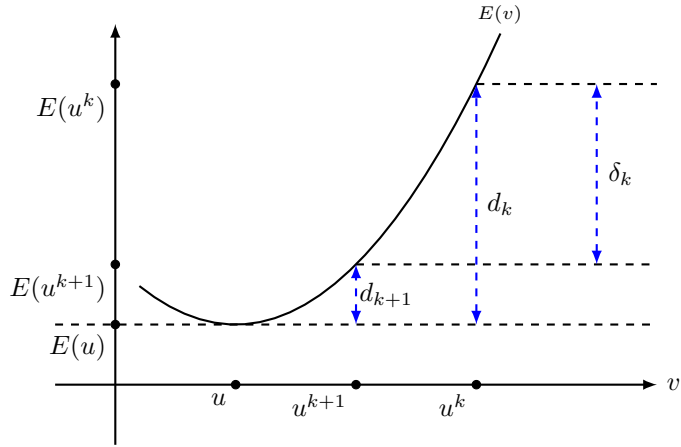

\noindent{\bf Lower Bound on Energy Decay.} There exists a positive constant $C_L$ such that for any $k=0,1,2, \cdots$
	\begin{equation}
	\label{ineq:lower-bound-corrections}	
E(u^k) - E(u^{k+1}) = \delta _k \geq  C_L\eta_k =C_L \sum _{i=1}^N \|e_i\|^2_{\mathcal V} .
	\end{equation}

\noindent{\bf Upper Bound on Optimality Gap.} There exists a positive constant $C_U$ such that for any $k=0,1,2, \cdots$
	\begin{equation}
	\label{ineq:upper-bound-corrections}	
E(u^{k+1}) - E(u) = d_{k+1} \leq C_U\eta_k = C_U\sum _{i=1}^N \|e_i\|^2_{\mathcal V} .
	\end{equation}

If these bounds hold, then, as a corollary to Theorem~\ref{thm:linear-conv}, we have

	\begin{corollary}
	\label{cor:framework}
Assume that the lower bound \eqref{ineq:lower-bound-corrections} and upper bound \eqref{ineq:upper-bound-corrections} hold with positive constants $C_L$ and $C_U$, respectively. We then have
	\[
E(u^{k+1}) - E(u)  \leq   \rho\left(E(u^k) - E(u)\right), \quad \rho: = \frac{C_U}{C_L + C_U},
	\]
and $E(u^k)$ converges monotonically, and (at least) linearly to $E(u)$, at the linear rate $\rho$. \LC{Furthermore, $u^k$ converges at least linearly to $u$.}
	\end{corollary}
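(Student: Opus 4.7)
The plan is to invoke Theorem~\ref{thm:linear-conv} in a pure pattern-matching manner with the identifications $d_k := E(u^k) - E(u)$, $\delta_k := E(u^k) - E(u^{k+1})$, and $\eta_k := \sum_{i=1}^N \|e_i\|_{\mathcal V}^2$ as already advertised in~\eqref{eqns:d_and_delta}. The relation $\delta_k = d_k - d_{k+1}$ holds by direct cancellation of $E(u)$; nonnegativity of $d_k$ is forced by $u$ being the global minimizer of $E$ (Theorem~\ref{thm:convex&coercive}); and nonnegativity of $\eta_k$ is automatic. The two required inequalities $C_L \eta_k \le \delta_k$ and $d_{k+1} \le C_U \eta_k$ are exactly the assumed bounds~\eqref{ineq:lower-bound-corrections} and~\eqref{ineq:upper-bound-corrections}. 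Theorem~\ref{thm:linear-conv} then delivers
\[
d_{k+1} \le \frac{C_U}{C_L + C_U}\, d_k = \rho\, d_k,
\]
together with the monotone (at least) linear convergence $d_k \to 0$, which is the energy-contraction portion of the claim.

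For the remaining assertion that $u^k \to u$ at least linearly in $\mathcal V$, I would appeal to the lower bound of the quadratic energy trap in Lemma~\ref{lm:energyatu-V}, which applies to every $w \in \mathcal V$ without any sublevel-set restriction. Applying it with $w = u^k$ gives
\[
\frac{\mu}{2}\|u^k - u\|_{\mathcal V}^2 \le E(u^k) - E(u) = d_k \le \rho^{k}\, d_0,
\]
and therefore
\[
\|u^k - u\|_{\mathcal V} \le \sqrt{\tfrac{2 d_0}{\mu}}\, \bigl(\sqrt{\rho}\bigr)^{k},
\]
which is at least linear convergence of the iterates in norm.

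There is no substantial obstacle here: the corollary is a clean specialization of the abstract Theorem~\ref{thm:linear-conv}, followed by transferring energy decay into norm decay through one half of the quadratic trap. The only points worth flagging are that the norm-convergence rate is $\sqrt{\rho}$ rather than $\rho$, and that the hypotheses implicitly guarantee the iterates remain in $\mathcal B$ (since $E(u^{k+1}) \le E(u^k) \le E(u_0)$ by the lower bound together with $\eta_k \ge 0$), although the universal validity of the lower bound in Lemma~\ref{lm:energyatu-V} means this containment is not strictly needed for the final norm estimate.
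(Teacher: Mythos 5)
Your proposal is correct and follows essentially the same route as the paper: apply Theorem~\ref{thm:linear-conv} with the identifications in \eqref{eqns:d_and_delta} to obtain the energy contraction, then use the lower bound \eqref{ine:energy-bound} with $w=u^k$ to convert energy decay into norm decay of the iterates. Your extra remarks about the $\sqrt{\rho}$ rate for the iterates and the unrestricted validity of the lower bound are accurate but not needed beyond what the paper records.
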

	
	\begin{proof}
The linear convergence of $E(u^k)$ to $E(u)$ at the rate $\rho$ is guaranteed by Theorem~\ref{thm:linear-conv}. \LC{Using \eqref{ine:energy-bound}, with $w=u^k$, we have
	\[
\frac{\mu}{2} \nrm{u^k-u}^2_{\mathcal{V}}  \le E(u^k)-E(u) ,
	\]
which guarantees the linear convergence of $u^k$ to $u$.}
	\end{proof}

Verifying the lower bound is relatively easy since $E$ is convex. Solving the convex optimization problem in each subspace will definitely decrease the energy, and this decrease can be quantified in terms of the norms of the corrections. We make essential use of the fundamental orthogonality property.

	\begin{theorem}
	\label{thm-lower-bound-SSO}
Let $u^k$ be the $k$-th iteration and $u^{k+1} = {\rm SSO}(u^{k})$. If $E$ is strongly convex in the sense of satisfying (E1), then
	\[
\delta_k = E(u^k) - E(u^{k+1}) \geq C_L \sum _{i=1}^N \|e_i\|^2_{\mathcal V}, \quad C_L:= \frac{\mu}{2}.
	\]
	\end{theorem}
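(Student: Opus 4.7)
The plan is to exploit two ingredients already at our disposal: (i) the fundamental orthogonality property of each subspace solve, which was noted in the remark following Algorithm~\ref{alg-sso}, and (ii) the strong convexity lower bound from Lemma~\ref{lm:energyatu-V}, which (critically) is valid without any restriction to $\mathcal B$. The strategy is to telescope the energy decrement across the $N$ sweep steps $v_0, v_1, \dots, v_N$ and bound each increment $E(v_{i-1}) - E(v_i)$ from below by $\tfrac{\mu}{2}\|e_i\|_{\mathcal V}^2$.

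First I would fix $i$ and apply the lower bound of inequality \eqref{ine:energy} with the choices $v = v_i$ and $w = v_{i-1}$, which gives
\[
E(v_{i-1}) - E(v_i) \geq \langle E'(v_i), v_{i-1} - v_i\rangle + \frac{\mu}{2}\|v_{i-1} - v_i\|_{\mathcal V}^2.
\]
Since $v_i - v_{i-1} = e_i \in \mathcal V_i$, the quadratic term is exactly $\tfrac{\mu}{2}\|e_i\|_{\mathcal V}^2$. For the linear term, the fundamental orthogonality satisfied by $v_i$ yields $\langle E'(v_i), w\rangle = 0$ for every $w \in \mathcal V_i$; taking $w = -e_i$ shows that $\langle E'(v_i), v_{i-1}-v_i\rangle = 0$. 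Therefore
\[
E(v_{i-1}) - E(v_i) \geq \frac{\mu}{2}\|e_i\|_{\mathcal V}^2.
\]

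Finally, I would telescope: since $v_0 = u^k$ and $v_N = u^{k+1}$,
\[
\delta_k = E(u^k) - E(u^{k+1}) = \sum_{i=1}^N \bigl(E(v_{i-1}) - E(v_i)\bigr) \geq \frac{\mu}{2}\sum_{i=1}^N \|e_i\|_{\mathcal V}^2,
\]
which is the desired conclusion with $C_L = \mu/2$.

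There is no real obstacle here; the only thing to be careful about is making sure we invoke the version of \eqref{ine:energy} whose lower bound holds globally in $\mathcal V$ (not just in $\mathcal B$), so that we do not have to verify that the intermediate iterates $v_i$ lie in the sublevel set $\mathcal B$. With that caveat respected, the argument is a clean two-line application of fundamental orthogonality plus strong convexity, followed by summation.
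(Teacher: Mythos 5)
Your proof is correct and follows essentially the same route as the paper: both arguments rest on the fundamental orthogonality $\langle E'(v_i), w\rangle = 0$ for $w\in\mathcal V_i$ from the exact subspace solve, the strong-convexity lower bound (which holds without restriction to $\mathcal B$), and telescoping over $i$. The only cosmetic difference is that the paper packages the per-step estimate through Lemma~\ref{lm:energyatu-variant} applied to the energy section $J_i$, whereas you inline the same computation directly from the lower bound of \eqref{ine:energy}.
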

	\begin{proof}
Recalling Lemma~\ref{lm:energyatu-variant}, we observe that $J_i$ (defined in Algorithm~\ref{alg-sso}) is strictly convex over $\mathcal{V}_i$ and is Fr\'{e}chet differentiable, as it inherits the structure of $E$. It follows that
	\[
\langle J_i'(e_i),w\rangle = 0, \quad \forall \, w\in \mathcal{V}_i.
	\]
But the object on the left-hand-side is simply a directional derivative of the full energy, and it is easy to see that
	\[
\langle J_i'(e_i),w\rangle = \langle E'(v_{i-1}+e_i),w \rangle = \langle E'(v_i),w \rangle \quad  \forall \,  w \in \mathcal{V}_i.
	\]
Therefore, the fundamental orthogonality, $E'(v_i) = 0$ in $\mathcal V_i'$, holds. As $e_i = v_i-v_{i-1} \in \mathcal V_i$,  in view of Lemma \ref{lm:energyatu-variant}, we have 
	\begin{equation}
	\label{localconvexity}
E(v_{i-1}) - E(v_i) = J_i(0) - J_i(e_i) \ge \frac{\mu}{2}\nrm{e_i}_{\mathcal{V}}^2 .
	\end{equation}
The sum of the left-hand-side telescopes, and we have
	\[
E(u^k) - E(u^{k+1}) = \sum _{i=1}^N\left(  E(v_{i-1}) - E(v_i) \right)\geq \frac{\mu}{2} \sum _{i=1}^N \|e_i\|^2_{\mathcal V}.
	\]
	\end{proof}

	\begin{remark}
	\rm
In view of \eqref{localconvexity}, the convexity of $E$ can be relaxed to the local convexity of the energy sections $J_i$ in each subspace $\mathcal V_i$. Namely we may have a non-convex energy $E$ which, restricted to each subspace, is convex and the lower bound still holds. For example, the energy used in the Optimal Delaunay Triangulation (ODT)~\cite{Chen.L;Xu.J2004} is non-convex globally. But restricted to one vertex, it is convex, and the corresponding 1-D optimization problem has a closed form solution, which is known as ODT mesh smoothing~\cite{Chen.L2004}. Theorem \ref{thm-lower-bound-SSO} guarantees the energy decreasing property of ODT mesh smoothing.
	\end{remark}

The upper bound is more delicate and relies on the assumptions about the decomposition of spaces.  The result is given in the following theorem.
	\begin{theorem} 
	\label{thm:upper-bound-SSO-new}
Let $u^{k+1}$ be the $k+1^{\rm st}$ iteration in the {\rm SSO} algorithm. Suppose that the space decomposition satisfies assumptions (SS1) and (SS2) and the energy $E$ satisfies assumption (E1), then we have
	\begin{equation*}
d_{k+1} = E(u^{k+1}) - E(u) \leq C_U\sum _{i=1}^N\|e_i\|_{\mathcal V}^2, \quad  C_U := \frac{C_S^2C_A^2}{2 \mu}.
	\end{equation*}
	\end{theorem}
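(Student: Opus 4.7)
The plan is to reduce the optimality gap to a dual-norm estimate on the gradient at $u^{k+1}$ via Lemma~\ref{lm:dE-energy-V}, which yields
$$
E(u^{k+1})-E(u)\le \frac{1}{2\mu}\|E'(u^{k+1})\|_{\mathcal V'}^2,
$$
so it suffices to show $\|E'(u^{k+1})\|_{\mathcal V'}\le C_SC_A\bigl(\sum_i\|e_i\|_{\mathcal V}^2\bigr)^{1/2}$. The whole argument then amounts to rewriting $E'(u^{k+1})=E'(v_N)$ in a form to which (SS1) and (SS2) can be applied.

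To do this, I pick an arbitrary $v\in \mathcal V$ and use the stable decomposition (SS1) to write $v=\sum_{i=1}^N v_i$ with $v_i\in\mathcal V_i$ and $\sum_i\|v_i\|_{\mathcal V}^2\le C_A^2\|v\|_{\mathcal V}^2$. Then I exploit the fundamental orthogonality $\langle E'(v_i),w\rangle=0$ for all $w\in\mathcal V_i$ (noted in the remark following Algorithm~\ref{alg-sso}) to insert a zero and telescope along the sweep:
$$
\langle E'(v_N),v_i\rangle=\langle E'(v_N)-E'(v_i),v_i\rangle=\sum_{j=i+1}^N\langle E'(v_{j-1}+e_j)-E'(v_{j-1}),v_i\rangle.
$$
Summing over $i$ and applying (SS2) with $w_{i,j}:=v_{j-1}$ and $u_j:=e_j$, followed by (SS1) on the $v_i$ factor, yields
$$
\langle E'(u^{k+1}),v\rangle\le C_S\Big(\sum_i\|e_i\|_{\mathcal V}^2\Big)^{1/2}\Big(\sum_i\|v_i\|_{\mathcal V}^2\Big)^{1/2}\le C_SC_A\Big(\sum_i\|e_i\|_{\mathcal V}^2\Big)^{1/2}\|v\|_{\mathcal V}.
$$
Taking the supremum over unit $v\in\mathcal V$, squaring, and inserting the resulting bound into the dual-norm estimate for $E(u^{k+1})-E(u)$ gives the asserted inequality with $C_U=C_S^2C_A^2/(2\mu)$.

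The only non-automatic step is verifying that the hypothesis of (SS2) is actually satisfied, i.e.\ that each $w_{i,j}=v_{j-1}$ and each $w_{i,j}+e_j=v_j$ lies in the sublevel set $\mathcal B$. This is where the energy-monotonicity already established in Theorem~\ref{thm-lower-bound-SSO} comes in: under the inductive assumption $u^k\in\mathcal B$, the local minimizations in the sweep give $E(v_0)\ge E(v_1)\ge\cdots\ge E(v_N)$, so every intermediate iterate remains in $\mathcal B$ and (SS2) applies as written. I would point this out briefly before invoking (SS2); the remainder of the proof is just the chain of inequalities above.
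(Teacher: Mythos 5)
Your proposal is correct and follows essentially the same route as the paper: reduce to the dual-norm bound of Lemma~\ref{lm:dE-energy-V}, use the fundamental orthogonality to insert $E'(v_i)$, telescope the difference $E'(v_N)-E'(v_i)$ along the sweep, and then apply (SS2) and (SS1), with the same observation that energy monotonicity keeps every intermediate iterate in $\mathcal B$ so that (SS2) is applicable. The only cosmetic issue is the notational clash between the decomposition components $v_i$ and the algorithm's iterates $v_i$ (the paper avoids this by writing the decomposition as $w=\sum_i w_i$), but the argument itself is the paper's.
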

	\begin{proof}
Using Lemma~\ref{lm:dE-energy-V}, with the choice $v = u^{k+1}$ in~\eqref{ine:energy-dE-V}, we have
	\begin{equation*}
d_{k+1} = E(u^{k+1}) - E(u)  \leq  \frac{1}{2 \mu} \| E'(u^{k+1}) \|^2_{\mathcal{V}'}.
	\end{equation*}
For any $w \in \mathcal{V}$, we choose a stable decomposition $w = \sum_{i=1}^N w_i$, then
	\begin{align*}
\langle E'(u^{k+1}), w \rangle &= \sum_{i=1}^N \langle E'(u^{k+1}), w_i \rangle 
	\\
& = \sum_{i=1}^N \langle E'(u^{k+1}) - E'(v_i), w_i \rangle 
	\\
& = \sum_{i=1}^N\sum_{j = i+1}^{N} \langle E'(v_j) - E'(v_{j-1}), w_i \rangle 
	\\
& \leq C_S \left( \sum_{i=1}^N \| e_i \|^2_{\mathcal{V}} \right)^{1/2} \left( \sum_{i=1}^N \| w_j \|^2_{\mathcal{V}}  \right)^{1/2}  
	\\
& \leq C_SC_A \left( \sum_{i=1}^N \| e_i \|^2_{\mathcal{V}} \right)^{1/2} \| w \|_{\mathcal{V}}.
	\end{align*}
Here we use the fact that we solve the minimization problem on each subspace exactly and the energy decreases, therefore, $v_j \in \mathcal{B}$ for all $j$ and $E'(v_i) = 0$ in $\mathcal V_i'$.  Then we have
	\begin{align*}
E(u^{k+1}) - E(u) & \leq \frac{1}{2\mu}  \| E'(u^{k+1}) \|^2_{\mathcal{V}'}  
	\\
& = \frac{1}{2\mu} \left( \sup_{w \in \mathcal{V}\setminus\{0\}} \frac{\langle E'(u^{k+1}), w \rangle}{\| w \|_{\mathcal{V}}}  \right)^2
	\\
& \leq \frac{1}{2\mu} C^2_S C_A^2   \sum_{i=1}^N \| e_i \|^2_{\mathcal{V}},
	\end{align*}
which finishes the proof.
\end{proof}

Based on the lower bound given in Theorem~\ref{thm-lower-bound-SSO} and the upper bound given in Theorem~\ref{thm:upper-bound-SSO-new}, we can conclude the convergence of SSO.  Comparing with the results in \cite{Tai;Xu:2001Global}, we use slightly weaker assumptions, and the constant $C_U$ seems to be slightly better. 

	\begin{corollary}
Let $u^k$ be the $k$-th iteration and $u^{k+1} = {\rm SSO}(u^{k})$. Suppose that the space decomposition satisfies assumptions (SS1) and (SS2) and the energy $E$ satisfies assumption (E1), then we have
	\[
E(u^{k+1}) - E(u) \leq \rho ( E(u^{k}) - E(u)), \quad \mbox{with} \quad  \rho = \frac{C_S^2C_A^2}{C_S^2C_A^2 + \mu^2}.
	\]
	\end{corollary}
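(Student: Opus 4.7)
The plan is to simply stitch together the two bounds that have just been established (Theorem~\ref{thm-lower-bound-SSO} and Theorem~\ref{thm:upper-bound-SSO-new}) and feed them into the abstract convergence machinery in Corollary~\ref{cor:framework} (which itself rests on Theorem~\ref{thm:linear-conv}). No new estimate is needed, so the work is essentially arithmetic.

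First, I would set $d_k := E(u^k)-E(u)$, $\delta_k := E(u^k)-E(u^{k+1})$, and $\eta_k := \sum_{i=1}^N \|e_i\|_{\mathcal V}^2$, where the $e_i$ are the subspace corrections generated during the step $u^{k+1}={\rm SSO}(u^k)$. Theorem~\ref{thm-lower-bound-SSO} gives the lower bound
\[
\delta_k \geq C_L\,\eta_k, \qquad C_L=\tfrac{\mu}{2},
\]
which only requires the strong convexity assumption (E1). Theorem~\ref{thm:upper-bound-SSO-new}, which uses (E1), (SS1), and (SS2), gives the matching upper bound
\[
d_{k+1}\le C_U\,\eta_k,\qquad C_U=\frac{C_S^2C_A^2}{2\mu}.
\]

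Next, I would invoke Corollary~\ref{cor:framework} (equivalently Theorem~\ref{thm:linear-conv}) with these explicit constants. That yields
\[
d_{k+1}\le \frac{C_U}{C_L+C_U}\,d_k.
\]
Substituting and simplifying,
\[
\rho = \frac{C_U}{C_L+C_U} = \frac{\tfrac{C_S^2C_A^2}{2\mu}}{\tfrac{\mu}{2}+\tfrac{C_S^2C_A^2}{2\mu}} = \frac{C_S^2C_A^2}{\mu^2+C_S^2C_A^2},
\]
which is precisely the stated contraction factor. Since $C_S,C_A,\mu>0$, we also have $\rho\in(0,1)$, so the sequence $\{E(u^k)-E(u)\}$ decreases monotonically and at least linearly to zero.

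There is no genuine obstacle here: the real analytic work was done inside Theorems~\ref{thm-lower-bound-SSO} and~\ref{thm:upper-bound-SSO-new}, so this corollary is just a clean bookkeeping step. The only thing to watch is that the hypotheses of both theorems and of Corollary~\ref{cor:framework} are indeed satisfied under (E1), (SS1), (SS2); in particular, one should note that strong convexity alone (without needing (E2)) is enough, because the energy section bound used in the lower bound only relies on the lower estimate of Lemma~\ref{lm:energyatu-variant}, and Lemma~\ref{lm:dE-energy-V} used in the upper bound likewise requires only (E1).
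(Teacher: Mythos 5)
Your proposal is correct and follows exactly the route the paper intends: the corollary is obtained by feeding the lower bound of Theorem~\ref{thm-lower-bound-SSO} ($C_L=\mu/2$) and the upper bound of Theorem~\ref{thm:upper-bound-SSO-new} ($C_U=C_S^2C_A^2/(2\mu)$) into Corollary~\ref{cor:framework}, and your arithmetic for $\rho=C_U/(C_L+C_U)$ matches the stated rate. Your closing remark that (E2) is not needed explicitly is also consistent with the paper, which notes that the Lipschitz constant is implicitly absorbed into $C_S$ via assumption (SS2).
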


As we have pointed out previously, the Lipschitz continuity and constant $L$ are implicitly contained in assumption (SS2) (the Strengthened Cauchy Schwarz inequality) and the constant $C_S$.  We will show how this can be so with an application at the end of the paper. 
	
 
	\section{Fast Subspace Descent (FASD) Method: FAS with Exact Line Search} 
	\label{sec:FASD}
	
In this section, we present the theory for the convergence of the Fast Subspace Descent (FASD) method listed in Algorithm~\ref{alg:FASD}. To recap, in the SSO method, Algorithm~\ref{alg-sso}, we need to solve the optimization problem 
	\[
\min_{w\in \mathcal V_i}E(v_{i-1} + w)
	\]
in each subspace exactly, which requires evaluation of the global energy $E$ and its derivative $E'$ in the  space $\mathcal V$.  Although the size of the optimization problem is reduced to $\dim \mathcal V_i$, such evaluations are still in the original space of size $\dim \mathcal{V}$, which may be expensive.

	\subsection{Algorithm Definition}
Denote by $I_i: \mathcal V_i \hookrightarrow \mathcal V$ the natural inclusion and $R_i = I_i^{\top}: \mathcal V' \to \mathcal V_i'$ the natural restriction of functionals. Thus, for all $w\in\mathcal{V}_i$,
	\[
\langle R_i E'(v_{i-1}),w \rangle = 	\langle  E'(v_{i-1}), R_i^{\top} w \rangle = 	\langle  E'(v_{i-1}), I_i w \rangle. 
	\]
\LC{Often times we just drop $R_i$ and $I_i$, as their actions can be assumed implicitly.} We need to evaluate the gradient $R_iE'(v_{i-1} + I_i w)$, as well as \LC{the Hessian} $R_iE''(v_{i-1} + I_i w)I_i$ and its inverse, if Newton's method is used, several times. This is practical only if the natural inclusion $I_i$ is efficient to realize, e.g., a one-dimensional subspace generated by one basis function of $\mathcal V$ and the resulting method is the so-called non-linear Gauss-Seidel iteration.

\LC{Instead of solving the minimization problem using the original energy $E$, in our FASD Algorithm (Algorithm~ \ref{alg:FASD}) we utilize a locally-defined energy $E_i$ in each subspace $\mathcal V_i$ and solve a perturbed optimization problem. For the moment, let us assume that $E_i:\mathcal{V}_i\to \mathbb{R}$ is Fr\'{e}chet differentiable in $\mathcal{V}_i$. We will give further assumptions shortly.} In addition to prolongation and restriction operators, we also need a projection operator $Q_i: \mathcal V \to \mathcal V_i$. Ideally, $Q_i v$ yields a good approximation of $v$ in the subspace $\mathcal{V}_i$. Recall that as a projection operator $Q_i v_i = v_i$ for $v_i\in \mathcal V_i$.

	\begin{algorithm}
	\TitleOfAlgo{ $u^{k+1} = {\rm FASD}(u^k)$}
$v_0 = u^k$
	\;
	\For{$i = 1 : N$}{
Compute the  so-called subspace \emph{$\tau$-perturbation}: let $\xi_i = Q_iv_{i-1}$ and 
	\begin{equation}
 \tau_i  := E'_{i}(\xi_i) - R_iE'(v_{i-1}) \in\mathcal{V}_i' ;
	\end{equation}
	\\
Solve the subspace residual problem: Find $\eta_i\in \mathcal{V}_i$, such that
	\begin{equation}
	\label{eq:residual}
\langle E'_{i}( \eta_i ), w \rangle =  \langle \tau_i, w \rangle, \quad \forall \, w \in \mathcal{V}_i ;
	\end{equation}
	\\
Compute the search direction:
	\begin{equation}
s_i := \eta_i- \xi_i \in \mathcal{V}_i ;
	\end{equation}
	\\
Orthogonalize the subspace correction via the exact line search:
	\begin{equation} \label{def:line-search}
\varepsilon_i := \alpha_i^* s_i , 
	\end{equation}
where 
	\begin{equation}\label{exactline}
\alpha_i^* = \mathop{\rm argmin}_{\alpha\in\mathbb{R}} E(v_{i-1} +\alpha s_{i}); 
	\end{equation}
	\\
Apply the subspace correction:
	\begin{equation} 
v_{i} := v_{i-1} + \varepsilon_i ;
	\end{equation}
	}
$u^{k+1} := v_{N}$
	\;
	\medskip
\caption{Fast Subspace Descent (FASD) Method.}
	\label{alg:FASD}
	\end{algorithm}


We shall view our Fast Subspace Descent (FASD) method as a hybrid of the SSO and FAS methods. For the proof of convergence, it helps to treat FASD as an SSO iteration with an inexact local solver.  We could also say that FASD is essentially FAS with an additional line search step. 

Our FASD algorithm is listed in Algorithm~\ref{alg:FASD}. In the orthogonalization step, cf. \eqref{def:line-search}, we perform a line search to find the optimal step size which still requires the evaluation of some of the ``fine level" functions $E(v_{i-1} + \alpha s_i)$, $E'(v_{i-1} + \alpha s_i)$, and $E''(v_{i-1} + \alpha s_i)$ in $\mathcal V$. The computational cost is reduced compared with evaluation \LC{of} $v_{i-1} + w$ for multiple $w\in \mathcal V_i$. Algorithm~\ref{alg:FASD} is an intermediate step towards the convergences proof of original FAS. In Section~\ref{sec:FASD-approx-line}, we shall  analyze an algorithm that uses a simpler choice of step size, one that is closer to the original FAS method. \LC{In  Section~\ref{sec:Original-FAS}, we shall consider the original FAS, which corresponds to FASD with the step size $\alpha_i = 1$.} 

	\subsection{Strong Convexity of $E_i$ and Well-Posedness}
To show the well-posedness of the local problem \eqref{eq:residual}, and therefore Algorithm~\ref{alg:FASD}, we need some assumptions on the energies $E_i$. As mentioned, we assume $E_i:\mathcal{V}_i\to \mathbb{R}$ is Fr\'{e}chet differentiable for all points $v\in \mathcal V_i$. In addition, we introduce the following assumptions on the local energy, $E_i$, which is just the local version of (E1):  
	\begin{itemize}

	\item [(E3)] (Strong convexity/Ellipticity:) 
There exists a constant $\mu_i$ such that for all $v, w \in \mathcal{V}_i$
	\[
\langle E_i'(w) - E_i'(v), w - v \rangle \geq \mu_i \| w - v \|^2_{\mathcal{V}}.
	\]	
	\end{itemize}

\LC{
For the local optimization problem, expressed in equation~\eqref{eq:residual}, we are not minimizing an approximated energy $E_i$, i.e., not solving $E'_{i}(Q_iv_{i-1} + s_{i}) = 0$. Instead a so-called $\tau$-perturbation is added to the right hand side. Still, this optimization problem is uniquely solvable.
	\begin{lemma}
	\label{lm:residual}
Assume $E_i$ satisfies the strong convexity assumption (E3). Then there exists a unique solution to the residual equation \eqref{eq:residual}.
	\end{lemma}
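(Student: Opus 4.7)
The plan is to reduce the problem to an instance of Theorem~\ref{thm:convex&coercive} applied on the closed subspace $\mathcal{V}_i$, which is itself a Hilbert space under the inherited inner product. Concretely, I would introduce the auxiliary functional
\[
F_i(w) := E_i(w) - \langle \tau_i, w \rangle, \qquad w\in\mathcal{V}_i,
\]
and observe that $\eta_i$ solves \eqref{eq:residual} if and only if $F_i'(\eta_i)=0$ in $\mathcal{V}_i'$, which (by Theorem~\ref{thm:convex&coercive} applied to $F_i$) is equivalent to $\eta_i$ being the unique global minimizer of $F_i$ over $\mathcal{V}_i$.

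First I would verify that $F_i$ is Fr\'echet differentiable on $\mathcal{V}_i$ with $F_i'(w) = E_i'(w) - \tau_i$, which is immediate since $\tau_i\in\mathcal{V}_i'$ is a fixed bounded linear functional. Next, I would show that $F_i$ inherits the strong convexity assumption (E1) from the hypothesis (E3) on $E_i$: for all $v,w\in\mathcal{V}_i$,
\[
\langle F_i'(w)-F_i'(v), w-v\rangle = \langle E_i'(w)-E_i'(v), w-v\rangle \ge \mu_i \|w-v\|_{\mathcal{V}}^2,
\]
since the $\tau_i$ term cancels in the difference. Thus $F_i$ satisfies (E1) on the Hilbert space $\mathcal{V}_i$ with constant $\mu_i>0$.

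Applying Theorem~\ref{thm:convex&coercive} to $F_i$ on $\mathcal{V}_i$ then yields a unique global minimizer $\eta_i\in\mathcal{V}_i$ satisfying the Euler equation
\[
\langle F_i'(\eta_i), w\rangle = \langle E_i'(\eta_i) - \tau_i, w\rangle = 0, \qquad \forall\, w\in\mathcal{V}_i,
\]
which is exactly the residual equation \eqref{eq:residual}. Uniqueness follows from the strict convexity part of the same theorem.

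There is essentially no obstacle here; the proof is a direct invocation of previously established machinery. The only conceptual point worth stating explicitly is that a closed subspace of a Hilbert space is itself a Hilbert space, so the abstract existence/uniqueness framework of Section~\ref{sec:problem-assumption} applies verbatim with $\mathcal{V}$ replaced by $\mathcal{V}_i$ and $E$ replaced by the shifted local energy $F_i$.
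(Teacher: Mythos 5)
Your proof is correct and follows essentially the same route as the paper: both reduce \eqref{eq:residual} to the Euler equation of the shifted functional $E_i(\cdot)-\langle\tau_i,\cdot\rangle$ on $\mathcal{V}_i$ and invoke the abstract existence/uniqueness result for strongly convex functionals, noting that the linear perturbation does not affect convexity. Your version is merely a bit more explicit about verifying that (E1) transfers to the shifted functional on the subspace.
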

	\begin{proof}
The residual equation \eqref{eq:residual} is the Euler equation for the minimization problem
	\begin{equation}
	\label{eq:localopt}
 \min_{v\in \mathcal V_i} \left(E_i(v) - \langle \tau_i, v\rangle\right).
	\end{equation}
As $E_i$ is strictly convex, and since the linear shift $\langle \tau_i, v\rangle$ will not affect the convexity, the global minimizer of \eqref{eq:localopt} exists, is unique, and satisfies the Euler equation~\eqref{eq:residual}. Detailed proofs can be found in~\cite[p. 35]{Ekeland;Temam:1976Convex}, \cite[Thm.~8.2-2]{Ciarlet89}, or \cite[Thm.~3.3.13]{Atkinson09}.
	\end{proof}
}
\LC{
	\begin{remark}\label{rm:FASD-SSO}\rm
We note that Algorithm~\ref{alg:FASD} (FASD) generalizes Algorithm~\ref{alg-sso} 	(SSO).  They yield the same approximations  in the case that 
	\[
E_i(\eta)  := E(v_{i-1}-Q_i v_{i-1} +\eta) , \quad \forall \ \eta \in \mathcal{V}_i.
	\]
The projection $Q_i$ just needs to satisfy the usual property $Q_i\eta = \eta$, for all $\eta\in\mathcal{V}_i$. As a consequence of this choice, $\tau_i \equiv 0$ and, for all $w\in\mathcal{V}_i$,
	\[
\langle E'(v_{i-1}+s_i), w \rangle = \langle E'(v_{i-1}-Q_iv_{i-1}+\eta_i), w \rangle = \langle E_i'(\eta_i), w \rangle = 0	.
	\]
With these choices in FASD, the last step (orthogonalization) is redundant because
	\[
\langle E'(v_{i-1}+s_i), s_i \rangle = 0
	\]
upon taking $w = s_i$. In other words, the orthogonality is valid with $\alpha_i^* = 1$ for SSO.
	\end{remark}
}

	\subsection{Lower bound}
\LC{The first correction that we obtain in Algorithm~\ref{alg:FASD}, namely, $s_i = \eta_i -\xi_i$, where $\xi_i=Q_iv_{i-1}$ is the full approximation, is used as the search direction for a line optimization. The line optimization confers an orthogonalization property to the corrected approximation $v_i$. Due to this orthogonalization and the convexity of $E$, the proof of the lower bound for FASD is almost exactly the same as that for the SSO method.} 

	\begin{theorem}
	\label{thm-lower-bound-FASD}
Suppose that $E$ satisfies (E1) and $E_i$ satisfies (E3), and let $u^k$ be the $k^{\rm th}$ iteration in the FASD algorithm (Algorithm~\ref{alg:FASD}). Then
	\[
 E(u^k) - E(u^{k+1}) \geq \frac{\mu}{2} \sum _{i=1}^N \|\varepsilon_i\|^2_{\mathcal V}.
	\]
	\end{theorem}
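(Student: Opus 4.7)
The plan is to mimic the argument of Theorem~\ref{thm-lower-bound-SSO}, but restricted to the one-dimensional subspace spanned by the search direction $s_i$, since the orthogonalization step of FASD performs an exact line search exactly along this single direction rather than solving a full subspace optimization in $\mathcal{V}_i$.

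First I would observe that each partial iterate $v_{i-1}$ lies in $\mathcal{B}$: by definition of $\alpha_i^*$ via \eqref{exactline}, we have $E(v_i) = E(v_{i-1} + \alpha_i^* s_i) \le E(v_{i-1} + 0 \cdot s_i) = E(v_{i-1})$, so an easy induction starting from $u^k \in \mathcal{B}$ shows $v_i \in \mathcal{B}$ for every $i$. This ensures that we may apply Lemma~\ref{lm:energyatu-variant} with base point $\xi = v_{i-1}$.

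Next I would specialize Lemma~\ref{lm:energyatu-variant} to the closed one-dimensional subspace $\mathcal{W}_i := \operatorname{span}(s_i) \subseteq \mathcal{V}_i \subseteq \mathcal{V}$. The resulting section $J(w) := E(v_{i-1} + w)$ for $w \in \mathcal{W}_i$ is a strongly convex function on $\mathcal{W}_i$. Its unique minimizer is, by construction, exactly $\varepsilon_i = \alpha_i^* s_i$, since \eqref{exactline} is nothing but the one-dimensional optimization $\min_{w \in \mathcal{W}_i} J(w)$. Invoking the lower bound from Lemma~\ref{lm:energyatu-variant} with $w = 0 \in \mathcal{W}_i$ and minimizer $\eta = \varepsilon_i$, we obtain
\[
E(v_{i-1}) - E(v_i) = J(0) - J(\varepsilon_i) \ge \frac{\mu}{2} \|\varepsilon_i\|_{\mathcal{V}}^2.
\]

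Finally I would sum this estimate from $i = 1$ to $N$. The left-hand side telescopes to $E(v_0) - E(v_N) = E(u^k) - E(u^{k+1})$, yielding the desired bound. Because we only use the lower half of Lemma~\ref{lm:energyatu-variant}, which holds unconditionally on $\mathcal{W}_i$, no assumption on $E_i$ actually enters the estimate for the lower bound — (E3) is only needed for well-posedness of the subspace residual problem \eqref{eq:residual} producing $s_i$. The one subtlety to watch is that the proof does not require $s_i \ne 0$; if $s_i = 0$ then $\varepsilon_i = 0$ and both sides of the per-level inequality vanish, so there is no obstacle there. Overall the main step is the reinterpretation of the exact line search as an exact one-dimensional SSO step, after which the result reduces to a direct application of the quadratic energy trap.
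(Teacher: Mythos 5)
Your proof is correct and follows essentially the same route as the paper: both arguments apply Lemma~\ref{lm:energyatu-variant} to the one-dimensional subspace $\operatorname{span}\{s_i\}$, identify $\varepsilon_i=\alpha_i^*s_i$ as the minimizer of the corresponding energy section (equivalently, via the orthogonality conferred by the exact line search), and telescope the per-step bound $E(v_{i-1})-E(v_i)\ge\frac{\mu}{2}\|\varepsilon_i\|_{\mathcal V}^2$. Your explicit verification that the iterates remain in $\mathcal B$ and your remark that (E3) enters only through the well-posedness of the residual problem are both accurate refinements of the paper's presentation.
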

	\begin{proof}
We apply a similar technique as in the proof of Theorem~\ref{thm-lower-bound-SSO}. Due to the line search, we still have an orthogonality property that can be utilized, namely,
	\[
\langle E'(v_{i}), w \rangle = 0, \quad w \in {\rm span} \{ s_i\}.
	\]
Then, applying Lemma \ref{lm:energyatu-variant}, with the subspace  $\mathcal{W} = {\rm span} \{ s_i\}$, and noting that 
	\[
v_i - v_{i-1} = \varepsilon_i = \alpha_i^* s_i \in {\rm span}\{ s_i\},
	\]
we have 
	\[
E(v_{i-1}) - E(v_i) \geq \frac{\mu}{2}\|v_{i-1}-v_i\|^2_{\mathcal V} = \frac{\mu}{2}\|\varepsilon_i\|^2_{\mathcal V},
	\]
and consequently
	\[
E(u^k) - E(u^{k+1}) = \sum _{i=1}^N\left( E(v_{i-1}) - E(v_i)\right) \geq \frac{\mu}{2} \sum _{i=1}^N \|\varepsilon_i\|^2_{\mathcal V}.
	\]
	\end{proof}
	
We will later need the following simple result, which follows because of the strong convexity assumption (E3).
	\begin{lemma}
	\label{lm:si}
Let $s_i$ be computed as in Algorithm~\ref{alg:FASD} and suppose that $E_i$ satisfies assumption (E3). Then $s_i$ is a descent direction in the sense that
 	\[
 \langle - E'(v_{i-1}), s_i \rangle \geq \mu_i \|s_i\|_{\mathcal V}^2 > 0.
 	\]
	\end{lemma}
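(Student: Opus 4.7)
The plan is to extract the inequality directly from the residual equation~\eqref{eq:residual} that defines $\eta_i$, combined with the strong convexity assumption (E3). Since $s_i = \eta_i - \xi_i$ is the difference of two elements in $\mathcal{V}_i$ evaluated by the same locally-strongly-convex functional $E_i$, the monotonicity of $E'_i$ should give exactly the desired bound.

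First I would rewrite the defining relations. By the definition of the $\tau$-perturbation, $\tau_i = E'_i(\xi_i) - R_i E'(v_{i-1})$, and by the residual equation, $E'_i(\eta_i) = \tau_i$ in $\mathcal{V}_i'$. Combining these gives the key identity
\[
R_i E'(v_{i-1}) = E'_i(\xi_i) - E'_i(\eta_i) \quad \text{in } \mathcal{V}_i'.
\]
Next I would test this identity against $s_i \in \mathcal{V}_i$. Because $s_i \in \mathcal{V}_i$, the restriction can be dropped (the inclusion $I_i$ acts implicitly), so
\[
\langle -E'(v_{i-1}), s_i \rangle = \langle -R_i E'(v_{i-1}), s_i \rangle = \langle E'_i(\eta_i) - E'_i(\xi_i), \eta_i - \xi_i \rangle.
\]
Now I would apply assumption (E3) to the right-hand side with $w = \eta_i$ and $v = \xi_i$ to conclude
\[
\langle E'_i(\eta_i) - E'_i(\xi_i), \eta_i - \xi_i \rangle \geq \mu_i \|\eta_i - \xi_i\|^2_{\mathcal{V}} = \mu_i \|s_i\|^2_{\mathcal{V}},
\]
which produces the stated inequality. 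The strict positivity $\mu_i \|s_i\|^2_{\mathcal{V}} > 0$ is to be understood in the case $s_i \neq 0$; otherwise the claim is the trivial $0 \geq 0$, and the iteration has already achieved local stationarity at the current step.

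I do not foresee any real obstacle here, as the argument is essentially a one-line consequence of the definitions and (E3). The only bookkeeping care needed is to keep the pairing between $\mathcal{V}_i'$ and $\mathcal{V}_i$ consistent with the global duality pairing via the restriction/inclusion pair $(R_i, I_i)$, and this is handled by the observation that $\langle R_i f, w\rangle = \langle f, I_i w\rangle$ for any $f \in \mathcal{V}'$ and $w \in \mathcal{V}_i$, which is exactly the convention introduced just before the algorithm.
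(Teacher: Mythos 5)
Your proposal is correct and follows essentially the same route as the paper: rewrite the residual equation as $\langle E_i'(\eta_i)-E_i'(\xi_i),w\rangle = -\langle R_iE'(v_{i-1}),w\rangle$, test with $w=s_i$, and invoke (E3). Your explicit remark that strict positivity requires $s_i\neq 0$ is a small point of extra care not spelled out in the paper, but otherwise the arguments coincide.
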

\begin{proof}
The local problem \eqref{eq:residual} can be rewritten as follows: find $\eta_i\in \mathcal V_i$ s.t. 
	\begin{equation}
	\label{localproblem}
\langle E_{i}'(\eta_i) - E_{i}'(\xi_i), w \rangle = -\langle R_iE'(v_{i-1}), w \rangle, \quad \forall \ w \in \mathcal V_i.
	\end{equation}
Here recall that $\xi_i = Q_iv_{i-1} \in \mathcal{V}_i$ and $\eta_i = \xi_i + s_i$. Choosing $w = s_i$ and using the strong convexity of $E_i$, we obtain the inequality
	\[
 \langle -R_iE'(v_{i-1}), s_i \rangle = \langle E_{i}'(\eta_i) - E_{i}'(\xi_i), s_i \rangle \geq \mu_i \|s_i\|_{\mathcal V}^2 > 0.
	\]
	\end{proof}
	
	\subsection{Lipschitz continuity of $E_i'$ and estimates of $\alpha_i^*$}
As Theorem \ref{thm-lower-bound-FASD} implies, the energy is always decreasing and iterates will remain in the \LC{sublevel set} $\mathcal{B}$, but the search region, and, e.g., the point $\xi_i + s_i$, may not be contained in $\mathcal B$. To be able to use Lipschitz continuity, we introduce an enlarged set
	\begin{equation}
	\label{Bbar}
{\mathcal B}^+ := \left\{v \in  \mathcal V \ \middle| \  \operatorname{dist}(v, \mathcal B) \leq \sqrt{\chi} \right\},
	\end{equation}
where $\chi$ is given by
	\[
\chi := \frac{2L^2}{\mu \min_i\mu_i^2 } (E(u_0) - E(u)).
	\]
We then introduce a  Lipschitz continuity of $E_i'$ with respect to \LC{the projection of ${\mathcal B}^+$}:
	\begin{itemize}
 	\item[(E4)]
 (Lipschitz continuity of the first order derivative:) There exists a constant $L_i>0$, such that
	\begin{equation*}
\| E_i'(w) - E_i'(v) \|_{\mathcal{V}'} \leq L_i \| w - v \|_{\mathcal{V}},
	\end{equation*}
for all $w, v \in \mathcal{B}_i : = Q_i \mathcal{B}^+$.
	\end{itemize}
	
	\LC{\begin{remark}\rm
Observe that we must assume that (E3) holds for (E4) to make sense. In other words, we cannot assume (E4) without first assuming (E3), since $\mu_i$ is involved in the definition of $\chi$ and, therefore, ${\mathcal B}^+$. Regarding ${\mathcal B}^+$, note that it is not a sublevel set. However, it is straightforward to verify that both $\mathcal{B}^+$ and  $\mathcal{B}_i = Q_i \mathcal{B}^+$ are convex. The proofs are omitted for the sake of brevity. 	
	\end{remark}}

Later, we will show that $\xi_i + s_i \in \mathcal{B}_i$ so that we can take advantage of the Lipschitz continuity of $E_i'$ in our analysis. Notice that the Lipschitz continuity of $E_i'$ is imposed for the set $ Q_i{\mathcal B}^+$, which is related to $\mathcal B$ used in (E2). Interestingly, there is no relationship between $E$ and $E_i$ that is explicitly assumed for the moment. Indeed $E$ and $E_i$ are just related through the upper and lower bound of the first derivatives and norms. In general, based on the assumptions (E3) and (E4), we have the following lemma, which gives results analogous to those in Lemma~\ref{lm:dE} and~\ref{lm:energyatu-V}. 

	\begin{lemma}
	\label{lm:Ei-V}
Assume $E_i$ satisfies assumptions (E3) and (E4). For any $v, w \in \mathcal{B}_i$, 
\[
\mu_i \nrm{w-v}^2_{\mathcal{V}} \le \langle E_i'(w) -E_i'(v) , w-v\rangle  \le L_i\nrm{w-v}^2_{\mathcal{V}},
\]
and 
	\[
\frac{\mu_i}{2} \nrm{w-v}^2_{\mathcal{V}}  + \langle E_i'(v), w-v \rangle \le E_i(w)-E_i(v) \le  \langle E_i'(v), w-v \rangle + \frac{L_i}{2}\nrm{w-v}_{\mathcal{V}}^2.
	\]
Though it is not required, if it happens that $u_i \in \mathcal{B}_i$, where $u_i\in{\mathcal{V}}_i$ is the global minimizer of $E_i$, then for all $w \in \mathcal{B}_i$, 
	\[
\frac{\mu_i}{2} \nrm{w-u_i}^2_{\mathcal{V}}  \le E_i(w)-E_i(u_i) \le   \frac{L_i}{2}\nrm{w-u_i}_{\mathcal{V}}^2.
	\]
The lower bounds above hold for all $w\in \mathcal V_i$, without restriction.
	\end{lemma}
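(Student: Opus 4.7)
The plan is to mirror the proofs of Lemmas~\ref{lm:dE} and~\ref{lm:energyatu-V}, but localized to $\mathcal{V}_i$ and the set $\mathcal{B}_i$. For the first pair of inequalities, the lower bound is nothing but assumption~(E3), valid on all of $\mathcal{V}_i$. For the upper bound, I would apply the definition of the dual norm and (E4) to get
\[
|\langle E_i'(w)-E_i'(v), z\rangle| \le \|E_i'(w)-E_i'(v)\|_{\mathcal{V}'}\|z\|_\mathcal{V} \le L_i\|w-v\|_\mathcal{V}\|z\|_\mathcal{V},
\]
valid for $w,v\in\mathcal{B}_i$, and then set $z=w-v$.

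For the quadratic energy trap, I would use Taylor's theorem with integral remainder applied to the scalar function $\phi(t):=E_i\bigl(v+t(w-v)\bigr)$ on $[0,1]$. Since $\mathcal{B}_i=Q_i\mathcal{B}^+$ is convex (noted in the remark just before the statement), the segment $v+t(w-v)$ lies in $\mathcal{B}_i$ whenever $v,w\in\mathcal{B}_i$, so Lipschitz continuity and strong convexity may both be applied pointwise along the segment. Writing
\[
E_i(w)-E_i(v) = \langle E_i'(v), w-v\rangle + \int_0^1 \langle E_i'(v+t(w-v))-E_i'(v), w-v\rangle\,dt,
\]
the integrand is bounded above by $L_i t\|w-v\|_\mathcal{V}^2$ using the first-derivative estimate just proved, and below by $\mu_i t\|w-v\|_\mathcal{V}^2$ using (E3). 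Integrating over $[0,1]$ yields the factor $\tfrac{1}{2}$ in front of $L_i$ and $\mu_i$. To get the lower bound valid on all of $\mathcal{V}_i$, I note that the derivation of the lower bound only invoked (E3), which is a global assumption.

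For the final ``energy trap at the minimizer'' inequality, given $u_i\in\mathcal{B}_i$, I would use the Euler equation $\langle E_i'(u_i), w-u_i\rangle=0$ for all $w\in\mathcal{V}_i$ (the local analog of \eqref{eqn:Euler}, obtained as in Theorem~\ref{thm:convex&coercive}) to eliminate the linear term in the just-established quadratic energy trap, applied with $v=u_i$.

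The only subtle point, and hence the main obstacle, is ensuring that we may legitimately invoke (E4) along the entire segment $\{v+t(w-v):t\in[0,1]\}$ in the Taylor argument; this is precisely where the convexity of $\mathcal{B}_i$ (asserted in the remark preceding the lemma) is indispensable. Everything else is a routine transcription of the proofs of Lemmas~\ref{lm:dE} and~\ref{lm:energyatu-V} with $(E,\mu,L,\mathcal{B})$ replaced by $(E_i,\mu_i,L_i,\mathcal{B}_i)$.
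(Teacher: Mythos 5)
Your proposal is correct and follows exactly the route the paper intends: the paper gives no explicit proof of this lemma, stating only that the results are analogous to Lemmas~\ref{lm:dE} and~\ref{lm:energyatu-V}, and your argument is precisely that transcription, with the convexity of $\mathcal{B}_i$ correctly identified as the ingredient needed to apply (E4) along the segment in the Taylor expansion. Nothing is missing.
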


In order to better understand the choice of the step size, we introduce the scalar function $f_i$. See Figure~\ref{fig:fplot} and Equation~\eqref{eqn:energy-section-1d}.

	\begin{figure}[!htp]
	\begin{center}
	\begin{tikzpicture}[>=latex,scale=1.6,domain=-0.2:4.2]
    \draw[thick,->] (-0.5,0.0) -- (4.5,0.0) node[right] {$\alpha$};
    \draw[dashed,thick] (-0.5,0.5) -- (4.5,0.5) node[right] {};
    \draw[thick,->] (0,-1.5) -- (0,1.2) node[above] {};
    \draw[thick]   plot (\x,{0.0625*(\x-2.0)^4 + 0.25*(\x-2.0)^2 -1.5}) node[above] {$f_i(\alpha)$};
    \draw[fill=black] (4.0,0) node[below right] {$\alpha_{L,i}^o$} circle (1pt);
    \draw[fill=black] (4.0,0.5) node[below right] {$f_i(\alpha_{L,i}^o)$} circle (1pt);
    \draw[dashed,blue,thick] (2.0,-1.5)--(2.0,1);
     \draw[fill=black] (2.0,0) node[above right] {$\alpha_i^*$} circle (1pt);
     \draw[fill=black] (0.0,0.5) node[below left] {$f_i(0)$} circle (1pt);
     \draw[fill=black] (2.0,-1.5) node[below left] {$f_i(\alpha_i^*)$} circle (1pt);
     \draw[fill=black] (3.0,0.0) node[below left] {$\alpha_{L,i}$} circle (1pt);
	\end{tikzpicture}
	\end{center}
\caption{\LC{The function $f_i$ defined in \eqref{eqn:energy-section-1d}. $f_i$ is a one-dimensional energy section. It is straightforward to prove that its minimizer, $\alpha_i^*$ is positive.}}	
	\label{fig:fplot}
	\end{figure}
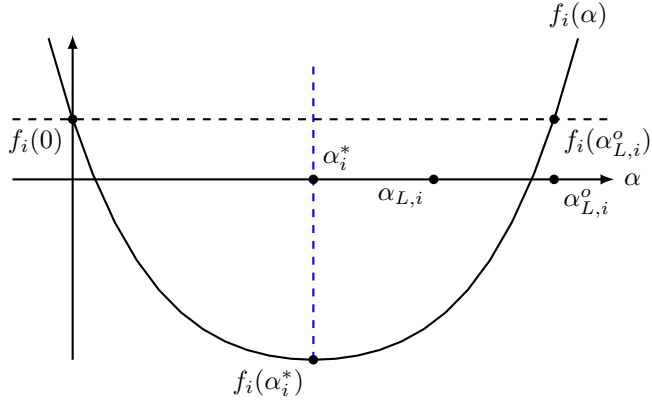

\LC{
	\begin{proposition}
Suppose that $E$ satisfies assumption (E1) and the local energy $E_i$ satisfies assumption (E3). Define the one-dimensional energy section
	\begin{equation}
	\label{eqn:energy-section-1d}
f_i(\alpha) := E(v_{i-1}+ \alpha s_i).
	\end{equation}
Then
	\[
f_i'(0) = \langle E'(v_{i-1}), s_i \rangle \le - \mu_i\nrm{s_i}_{\mathcal{V}}^2.
	\]
Furthermore, $\alpha_i^* > 0$, and, for all $\alpha \in (0, \alpha_i^*]$, $f_i(\alpha) < f_i(0)$.
	\end{proposition}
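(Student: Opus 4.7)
The plan is to reduce everything to a one-dimensional strong convexity argument for the scalar energy section $f_i$, treating the three claims in order.

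First, I would compute $f_i'(0)$ by the chain rule: since $E$ is Fr\'echet differentiable, the composition with the affine map $\alpha \mapsto v_{i-1}+\alpha s_i$ yields $f_i'(\alpha) = \langle E'(v_{i-1}+\alpha s_i), s_i\rangle$; evaluating at $\alpha = 0$ gives the identity $f_i'(0) = \langle E'(v_{i-1}), s_i\rangle$. The inequality $f_i'(0) \le -\mu_i\|s_i\|_{\mathcal V}^2$ is then immediate from Lemma~\ref{lm:si}, which was precisely established to give this descent property.

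Next I would show that $f_i:\mathbb{R}\to\mathbb{R}$ is strongly convex (in dimension one). From assumption (E1) applied along the line $\{v_{i-1}+\alpha s_i : \alpha\in\mathbb{R}\}$, for any $\alpha_1,\alpha_2\in\mathbb{R}$,
\[
(f_i'(\alpha_1)-f_i'(\alpha_2))(\alpha_1-\alpha_2) = \langle E'(v_{i-1}+\alpha_1 s_i) - E'(v_{i-1}+\alpha_2 s_i),\,(\alpha_1-\alpha_2)s_i\rangle \ge \mu\|s_i\|_{\mathcal V}^2(\alpha_1-\alpha_2)^2,
\]
so $f_i'$ is strongly monotone (with modulus $\mu\|s_i\|_{\mathcal V}^2$) and $f_i$ is strongly convex. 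Theorem~\ref{thm:convex&coercive}, applied to $f_i$ on the Hilbert space $\mathbb{R}$, then guarantees the existence and uniqueness of a global minimizer $\alpha_i^*$, characterized by $f_i'(\alpha_i^*)=0$. Since $f_i'(0) \le -\mu_i\|s_i\|_{\mathcal V}^2 < 0$ (assuming $s_i\neq 0$; if $s_i=0$, the statement becomes vacuous as $f_i$ is constant), and since $f_i'$ is strictly increasing, the zero of $f_i'$ must lie strictly to the right of $0$, giving $\alpha_i^* > 0$.

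Finally, strict monotonicity of $f_i'$ combined with $f_i'(\alpha_i^*)=0$ yields $f_i'(\alpha) < 0$ for every $\alpha < \alpha_i^*$, so $f_i$ is strictly decreasing on $(-\infty,\alpha_i^*]$. Consequently $f_i(\alpha) < f_i(0)$ for all $\alpha\in(0,\alpha_i^*]$, completing the proof. There is no real obstacle here: every ingredient is a one-dimensional restriction of what has already been proved in the Hilbert space setting, and the only point needing care is the degenerate case $s_i=0$, which the hypotheses and algorithm setup effectively exclude whenever a genuine subspace correction is made.
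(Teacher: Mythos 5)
Your proof is correct and follows essentially the same route as the paper: the identity and inequality for $f_i'(0)$ come from the chain rule and Lemma~\ref{lm:si}, and the positivity of $\alpha_i^*$ together with the descent property follow from the convexity of the one-dimensional section. The paper's own proof is just a two-sentence compression of this (citing $f_i'(0)<0$ and continuity of $f_i'$), while you make explicit the strong monotonicity of $f_i'$ coming from (E1) — which the paper only records afterwards in Lemma~\ref{lm:falpha} — so your version is, if anything, the more complete argument.
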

	\begin{proof}
Lemma \ref{lm:si} implies $f_i'(0) < 0$. As $f_i'$ is continuous, we conclude that the minimizing point is positive, $\alpha_i^* > 0$, and for all $\alpha \in (0, \alpha_i^*]$, $f_i(\alpha) < f_i(0) = E(v_{i-1})$.
	\end{proof}
}

	\begin{lemma}
	\label{lm:falpha}
Assume $E$ satisfies assumptions (E1) -- (E2). Then $f_i(\alpha)$, defined in \eqref{eqn:energy-section-1d}, is differentiable and strongly convex in the following sense: for all $\alpha, \beta \in \mathbb R$,
	\[
(f_i'(\alpha) - f_i'(\beta))(\alpha - \beta) \geq (\alpha - \beta)^2\mu \|s_i\|_{\mathcal V}^2 .
	\]
Furthermore, $f_i'$ is Lipschitz in the following sense: for all $0\le \alpha, \beta \leq \alpha_{L,i}$, 
	\[
|f_i'(\alpha) - f_i'(\beta)|  \leq L\|s_i\|_{\mathcal V}^2 |\alpha - \beta | ,
	\]
where $\alpha_{L,i} := (1+ \sqrt{\mu/L})\alpha_i^*$.
	\end{lemma}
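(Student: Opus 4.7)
The strong convexity part is straightforward. I would write $f_i'(\alpha)-f_i'(\beta) = \langle E'(v_{i-1}+\alpha s_i) - E'(v_{i-1}+\beta s_i), s_i\rangle$, multiply both sides by $(\alpha-\beta)$, and observe that $(v_{i-1}+\alpha s_i)-(v_{i-1}+\beta s_i)=(\alpha-\beta)s_i$. Assumption (E1), which holds on all of $\mathcal V$ without restriction, then gives $(\alpha-\beta)(f_i'(\alpha)-f_i'(\beta)) \ge \mu\|(\alpha-\beta)s_i\|_{\mathcal V}^2 = (\alpha-\beta)^2\mu\|s_i\|_{\mathcal V}^2$. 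Differentiability of $f_i$ follows from the chain rule and Fr\'echet differentiability of $E$.

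The Lipschitz estimate is essentially a direct consequence of (E2) applied to the pair $(v_{i-1}+\alpha s_i, v_{i-1}+\beta s_i)$, \emph{provided} both points lie in $\mathcal B$. The substantive task, and the reason for the particular choice $\alpha_{L,i}=(1+\sqrt{\mu/L})\alpha_i^*$, is to prove $v_{i-1}+\alpha s_i \in \mathcal B$ for every $\alpha\in[0,\alpha_{L,i}]$. I would do this by a bootstrap/continuation argument. Define $\alpha^{**}:=\sup\{\alpha\ge 0:f_i(\beta)\le E(u_0)\ \forall\beta\in[0,\alpha]\}$. Since $v_{i-1}\in\mathcal B$ and $f_i$ is continuous, $\alpha^{**}>0$, and since $f_i$ is decreasing on $[0,\alpha_i^*]$ (by convexity and $f_i'(0)<0$ from Lemma~\ref{lm:si}), one checks $\alpha^{**}\ge\alpha_i^*$.

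On the closed interval $[0,\alpha^{**}]$ the entire segment lies in $\mathcal B$, so the already-proved strong convexity of $f_i$ combined with (E2) gives both a quadratic lower bound at $0$,
\[
f_i(\alpha_i^*)\ge f_i(0)+f_i'(0)\alpha_i^*+\tfrac{\mu\|s_i\|_{\mathcal V}^2}{2}(\alpha_i^*)^2,
\]
which after using $-f_i'(0)\ge \mu\|s_i\|_{\mathcal V}^2\alpha_i^*$ (again from strong convexity with $f_i'(\alpha_i^*)=0$) yields $f_i(0)-f_i(\alpha_i^*)\ge \tfrac{\mu\|s_i\|_{\mathcal V}^2}{2}(\alpha_i^*)^2$; and a quadratic upper bound away from the minimizer, for $\alpha\in[\alpha_i^*,\alpha^{**}]$,
\[
f_i(\alpha)\le f_i(\alpha_i^*)+\tfrac{L\|s_i\|_{\mathcal V}^2}{2}(\alpha-\alpha_i^*)^2,
\]
obtained by integrating the Lipschitz estimate on $f_i'$ from $\alpha_i^*$. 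Summing, for $\alpha\in[\alpha_i^*,\min(\alpha^{**},\alpha_{L,i})]$,
\[
f_i(\alpha)\le f_i(0)-\tfrac{\mu\|s_i\|_{\mathcal V}^2}{2}(\alpha_i^*)^2+\tfrac{L\|s_i\|_{\mathcal V}^2}{2}(\alpha-\alpha_i^*)^2\le f_i(0)\le E(u_0),
\]
where the final cancellation at $\alpha=\alpha_{L,i}$ uses exactly $(\alpha-\alpha_i^*)^2=(\mu/L)(\alpha_i^*)^2$. A standard continuity argument then excludes $\alpha^{**}<\alpha_{L,i}$, so $[0,\alpha_{L,i}]\subset \{f_i\le E(u_0)\}$, hence $v_{i-1}+\alpha s_i\in\mathcal B$ there, and (E2) closes the proof.

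The main obstacle is the interplay in the bootstrap: the Lipschitz inequality I want to conclude is also the tool used to certify membership in $\mathcal B$. The sup-based continuation localizes Lipschitz to $[0,\alpha^{**}]$ where it already holds, and the break-even between the strong-convexity gain $\tfrac{\mu}{2}(\alpha_i^*)^2$ and the Lipschitz loss $\tfrac{L}{2}(\alpha-\alpha_i^*)^2$ is precisely what pins down the constant $\sqrt{\mu/L}$ in $\alpha_{L,i}$.
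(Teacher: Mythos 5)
Your argument follows the same route as the paper's proof: the strong convexity part is identical, and for the Lipschitz part both proofs certify membership of $v_{i-1}+\alpha s_i$ in $\mathcal B$ on an interval reaching beyond $\alpha_i^*$ and then bound the length of that interval by balancing the strong-convexity decrease $\frac{\mu}{2}(\alpha_i^*)^2\|s_i\|_{\mathcal V}^2$ against the Lipschitz increase $\frac{L}{2}(\alpha-\alpha_i^*)^2\|s_i\|_{\mathcal V}^2$. The paper works with the explicit return point $\alpha_{L,i}^o>\alpha_i^*$ defined by $f_i(\alpha_{L,i}^o)=f_i(0)$ (which exists by coercivity), whereas you run a sup-based continuation against the level $E(u_0)$; these are interchangeable.

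One step as written does not follow. You derive the energy decrease $f_i(0)-f_i(\alpha_i^*)\ge\frac{\mu}{2}\|s_i\|_{\mathcal V}^2(\alpha_i^*)^2$ from the quadratic \emph{lower} bound anchored at $0$, namely $f_i(\alpha_i^*)\ge f_i(0)+f_i'(0)\alpha_i^*+\frac{\mu}{2}\|s_i\|_{\mathcal V}^2(\alpha_i^*)^2$, combined with $f_i'(0)\alpha_i^*\le-\mu\|s_i\|_{\mathcal V}^2(\alpha_i^*)^2$. But substituting an upper bound for $f_i'(0)\alpha_i^*$ into the right-hand side of a ``$\ge$'' yields nothing: you have $A\ge B$ and $B\le C$, which gives no relation between $A$ and $C$, and in any case a lower bound on $f_i(\alpha_i^*)$ cannot produce the upper bound on $f_i(\alpha_i^*)$ that you need. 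The fix is one line: anchor the strong-convexity expansion at $\alpha_i^*$, where $f_i'(\alpha_i^*)=0$, to get
\[
f_i(0)\ \ge\ f_i(\alpha_i^*)+f_i'(\alpha_i^*)(0-\alpha_i^*)+\frac{\mu}{2}\|s_i\|_{\mathcal V}^2(\alpha_i^*)^2=f_i(\alpha_i^*)+\frac{\mu}{2}\|s_i\|_{\mathcal V}^2(\alpha_i^*)^2,
\]
which is exactly how the paper obtains this estimate (via Lemma~\ref{lm:energyatu-variant} applied on ${\rm span}\{s_i\}$). With that correction the rest of your bootstrap, and the break-even computation pinning down $\sqrt{\mu/L}$, goes through.
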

	\begin{proof}
The proof is based on the following identity
	\[
f_i'(\alpha) - f_i'(\beta) =   \langle E'(v_{i-1}+ \alpha s_i) - E'(v_{i-1}+ \beta s_i) , s_i \rangle.
	\]
Then, by assumption (E1),
	\begin{align*}
(f_i'(\alpha) - f_i'(\beta))(\alpha-\beta) = & \ \langle E'(v_{i-1}+ \alpha s_i) - E'(v_{i-1}+ \beta s_i) , \alpha s_i-\beta s_i \rangle	
	\\
\ge & \ \mu \|(\alpha-\beta) s_i\|^2_{\mathcal{V}}.
	\end{align*}

To use the Lipschitz inequality involving $E'$, we need to ensure that the points of evaluation are inside the set $\mathcal B$, which imposes an upper bound on $\alpha$ and $\beta$. As $f_i'(0)<0$ and $f_i'(\alpha_i^*) = 0$,  by coercivity, there exists $\alpha_{L,i}^o > \alpha_i^*$,   such that $f_i(0) = f_i(\alpha_{L,i}^o)$, and, for all $\alpha \in (0, \alpha_{L,i}^o)$, $f_i(\alpha) < f_i(0)$. This implies $v_{i-1}+ \alpha s_i \in \mathcal B$, for all $\alpha \in (0, \alpha_{L,i}^o).$ 

We then show $f_i'$ is Lipschitz with constant $L\|s_i\|_{\mathcal V}^2$ on the interval $[0,\alpha_{L,i}^o]$. 
%
For all $\alpha,\beta\in (0,\alpha_{L,i}^o)$, $\alpha\ne\beta$, 
	\begin{align*}
|f_i'(\alpha) - f_i'(\beta)| & = |\langle E'(v_{i-1}+\alpha s_i) - E'(v_{i-1}+\beta s_i) ,s_i \rangle| 
	\\
& = \frac{1}{|\alpha-\beta|} |\langle E'(v_{i-1}+\alpha s_i) - E'(v_{i-1}+\beta s_i) , (\alpha-\beta)s_i \rangle|
	\\
& \le  \frac{1}{|\alpha-\beta|} L \|(\alpha-\beta) s_i\|^2_{\mathcal{V}}
	\\
& =  L \|s_i\|^2_{\mathcal{V}} |\alpha-\beta| .
	\end{align*}
	
We now estimate $\alpha_{L,i}^o$. As $f_i'(\alpha_i^*) = 0$ and $f_i'$ is Lipschitz in $(0, \alpha_{L,i}^o)$, we have 
%
	\begin{align*}
0< f_i(\alpha_{L,i}^o) - f_i(\alpha^*_i) \leq
	 (\alpha_{L,i}^o - \alpha_i^*)^2\frac{L}{2} \|s_i\|_{\mathcal V}^2.
	\end{align*}
On the other hand, and again from Lemma~\ref{lm:energyatu-variant},
	\[
f_i(\alpha_{L,i}^o) - f_i(\alpha_i^*) = f_i(0) - f_i(\alpha_i^*) \geq \frac{\mu (\alpha_i^*)^2}{2} \|s_i\|_{\mathcal V}^2.
	\]
The desired bound 
	\[
\alpha_{L,i}^o \geq \alpha_{L,i}:=\left(1+ \sqrt{\frac{\mu}{L}}\right)\alpha_i^* >\alpha_i^*>0
	\]
then follows. 
Note that we need only $f_i'$ is Lipschitz with the same constant on the smaller interval $[0,\alpha_{L,i}] \subseteq [0,\alpha_{L,i}^o]$. The proof is complete.
	\end{proof}

To use the Lipschitz continuity of $E_i$, we require $\xi_i + s_i\in \mathcal B_i = Q_i \mathcal{B}^+$, which will be proved by a lower bound of the optimal step size.
 
	\begin{lemma}
	\label{lm:alphastarlower}
Assume the energy $E$ satisfies  assumptions (E1) -- (E2) and the local energy $E_i$ satisfies the strong convexity assumption (E3), then we have the lower bound 
 	\[
\frac{\mu_i}{L} \leq \alpha_i^*.
 	\]
Consequently,
 	\[
\alpha_{L,i}^o \geq \alpha_{L,i}:=\left(1+ \sqrt{\frac{\mu}{L}}\right)\alpha_i^* >\alpha_i^* \ge  \frac{\mu_i}{L} > 0.
 	\]
	\end{lemma}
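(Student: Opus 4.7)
The plan is to exploit the optimality condition $f_i'(\alpha_i^*)=0$ at the line-search minimum, combined with the lower bound on $|f_i'(0)|$ coming from Lemma~\ref{lm:si} and the upper bound on the change of $f_i'$ coming from the Lipschitz continuity established in Lemma~\ref{lm:falpha}. These two inequalities, when pitted against one another over the interval $[0,\alpha_i^*]$, immediately yield the desired lower bound on $\alpha_i^*$.

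More explicitly, I would first observe that since $\alpha_i^*$ is the unconstrained minimizer of the strongly convex one-dimensional function $f_i$, it satisfies $f_i'(\alpha_i^*)=0$. Next, using the identity $f_i'(0)=\langle E'(v_{i-1}),s_i\rangle$ together with Lemma~\ref{lm:si}, I obtain
\[
-f_i'(0) \;=\; \langle -E'(v_{i-1}),s_i\rangle \;\geq\; \mu_i\|s_i\|_{\mathcal V}^2.
\]
In particular $f_i'(0)<0$ (so that $s_i\neq 0$ may be assumed; otherwise the statement is vacuous or trivial).

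The next step is to apply Lemma~\ref{lm:falpha}. That lemma proves $f_i'$ is Lipschitz with constant $L\|s_i\|_{\mathcal V}^2$ on $[0,\alpha_{L,i}^o]$, and $\alpha_i^*\in(0,\alpha_{L,i}^o)$. Hence
\[
\mu_i\|s_i\|_{\mathcal V}^2 \;\leq\; -f_i'(0) \;=\; f_i'(\alpha_i^*)-f_i'(0) \;\leq\; L\|s_i\|_{\mathcal V}^2\,\alpha_i^*,
\]
and dividing by $L\|s_i\|_{\mathcal V}^2>0$ gives $\alpha_i^*\geq \mu_i/L$, as claimed. The ``Consequently'' part is then an immediate combination of this bound with the chain of inequalities $\alpha_{L,i}^o\geq\alpha_{L,i}=(1+\sqrt{\mu/L})\alpha_i^*>\alpha_i^*$ already established in Lemma~\ref{lm:falpha}.

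The only subtlety I foresee is making sure that the Lipschitz inequality for $f_i'$ is legitimately applicable on the whole interval $[0,\alpha_i^*]$; this requires $v_{i-1}+\alpha s_i\in\mathcal{B}$ for all such $\alpha$, which was verified inside the proof of Lemma~\ref{lm:falpha} via the coercivity-based construction of $\alpha_{L,i}^o$ with $\alpha_i^*<\alpha_{L,i}^o$. Hence no circularity arises: the definition of $\alpha_{L,i}$ uses $\alpha_i^*$, but the Lipschitz bound we use here holds on the larger, independently defined interval $[0,\alpha_{L,i}^o]$. Other than this small point, the proof is a direct two-line estimate.
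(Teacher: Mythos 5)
Your proof is correct and follows essentially the same route as the paper's: the chain $\mu_i\|s_i\|_{\mathcal V}^2 \le -f_i'(0) = f_i'(\alpha_i^*)-f_i'(0) \le L\|s_i\|_{\mathcal V}^2\,\alpha_i^*$ is exactly the paper's estimate $\alpha_i^* L\|s_i\|_{\mathcal V}^2 \ge -\langle E'(v_{i-1}),s_i\rangle \ge \mu_i\|s_i\|_{\mathcal V}^2$, rewritten in terms of the scalar section $f_i$ and with the lower bound delegated to Lemma~\ref{lm:si} rather than re-derived from the residual equation. Your remark on why the Lipschitz bound is applicable on $[0,\alpha_i^*]$ (via $\alpha_i^*<\alpha_{L,i}^o$ from Lemma~\ref{lm:falpha}) matches the paper's closing observation that $v_{i-1}+\varepsilon_i\in\mathcal{B}$.
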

	\begin{proof}
Recall that $\varepsilon_i = \alpha_i^* s_i\in {\rm span} \{ s_i\}$, and, due to the line search, we still have an orthogonality property that can be utilized, namely,
	\[
\langle E'(v_{i}), w \rangle = 0, \quad \forall \, w \in {\rm span} \{ s_i\}.
	\]
Thus $E'(v_{i-1} + \varepsilon_i) =0 $ in the dual of ${\rm span} \{ s_i\}$. By step 2 in the FASD Algorithm~\ref{alg:FASD},
 	\[
- E'(v_{i-1}) = E_i'(\xi_i + s_i) - E_i'(\xi_i) \quad \mbox{in} \quad \mathcal V_i'.
	\] 
The lower bound is obtained by the strong convexity of $E_i$ and Lipschitz continuity of $E'$:
	\begin{align*}
\alpha_i^* L \|s_i \|_{\mathcal V}^2 = \frac{1}{\alpha_i^*} L \|\varepsilon_i \|_{\mathcal V}^2 & \geq  \frac{1}{\alpha_i^*}\langle E'(v_{i-1} + \varepsilon_i) - E'(v_{i-1}), \varepsilon_i \rangle 
	\\
& = \langle E'(v_{i-1} + \varepsilon_i) - E'(v_{i-1}), s_i \rangle 
	\\
& =  -\langle E'(v_{i-1}), s_i \rangle 
	\\
& =  \langle E_i'(\xi_i + s_i) - E_i'(\xi_i), s_i \rangle
	\\
& \geq   \mu_i \| s_i \|_{\mathcal V}^2.
	\end{align*}
Note that $v_{i-1} + \varepsilon_i\in \mathcal B$ by Lemma \ref{lm:falpha} so that we can use Lipschitz continuity of $E'$.
	\end{proof}

Next we show the norm of $s_i$ is bounded and thus $\xi_i + s_i\in \mathcal B_i$.
	\begin{lemma}
	\label{lm:Bi}
The point $\xi_i + s_i$ is in the set $\mathcal B_i$.
	\end{lemma}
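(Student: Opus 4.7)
The plan is to produce $\xi_i+s_i$ as the image under $Q_i$ of an explicit point in $\mathcal{B}^+$, and then verify that point lies in $\mathcal{B}^+$ via a bound on $\|s_i\|_{\mathcal{V}}$. The natural candidate is $v_{i-1}+s_i$. Since $s_i\in\mathcal{V}_i$ and $Q_i$ is a projection onto $\mathcal{V}_i$, we have $Q_i s_i = s_i$, so $Q_i(v_{i-1}+s_i) = Q_i v_{i-1}+s_i = \xi_i+s_i$. Thus it suffices to show $v_{i-1}+s_i\in\mathcal{B}^+$.

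The second step uses the fact that $v_{i-1}\in\mathcal{B}$ (which follows from Theorem~\ref{thm-lower-bound-FASD}, since the energy is monotonically decreasing from $u_0$). Consequently
\[
\operatorname{dist}(v_{i-1}+s_i,\mathcal{B})\le \|(v_{i-1}+s_i)-v_{i-1}\|_{\mathcal{V}}=\|s_i\|_{\mathcal{V}},
\]
so it is enough to verify the bound $\|s_i\|_{\mathcal{V}}\le\sqrt{\chi}$.

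The core of the proof is this norm estimate, and it is the main (mild) obstacle. I would combine three already-proved ingredients. First, Lemma~\ref{lm:si} (together with the dual-norm inequality) gives
\[
\mu_i\|s_i\|_{\mathcal{V}}^2 \le \langle -E'(v_{i-1}),s_i\rangle \le \|E'(v_{i-1})\|_{\mathcal{V}'}\|s_i\|_{\mathcal{V}},
\]
hence $\|s_i\|_{\mathcal{V}}\le \mu_i^{-1}\|E'(v_{i-1})\|_{\mathcal{V}'}$. Second, since $E'(u)=0$ by the Euler equation and $v_{i-1}\in\mathcal{B}$, the Lipschitz bound (E2) yields $\|E'(v_{i-1})\|_{\mathcal{V}'}\le L\|v_{i-1}-u\|_{\mathcal{V}}$. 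Third, the lower bound in \eqref{ine:energy-bound} together with $E(v_{i-1})\le E(u_0)$ gives $\|v_{i-1}-u\|_{\mathcal{V}}^2\le \frac{2}{\mu}(E(u_0)-E(u))$. Chaining these produces
\[
\|s_i\|_{\mathcal{V}}^2 \le \frac{L^2}{\mu_i^2}\cdot\frac{2(E(u_0)-E(u))}{\mu} \le \frac{2L^2(E(u_0)-E(u))}{\mu\,\min_j\mu_j^2}=\chi,
\]
where the second inequality only weakens the denominator from $\mu_i^2$ to $\min_j\mu_j^2$.

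Combining the three steps, $v_{i-1}+s_i\in\mathcal{B}^+$, and therefore $\xi_i+s_i=Q_i(v_{i-1}+s_i)\in Q_i\mathcal{B}^+=\mathcal{B}_i$, which is what we wanted. The essential design choice worth highlighting is precisely the definition of $\chi$: it was engineered so that the worst-case $\mu_i$ produces exactly this containment, which is why the proof proceeds by a direct substitution rather than any delicate argument.
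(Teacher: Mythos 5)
Your proof is correct, and it shares the paper's outer skeleton (reduce to showing $v_{i-1}+s_i\in\mathcal{B}^+$ via the bound $\|s_i\|_{\mathcal{V}}^2\le\chi$, then push through $Q_i$), but the core norm estimate is obtained by a genuinely different route. The paper bounds $\|s_i\|_{\mathcal{V}}$ through the \emph{corrected} step: it writes $\|s_i\|_{\mathcal V}\le(\alpha_i^*)^{-1}\|\varepsilon_i\|_{\mathcal V}\le(L/\mu_i)\|\varepsilon_i\|_{\mathcal V}$ using the lower bound on the line-search parameter from Lemma~\ref{lm:alphastarlower}, and then controls $\|\varepsilon_i\|_{\mathcal V}^2\le\frac{2}{\mu}\bigl(E(v_{i-1})-E(v_i)\bigr)$ by the per-step energy decrease of Theorem~\ref{thm-lower-bound-FASD}. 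You instead bound $\|s_i\|_{\mathcal{V}}$ directly from the descent inequality of Lemma~\ref{lm:si}, the Lipschitz bound $\|E'(v_{i-1})\|_{\mathcal{V}'}=\|E'(v_{i-1})-E'(u)\|_{\mathcal{V}'}\le L\|v_{i-1}-u\|_{\mathcal{V}}$ (legitimate since both $v_{i-1}$ and $u$ lie in $\mathcal{B}$), and the quadratic energy trap \eqref{ine:energy-bound}; remarkably both chains land on exactly the constant $\chi$. What your version buys is independence from the line search: you never invoke Lemma~\ref{lm:alphastarlower} or the orthogonality $\langle E'(v_i),s_i\rangle=0$, only the monotone energy decrease needed to keep $v_{i-1}\in\mathcal{B}$, so your argument would transfer essentially verbatim to the approximate-line-search and no-line-search variants of Sections~\ref{sec:FASD-approx-line} and~\ref{sec:Original-FAS}. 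The paper's version, by contrast, reuses machinery it has already built for the upper-bound theorem and keeps the dependence on the step size explicit.
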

	\begin{proof}
To show that $\xi_i+s_i\in\mathcal{B}_i$, it suffices to show that $v_{i-1}+s_i \in {\mathcal B}^+$, since $\xi_i+s_i = Q_i(v_{i-1}+s_i)$. To start, we know that $v_{i-1}\in \mathcal B$; so by the definition of  ${\mathcal B}^+$ in~\eqref{Bbar}, it suffices to prove that $\| s_i \|_\mathcal{V}^2\le \chi$. By Theorem \ref{thm-lower-bound-FASD} and Lemma \ref{lm:alphastarlower}, we have
	\[
\frac{\mu_i^2}{L^2} \| s_i\|_{\mathcal V}^2 \leq (\alpha_i^*)^2 \| s_i\|_{\mathcal V}^2 =  \| \varepsilon_i \|_{\mathcal V}^2 \leq \frac{2}{\mu} (E(v_{i-1}) - E(v_i))\leq  \frac{2}{\mu} (E(u_0) - E(u)),
	\]
which implies
	\[
 \| s_i\|_{\mathcal V}^2 \le   \frac{2L^2}{\mu\min_i \mu_i^2 } (E(u_0) - E(u)) = \chi.
	\]
Therefore,
	\[
\operatorname{dist}(\mathcal{B},v_{i-1}+s_i) \le \|s_i\|_\mathcal{V}\le \sqrt{\chi},
	\]
and the result is proven.
	\end{proof}

\subsection{Upper bound}

With our estimates of $\alpha_i^*$ in place, we are now ready to establish an upper bound for the iterates in our FASD Algorithm~\ref{alg:FASD}.
	\begin{theorem}
	\label{thm:upper-FASD}
Suppose the space decomposition satisfies (SS1) and (SS2), the energy $E$ satisfies (E1) -- (E2), and $E_i$ satisfies (E3) -- (E4). Then we have the upper bound
	\[
E(u^{k+1}) - E(u) \leq C_U\sum _{i=1}^N\| \varepsilon_i \|_{\mathcal V}^2,
	\]
	where $ C_U:= C_A^2 \left[  C_S+ L  \left( 1 + \max_i \{  L_i/\mu_i \} \right)  \right]^2/(2 \mu).$
	\end{theorem}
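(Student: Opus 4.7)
My plan is to follow the template established in the proof of Theorem~\ref{thm:upper-bound-SSO-new} for SSO, starting from Lemma~\ref{lm:dE-energy-V} to get
\[
E(u^{k+1})-E(u) \le \frac{1}{2\mu}\|E'(u^{k+1})\|_{\mathcal V'}^2,
\]
and then estimating $\|E'(u^{k+1})\|_{\mathcal V'}$ by testing against an arbitrary $w\in\mathcal V$ with stable decomposition $w = \sum_i w_i$ from (SS1). The essential new feature compared to SSO is that we \emph{do not} have the fundamental orthogonality $E'(v_i)=0$ in $\mathcal V_i'$; the line search \eqref{exactline} only gives the one-dimensional orthogonality $\langle E'(v_i),s_i\rangle =0$. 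So I would split
\[
\langle E'(u^{k+1}),w_i\rangle = \langle E'(u^{k+1})-E'(v_i),w_i\rangle + \langle E'(v_i),w_i\rangle,
\]
and handle the two sums separately.

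For the first sum, I would telescope $E'(u^{k+1})-E'(v_i) = \sum_{j=i+1}^N (E'(v_j)-E'(v_{j-1}))$ exactly as in Theorem~\ref{thm:upper-bound-SSO-new}. Writing $v_j = v_{j-1}+\varepsilon_j$ with $\varepsilon_j\in\mathcal V_j$ and all intermediate points in $\mathcal B$ by Theorem~\ref{thm-lower-bound-FASD}, assumption (SS2) (with $w_{i,j}=v_{j-1}$, $u_j=\varepsilon_j$, $v_i=w_i$) yields the bound $C_S\,\bigl(\sum_j\|\varepsilon_j\|_{\mathcal V}^2\bigr)^{1/2}\bigl(\sum_i\|w_i\|_{\mathcal V}^2\bigr)^{1/2}$.

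For the second sum, which is the main obstacle, I need to control $\langle E'(v_i),w_i\rangle = \langle R_iE'(v_i),w_i\rangle$ in $\mathcal V_i'$. The local residual equation~\eqref{eq:residual} can be rewritten (as in Lemma~\ref{lm:si}) as
\[
R_iE'(v_{i-1}) = -\bigl(E_i'(\eta_i)-E_i'(\xi_i)\bigr),
\]
hence
\[
R_iE'(v_i) = R_iE'(v_i)-R_iE'(v_{i-1}) -\bigl(E_i'(\eta_i)-E_i'(\xi_i)\bigr).
\]
Since $v_{i-1},v_i\in\mathcal B$, assumption (E2) bounds the first piece by $L\|\varepsilon_i\|_{\mathcal V}$, and since $\xi_i,\eta_i=\xi_i+s_i\in\mathcal B_i$ by Lemma~\ref{lm:Bi}, assumption (E4) bounds the second piece by $L_i\|s_i\|_{\mathcal V}$. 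Now Lemma~\ref{lm:alphastarlower} gives $\alpha_i^*\ge \mu_i/L$, so $\|s_i\|_{\mathcal V} = \|\varepsilon_i\|_{\mathcal V}/\alpha_i^* \le (L/\mu_i)\|\varepsilon_i\|_{\mathcal V}$. Putting these together,
\[
\|R_iE'(v_i)\|_{\mathcal V_i'} \le L\bigl(1+L_i/\mu_i\bigr)\|\varepsilon_i\|_{\mathcal V}.
\]

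Combining the two estimates with discrete Cauchy--Schwarz and (SS1) produces
\[
|\langle E'(u^{k+1}),w\rangle|\le C_A\Bigl[C_S + L\bigl(1+\max_i L_i/\mu_i\bigr)\Bigr]\Bigl(\sum_i\|\varepsilon_i\|_{\mathcal V}^2\Bigr)^{1/2}\|w\|_{\mathcal V},
\]
so that squaring and dividing by $2\mu$ yields the stated upper bound with $C_U=C_A^2[C_S+L(1+\max_i L_i/\mu_i)]^2/(2\mu)$. The only genuinely new estimate is the Lipschitz/strong-convexity bookkeeping for the residual $R_iE'(v_i)$, which crucially depends on the lower bound $\alpha_i^*\ge\mu_i/L$ (Lemma~\ref{lm:alphastarlower}) to trade $\|s_i\|_{\mathcal V}$ for $\|\varepsilon_i\|_{\mathcal V}$; the rest is a direct adaptation of the SSO argument.
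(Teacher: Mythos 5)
Your proposal is correct and follows essentially the same route as the paper: the same splitting of $\langle E'(u^{k+1}),w\rangle$ into the telescoped term handled by (SS2) and the residual term $\langle E'(v_i),w_i\rangle$, with the same insertion of the identity $\tau_i-E_i'(\xi_i+s_i)=0$ (your rewriting of $R_iE'(v_i)$ is exactly this), the same use of (E2), (E4), Lemma~\ref{lm:Bi}, and the lower bound $\alpha_i^*\ge\mu_i/L$ from Lemma~\ref{lm:alphastarlower} to convert $\|s_i\|_{\mathcal V}$ into $\|\varepsilon_i\|_{\mathcal V}$. No gaps; this matches the paper's argument step for step.
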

	\begin{proof}
Note, for any $w \in \mathcal{V}$, we choose a stable decomposition $w = \sum_{i=1}^N w_i$, then
	\begin{align*}
\langle E'(u^{k+1}), w \rangle &= \sum_{i=1}^N \langle E'(u^{k+1}), w_i \rangle 
	\\
& = \sum_{i=1}^N \left(\langle E'(u^{k+1}) - E'(v_i), w_i \rangle + \langle E'(v_i), w_i \rangle \right)
	\\
& = {\rm I}_1 + {\rm I}_2,
	\end{align*}
where
	\begin{equation*}
{\rm I}_1 : = \sum_{i=1}^N \langle E'(u^{k+1}) - E'(v_i), w_i \rangle \quad \mbox{and} \quad {\rm I}_2 := \sum_{i=1}^N  \langle E'(v_i), w_i \rangle .
	\end{equation*}
Using the stability of the decomposition (SS1) and the strengthened Cauchy-Schwartz inequality (SS2), ${\rm I}_1$ can be estimated in exactly the same way as in Theorem~\ref{thm:upper-bound-SSO-new}. Therefore,
	\begin{equation*}
{\rm I}_1 \leq C_S C_A \left (\sum_{i=1}^N \|\varepsilon_i\|_{\mathcal V}^2\right )^{1/2}\|w\|_{\mathcal V}.
	\end{equation*}
For ${\rm I}_2$, we insert $\tau_i-E_i'(\xi_i + s_i)$, which is zero in $\mathcal{V}_i'$,  use the Lipschitz continuities, the standard Cauchy-Schwartz inequality, to get
	\begin{align*}
{\rm I}_2 & =  \sum _{i=1}^N \langle E'(v_i) - E'(v_{i-1}) - E_i'(\xi_i + s_i) + E_i'(\xi_i), w_i \rangle
	\\
& \leq \sum_{i=1}^N \left(  L \| \varepsilon_i \|_{\mathcal{V}} + L_i \| s_i \|_{\mathcal{V}} \right) \| w_i \|_{\mathcal{V}}
	\\
& \le  L \sum_{i=1}^N \left(   1 + \frac{L_i}{\mu_i}  \right)\| \varepsilon_i \|_{\mathcal{V}} \| w_i \|_{\mathcal{V}}
	\\
& \leq L C_A  \left( 1 + \max_{1\le i\le N} \frac{ L_i}{\mu_i}  \right)\left (\sum_{i=1}^N \|\varepsilon_i\|_{\mathcal V}^2\right )^{1/2}\| w\|_{\mathcal V} .
		\end{align*}
In the last estimate, we used the relation $s_i = {\alpha_i^*}^{-1}\varepsilon_i$ and the lower bound of $\alpha_i^*$ given in Lemma~\ref{lm:alphastarlower}. 

Putting the estimates together, we have, 
	\[
\nrm{E'(u^{k+1})}_{\mathcal{V}'}^2 \le C_A^2 \left[ C_S + L \left( 1 + \max_{1\le i\le N} \frac{L_i}{\mu_i} \right)\right]^2 \sum_{i=1}^N \|\varepsilon_i\|_{\mathcal V}^2 .
	\]
Using inequality~\eqref{ine:energy-dE-V} in Lemma~\ref{lm:dE-energy-V} with $v = u^{k+1}$,  the result follows.
	\end{proof}

	\begin{remark}\rm 
	\label{rem:quadratic-Ei}
Our theory suggests we can simply choose 
	\begin{equation}
	\label{linearEi}
E_i(w) = \frac{1}{2}\|w - \xi_i\|_{\mathcal V}^2 = \frac{1}{2}\|w - Q_i v_{i-1}\|_{\mathcal V}^2, \quad \forall \, w\in \mathcal{V}_i;
	\end{equation}
for then (E3) and (E4) hold with $L_i = \mu_i = 1$. Moreover, the local problem becomes like that of the linear preconditioned gradient descent method:
	\begin{equation}
\left(  \eta_i -\xi_i , w \right)_{\mathcal{V}_i} =  - \langle  R_iE'(v_{i-1}), w \rangle, \quad \forall \, w \in \mathcal{V}_i .
	\end{equation}
In this case \eqref{localproblem} has the closed form solution
	\[
\eta_i - \xi_i =: s_i = - \mathfrak{R}_i R_i E'(v_{i-1}),
	\]
where $\mathfrak{R}_i$ is the Riesz map $\mathcal V_i' \to \mathcal V_i$ and its realization is the inverse of an SPD matrix of size $\dim \mathcal V_i$.   In fact, we can even use, for any fixed $g_i\in \mathcal{V}_i$ that we like,
	\begin{equation*}
E_i(w) = \frac{1}{2}\|w - g_i\|_{\mathcal V}^2, \quad \forall \, w\in \mathcal{V}_i,
	\end{equation*}
and the same basic result is true (by linearity):  $s_i = - \mathfrak{R}_i R_i E'(v_{i-1})$. 	
\end{remark}

In any case, solving a linear local problem can dramatically reduce the computational cost of the FASD. See Section~\ref{sec:numerics} for a practical discussion of this point. In this setting, FASD is closely related to the coordinate descent methods analyzed in~\cite{Nesterov:2012Efficiency}. See also, for example,~\cite{Feng;Salgado;Wang;Wise:2017Preconditioned}. \LC{Another advantage of using \eqref{linearEi} is that one does not need to worry about the particular choice of ${\mathcal B}_i$. The quadratic energy in \eqref{linearEi} is globally Lipschitz.}

	\begin{remark} \rm
 We can also choose the local quadratic energy
 	\begin{equation}
	\label{NewtonEi}
E_i(w) = \frac{1}{2}\|w - \xi_i\|_{E''(\xi_i)}^2 : = \frac{1}{2} \langle E''(\xi_i)( w - \xi_i), w - \xi_i \rangle, \quad \forall w\in \mathcal V_i.
	\end{equation}
\LC{Here, recall that $\xi_i = Q_iv_{i-1}$ and $E''(\xi_i)$ should be understood as the restriction of the bilinear form $E''(\xi_i)$ on subspace $\mathcal{V}_i\times \mathcal V_i$.} 
 Then the local problem becomes one \LC{damped} Newton's iteration in subspace $\mathcal V_i$
	\[
s_i = - (R_iE''(\xi_i)I_i)^{-1}R_i E'(v_{i-1}).
	\]
In this setting, the block Newton's method proposed in~\cite{Lu:2017Randomized} can be interpreted as a FASD with space decomposition. We will investigate the randomized version in a future paper.
	\end{remark}
%

	\begin{corollary}
In addition to the hypotheses of the last theorem, let us assume that $E_i$ is quadratic, chosen as in \eqref{linearEi}. Then, 
	\[
E(u^{k+1}) - E(u) \leq \frac{C_A^2(C_S+ 2L)^2 }{2 \mu} \sum _{i=1}^N\| \varepsilon_i \|_{\mathcal V}^2 .
	\]
	\end{corollary}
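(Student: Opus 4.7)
The plan is to recognize that this corollary is an immediate consequence of Theorem~\ref{thm:upper-FASD}, applied with the particular choice of quadratic local energy. First I would verify that the quadratic energy $E_i(w) = \tfrac{1}{2}\|w - \xi_i\|_{\mathcal V}^2$ from \eqref{linearEi} satisfies both (E3) and (E4) with the sharp constants $\mu_i = L_i = 1$. This is essentially built into the definition: the Fr\'echet derivative of $E_i$ at $w\in \mathcal{V}_i$ acts as $\langle E_i'(w),z\rangle = (w-\xi_i, z)_{\mathcal{V}}$ for all $z\in \mathcal{V}_i$, so that $\langle E_i'(w)-E_i'(v), w-v\rangle = \|w-v\|_{\mathcal V}^2$ identically on $\mathcal{V}_i \times \mathcal{V}_i$, giving (E3) and (E4) simultaneously with $\mu_i = L_i = 1$, uniformly in $i$ and with no restriction on the underlying set.

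Next I would simply substitute these constants into the formula for $C_U$ given in Theorem~\ref{thm:upper-FASD}. Since $\max_i\{L_i/\mu_i\} = 1$, we have
\[
1 + \max_{1\le i \le N} \frac{L_i}{\mu_i} = 2,
\]
and the bracketed factor becomes $C_S + 2L$. Therefore
\[
C_U = \frac{C_A^2 (C_S + 2L)^2}{2\mu},
\]
which is precisely the claimed constant. Combining this with the conclusion of Theorem~\ref{thm:upper-FASD} yields the desired bound.

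There is no real obstacle here; the corollary is essentially a bookkeeping statement that records the value of the upper-bound constant when one uses the simplest practical choice of the local energy. The only minor point to be careful about is that, as noted in Remark~\ref{rem:quadratic-Ei}, the quadratic $E_i$ is globally Lipschitz, so the subset $\mathcal{B}_i = Q_i\mathcal{B}^+$ that appears in (E4) plays no restrictive role; the Lipschitz and strong-convexity estimates of $E_i'$ hold on all of $\mathcal{V}_i$ with constants equal to $1$. This means the hypotheses required to invoke Theorem~\ref{thm:upper-FASD} are automatically satisfied, so the application is straightforward.
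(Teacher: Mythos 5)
Your proposal is correct and follows exactly the route the paper intends: Remark~\ref{rem:quadratic-Ei} already records that the quadratic choice \eqref{linearEi} satisfies (E3)--(E4) with $\mu_i = L_i = 1$, and the corollary is then just the substitution $\max_i\{L_i/\mu_i\}=1$ into the constant $C_U$ of Theorem~\ref{thm:upper-FASD}. Your additional observation that the global Lipschitz property of the quadratic $E_i$ makes the restriction to $\mathcal{B}_i$ vacuous matches the paper's own remark.
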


\subsection{Convergence}
Using Theorems \ref{thm-lower-bound-FASD} and \ref{thm:upper-FASD}, and Corollary~\ref{cor:framework}, we obtain the following linear convergence result.
	\begin{corollary}
Let $u^k$ be the $k$-th iteration and $u^{k+1} = {\rm FASD}(u^{k})$. Suppose that the space decomposition satisfies assumptions (SS1) and (SS2), the energy $E$ satisfies assumption (E1) -- (E2), and the energy $E_i$ satisfies assumption (E3) -- (E4), then we have
	\[
E(u^{k+1}) - E(u) \leq \rho ( E(u^{k}) - E(u)),
	\]
with
	\[
\rho = \frac{C_A^2 \left[  C_S+ L  \left( 1 + \max_i \{  L_i/\mu_i \} \right)  \right]^2}{C_A^2 \left[  C_S+ L  \left( 1 + \max_i \{  L_i/\mu_i \} \right)  \right]^2 + \mu^2}.
	\]
Furthermore if $E_i$ is quadratic, chosen as in \eqref{linearEi}, then
	\[
\rho = \frac{C_A^2 \left( C_S+ 2L\right)^2}{C_A^2 \left( C_S+ 2L\right)^2 + \mu^2}.
	\]
	\end{corollary}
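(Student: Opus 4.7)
The plan is to observe that this final corollary is essentially a bookkeeping consequence of three results already established in the excerpt: the lower bound on energy decay from Theorem~\ref{thm-lower-bound-FASD}, the upper bound on the optimality gap from Theorem~\ref{thm:upper-FASD}, and the abstract framework in Corollary~\ref{cor:framework}. So the proof is a matter of identifying the constants $C_L$ and $C_U$ and plugging into the formula $\rho = C_U/(C_L+C_U)$.

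First I would set $d_k := E(u^k)-E(u)$, $\delta_k := E(u^k)-E(u^{k+1})$, and $\eta_k := \sum_{i=1}^N\|\varepsilon_i\|_{\mathcal V}^2$, matching the definitions used in Section~\ref{sec:SSO}. Then Theorem~\ref{thm-lower-bound-FASD} provides the lower bound \eqref{ineq:lower-bound-corrections} with $C_L = \mu/2$, while Theorem~\ref{thm:upper-FASD} provides the upper bound \eqref{ineq:upper-bound-corrections} with
\[
C_U = \frac{C_A^2\bigl[\,C_S + L(1+\max_i L_i/\mu_i)\bigr]^2}{2\mu}.
\]
An invocation of Corollary~\ref{cor:framework} then yields linear convergence at rate $\rho = C_U/(C_L+C_U)$.

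The final algebraic simplification just multiplies numerator and denominator by $2\mu$:
\[
\rho = \frac{C_A^2\bigl[\,C_S + L(1+\max_i L_i/\mu_i)\bigr]^2/(2\mu)}{\mu/2 + C_A^2\bigl[\,C_S + L(1+\max_i L_i/\mu_i)\bigr]^2/(2\mu)} = \frac{C_A^2\bigl[\,C_S + L(1+\max_i L_i/\mu_i)\bigr]^2}{\mu^2 + C_A^2\bigl[\,C_S + L(1+\max_i L_i/\mu_i)\bigr]^2},
\]
matching the stated rate. For the quadratic case \eqref{linearEi}, Remark~\ref{rem:quadratic-Ei} observes that $L_i = \mu_i = 1$, so $1 + \max_i L_i/\mu_i = 2$, and one substitutes to obtain the specialized rate $\rho = C_A^2(C_S+2L)^2 / (\mu^2 + C_A^2(C_S+2L)^2)$.

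There is really no obstacle here: the substantive work was already done in Theorems~\ref{thm-lower-bound-FASD} and~\ref{thm:upper-FASD}, whose proofs hinged on (i) the orthogonality induced by the exact line search together with Lemma~\ref{lm:energyatu-variant} for the lower bound, and (ii) the stable decomposition, the strengthened Cauchy–Schwarz inequality, and the lower bound on $\alpha_i^*$ from Lemma~\ref{lm:alphastarlower} for the upper bound. The corollary itself is just the synthesis.
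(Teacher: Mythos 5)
Your proposal is correct and follows exactly the paper's route: the paper likewise obtains this corollary by combining the lower bound of Theorem~\ref{thm-lower-bound-FASD} ($C_L=\mu/2$) and the upper bound of Theorem~\ref{thm:upper-FASD} with Corollary~\ref{cor:framework}, and your algebraic simplification and the specialization $L_i=\mu_i=1$ for the quadratic choice \eqref{linearEi} are both right.
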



	\section{FASD with Approximate Line Search} 
	\label{sec:FASD-approx-line}

In this section, we consider the FASD algorithm with approximated line search. The method is detailed in Algorithm~\ref{alg:FASD-approx}. The key difference between this algorithm and Algorithm~\ref{alg:FASD} is that a fixed step size $\alpha_i$ is employed rather than computing $\alpha_i^*$ via a line search. In this case, there is no need to repeatedly evaluate $E$ and its derivatives in the subspace. We need only compute $R_iE'(v_{i-1})$ once for the local problem (for use in the computation of $\tau_i$ and $\alpha_i$). 

In the next section, Section~\ref{sec:Original-FAS}, we shall also consider the original FAS, which corresponds to FASD with the step size $\alpha_i = 1$. We prove its convergence based on an additional approximation property.

	\begin{algorithm}[htp!]
\TitleOfAlgo{ $u^{k+1} = {\rm FASD-ALS}(u^k)$}
$v_0 = u^k$
	\;
\For{$i = 1 : N$}{
Compute the subspace \emph{$\tau$-perturbation}: let $\xi_i = Q_iv_{i-1}$ and 
	\begin{equation}
 \tau_i  := E'_{i}(\xi_i) - R_iE'(v_{i-1}) \in\mathcal{V}_i' ;
	\end{equation}
	\\
Solve the subspace residual problem: Find $\eta_i\in \mathcal{V}_i$, such that
	\begin{equation}
	\label{eq:residual-1}
\langle E'_{i}( \eta_i ), w \rangle =  \langle \tau_i, w \rangle, \; \forall w \in \mathcal{V}_i.	
	\end{equation}
	\\
Compute the search direction and the quadratic step size:
	\begin{align}
s_i &:= \eta_i- \xi_i \in \mathcal{V}_i , 
	\\
\LC{\alpha_i^q} &:= -\frac{\langle R_iE'(v_{i-1}), s_i \rangle}{L \|s_i\|_{\mathcal V}^2}.
	\label{eq:alphai} 
	\end{align}
Apply the subspace correction:
	\begin{equation}
v_{i} := v_{i-1} + \alpha_i^q s_i.
	\end{equation}
}
$u^{k+1} := v_{N}$
	\;
	\medskip
\caption{FASD algorithm with approximate line search (ALS).} 
	\label{alg:FASD-approx}
	\end{algorithm}

Recall the scalar function $f_i(\alpha) := E(v_{i-1}+ \alpha s_i)$, with $f_i(0) = E(v_{i-1})$, $f_i'(0) = \langle E'(v_{i-1}) , s_i \rangle < 0$. Using $f_i(0)$ and $f_i'(0)$, we define the quadratic function 
	\begin{equation}
q_i(\alpha) := f_i(0) + f_i'(0)\alpha + \frac{L\|s_i\|_{\mathcal{V}}^2}{2}\alpha^2.
	\label{eqn:q_i}
	\end{equation}
The optimal step size for FASD is, of course, $\alpha_i^* = \mathop{\rm argmin}_{\alpha\in\mathbb{R}}f_i(\alpha)$. Our choice for this new algorithm is 
	\[
\alpha_i^q = \mathop{\rm argmin}_{\alpha\in\mathbb{R}}q_i(\alpha) =  -\frac{f_i'(0)}{L \| s_i\|_{\mathcal V}^2} =  -\frac{\langle R_iE'(v_{i-1}), s_i \rangle}{L \| s_i\|_{\mathcal V}^2} ,
	\]
which satisfies the following estimate:
	\begin{lemma}
	\label{lm:alphai}
Assume the energy $E$ satisfies the Lipschitz continuity assumption (E2) and the local energy $E_i$ satisfies the strong convexity assumptions (E3), then
	\[
\frac{\mu_i}{L} \leq \alpha_i^q \leq \alpha_i^*.
	\]
	\end{lemma}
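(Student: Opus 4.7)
The plan is to verify the two inequalities separately, each being a fairly short consequence of lemmas already in hand. I would not need any new construction.

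For the lower bound $\mu_i/L \le \alpha_i^q$, I would simply substitute the definition of $\alpha_i^q$ from~\eqref{eq:alphai} and invoke Lemma~\ref{lm:si}. That lemma, which in turn uses the strong convexity assumption (E3) on $E_i$, gives $\langle -R_i E'(v_{i-1}), s_i\rangle \ge \mu_i\|s_i\|_{\mathcal V}^2$. Dividing by $L\|s_i\|_{\mathcal V}^2 > 0$ (note $s_i\ne 0$ unless we are already at the minimizer, again by Lemma~\ref{lm:si}) yields the lower bound at once.

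For the upper bound $\alpha_i^q \le \alpha_i^*$, I would work with the one-dimensional section $f_i(\alpha)= E(v_{i-1}+\alpha s_i)$. Recall $f_i'(0) = \langle E'(v_{i-1}),s_i\rangle < 0$ (again Lemma~\ref{lm:si}) and $f_i'(\alpha_i^*) = 0$ by the definition of the exact line-search minimizer, with $0<\alpha_i^* \le \alpha_{L,i}$. Then the Lipschitz estimate in Lemma~\ref{lm:falpha}, applied on the admissible interval $[0,\alpha_{L,i}]$ between the points $0$ and $\alpha_i^*$, gives
\[
-f_i'(0) \;=\; f_i'(\alpha_i^*) - f_i'(0) \;\le\; L\|s_i\|_{\mathcal V}^2\, \alpha_i^*.
\]
Dividing by $L\|s_i\|_{\mathcal V}^2$ and recognizing the left-hand side as exactly $\alpha_i^q$ produces $\alpha_i^q \le \alpha_i^*$.

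There is no real obstacle; the only point that deserves a line of care is confirming that both endpoints $0$ and $\alpha_i^*$ lie in $[0,\alpha_{L,i}]$ where the Lipschitz bound of Lemma~\ref{lm:falpha} is valid. Since $\alpha_{L,i} = (1+\sqrt{\mu/L})\alpha_i^* > \alpha_i^*$, this is automatic. An equivalent geometric picture (which I would sketch in one sentence for intuition) is that $q_i$ is a parabola majorizing $f_i$ on $[0,\alpha_{L,i}]$ with matching value and slope at $0$, so its minimizer $\alpha_i^q$ must sit to the left of $\alpha_i^*$; the calculation above is the quantitative version of that picture.
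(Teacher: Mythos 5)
Your proof is correct and follows essentially the same route as the paper: the lower bound via the definition of $\alpha_i^q$ and Lemma~\ref{lm:si}, and the upper bound via $f_i'(\alpha_i^*)=0$ together with the Lipschitz bound $-f_i'(0)=f_i'(\alpha_i^*)-f_i'(0)\le L\|s_i\|_{\mathcal V}^2\,\alpha_i^*$ (the paper writes this directly in terms of $\langle E'(v_{i-1}+\alpha_i^*s_i)-E'(v_{i-1}),s_i\rangle$ rather than citing Lemma~\ref{lm:falpha}, but it is the same estimate). Your extra remark checking that $0$ and $\alpha_i^*$ lie in $[0,\alpha_{L,i}]$ is exactly the point needed to justify invoking the Lipschitz property, so nothing is missing.
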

	\begin{proof}
The lower bound is obtained by the definition of $\alpha_i^q$ and Lemma \ref{lm:si}. To prove the upper bound, we notice that, due to line search,
	\[
f_i'(\alpha_i^*) =  \langle E'(v_{i-1} + \alpha_i^*s_i),s_i\rangle = 0
	\]
and thus
	\[
\alpha_i^q L\| s_i\|_{\mathcal V}^2  = -\langle R_iE'(v_{i-1}), s_i \rangle = \langle E'(v_{i-1} + \alpha_i^*s_i) - E'(v_{i-1}), s_i \rangle\leq \alpha_i^* L\|s_i\|_{\mathcal V}^2.
 	\]
	\end{proof}
	
	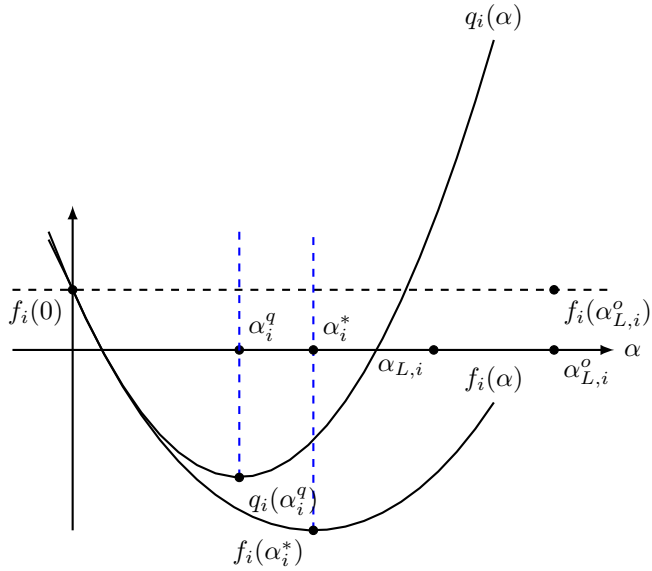
\begin{figure}[!htp]
	\begin{center}
	\begin{tikzpicture}[>=latex,scale=1.6,domain=-0.2:3.5]
    \draw[thick] plot (\x,{(1/64)*(\x-2.0)^4+(7/16)*(\x-2.0)^2-1.5}) node[above] {$f_i(\alpha)$};
    \draw[thick] plot (\x,{0.5 -(9/4)*\x +(13/16)*\x^2}) node[above] {$q_i(\alpha)$};
    \draw[thick,->] (-0.5,0.0) -- (4.5,0.0) node[right] {$\alpha$};
    \draw[dashed,thick] (-0.5,0.5) -- (4.5,0.5) node[right] {};
    \draw[thick,->] (0,-1.5) -- (0,1.2) node[above] {};
    \draw[fill=black] (4.0,0) node[below right] {$\alpha_{L,i}^o$} circle (1pt);
    \draw[fill=black] (4.0,0.5) node[below right] {$f_i(\alpha_{L,i}^o)$} circle (1pt);
    \draw[dashed,blue,thick] (2.0,-1.5)--(2.0,1);
     \draw[fill=black] (2.0,0) node[above right] {$\alpha_i^*$} circle (1pt);
     \draw[fill=black] (18/13,0) node[above right] {$\alpha_i^q$} circle (1pt);
    \draw[dashed,blue,thick] (18/13,-1.06)--(18/13,1);
     \draw[fill=black] (0.0,0.5) node[below left] {$f_i(0)$} circle (1pt);
     \draw[fill=black] (2.0,-1.5) node[below left] {$f_i(\alpha_i^*)$} circle (1pt);
     \draw[fill=black] (18/13,-1.06) node[below right] {$q_i(\alpha_i^q)$} circle (1pt);
     \draw[fill=black] (3.0,0.0) node[below left] {$\alpha_{L,i}$} circle (1pt);
	\end{tikzpicture}
	\end{center}
\caption{\LC{The functions $f_i$, defined in \eqref{eqn:energy-section-1d}, and its quadratic approximation $q_i$, defined in \eqref{eqn:q_i}. The quadratic minimizer $\alpha_i^q$ is always to the left of $\alpha_i^*$ by construction.}}	
	\label{fig:f-q_plot}
	\end{figure}

%

Now, since the optimal linear search procedure is broken, the orthogonality conditions with respect to the corrections are broken, and establishing the lower bound is a little more complicated.

	\begin{theorem}
Let $u^k$ be the $k$-th iteration and $u^{k+1} = {\rm FASD-ALS}(u^{k})$. Suppose that $E$ satisfies assumption (E1) -- (E2) and the local energy $E_i$ is strongly convex, satisfying assumption (E3). Then, we have
	\begin{equation*}
E(u^{k}) - E(u^{k+1}) \geq C_L \sum _{i=1}^N \| \alpha_i^q s_i\|^2_{\mathcal V}, \quad C_L = \frac{L}{2}.
	\end{equation*}  
	\end{theorem}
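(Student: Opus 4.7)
The natural strategy is a Taylor-type majorization argument on the one-dimensional energy section $f_i(\alpha) = E(v_{i-1} + \alpha s_i)$. Since $\alpha_i^q$ is, by construction, the unique minimizer of the quadratic $q_i$ defined in~\eqref{eqn:q_i}, I would first establish the majorization $f_i(\alpha) \le q_i(\alpha)$ on an interval containing $\alpha_i^q$, and then evaluate $q_i$ at its minimizer explicitly.

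The containment step is immediate: by Lemma~\ref{lm:alphai}, $\alpha_i^q \le \alpha_i^* < \alpha_{L,i}$, so $\alpha_i^q$ lies in the interval on which Lemma~\ref{lm:falpha} asserts the Lipschitz bound $|f_i'(\alpha) - f_i'(\beta)| \le L\|s_i\|_{\mathcal V}^2 |\alpha - \beta|$. Integrating that bound from $0$ to $\alpha$ yields the standard descent-lemma majorization
\[
f_i(\alpha) \le f_i(0) + f_i'(0)\alpha + \frac{L\|s_i\|_{\mathcal V}^2}{2}\alpha^2 = q_i(\alpha), \quad \alpha \in [0, \alpha_{L,i}].
\]

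Using the defining identity $L\|s_i\|_{\mathcal V}^2 \alpha_i^q = -f_i'(0)$, a one-line computation gives
\[
q_i(0) - q_i(\alpha_i^q) = \frac{(f_i'(0))^2}{2L\|s_i\|_{\mathcal V}^2} = \frac{L}{2}\|\alpha_i^q s_i\|_{\mathcal V}^2.
\]
Combining with the majorization above together with $q_i(0) = f_i(0) = E(v_{i-1})$ yields the per-subspace decrease
\[
E(v_{i-1}) - E(v_i) = f_i(0) - f_i(\alpha_i^q) \ge q_i(0) - q_i(\alpha_i^q) = \frac{L}{2}\|\alpha_i^q s_i\|_{\mathcal V}^2 ,
\]
and summing and telescoping over $i = 1, \ldots, N$ produces the desired global bound. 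A short induction along the inner loop shows each $v_{i-1} \in \mathcal B$, which is needed for Lemma~\ref{lm:falpha} to apply at step $i$, because the per-subspace inequality already guarantees monotone energy decrease along the sweep.

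The main technical subtlety is the same one that arose in the exact line search case, namely obtaining uniform Lipschitz control of $f_i'$ on an interval reaching past $\alpha_i^*$; this is precisely what Lemma~\ref{lm:falpha} buys us. Once that is in place together with the containment $\alpha_i^q \in [0, \alpha_{L,i}]$ from Lemma~\ref{lm:alphai}, the factor $L/2$ appears cleanly because $\alpha_i^q$ was defined as the minimizer of the Lipschitz-based quadratic upper bound.
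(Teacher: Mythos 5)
Your argument is correct and follows essentially the same route as the paper's proof: majorize $f_i$ by the quadratic $q_i$ on $[0,\alpha_{L,i}]$ via the Lipschitz bound of Lemma~\ref{lm:falpha}, place $\alpha_i^q$ in that interval using Lemma~\ref{lm:alphai}, evaluate $q_i$ at its minimizer to extract the decrease $\frac{L}{2}\|\alpha_i^q s_i\|_{\mathcal V}^2$, and telescope. Your additional remark about inductively keeping $v_{i-1}\in\mathcal{B}$ along the sweep is a point the paper leaves implicit, but it is the same argument.
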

	\begin{proof}
It suffices to prove 
	\[
E(v_{i-1}) - E(v_i) = f(0) - f(\alpha_i^q) \geq \frac{L}{2}\|\alpha_i^q s_i\|^2.
	\]
By Lemma \ref{lm:falpha}, for $\alpha \in [0, \alpha_{L,i}]$, $f_i'$ is Lipschitz continuous with constant $L\|s_i\|^2_{\mathcal V}$. Then for $\alpha \in [0, \alpha_{L,i}]$,
\begin{align*}
f_i(\alpha) - q_i(\alpha) & = f_i(\alpha) -  f_i(0) - \alpha f_i'(0) - \frac{L\|s_i\|^2_{\mathcal V}}{2} \alpha^2\leq 0.
\end{align*}
Namely $f_i(\alpha) \leq q_i(\alpha)$ for all $\alpha \in [0, \alpha_{L,i}]$. As $\alpha_i^q = \mathop{\rm argmin}_{\alpha\in\mathbb{R}}q_i(\alpha),$ and $\alpha_i^q \leq \alpha_i^*$,  we get
	\[
f_i(\alpha_i^q) \leq q_i(\alpha_i^q) = \min_{\alpha \in \mathbb R} q_i(\alpha) = f_i(0) - \frac{1}{2L \|s_i\|_{\mathcal V}^2}|f_i'(0)|^2 = f_i(0) - \frac{L}{2} \| \alpha_i^q s_i\|^2_{\mathcal{V}}.
	\]
In the last step, we have used the definition of $\alpha_i^q$ and this completes the proof.
\end{proof}

Since $\alpha_i^q$ has the same lower bound as $\alpha_i^*$, we can derive the upper bound in exactly the same way as the proof of Theorem~\ref{thm:upper-FASD}, only replacing $\varepsilon_i = \alpha_i^* s_i$ by $\alpha_i^q s_i$ and using the lower bound from Lemma~\ref{lm:alphai}. Thus, we only state the theorem below, and the proof is omitted.

	\begin{theorem}
Let $u^k$ be the $k$-th iteration and $u^{k+1} = {\rm FASD-ALS}(u^{k})$. Suppose the space decomposition satisfies (SS1) and (SS2), the energy $E$ satisfies (E1) -- (E2), and $E_i$ satisfies (E3) -- (E4). Then we have the upper bound
$$
E(u^{k+1}) - E(u) \leq C_U\sum _{i=1}^N\| \alpha_i^q s_i\|_{\mathcal V}^2, $$
where $C_U:=  C_A^2 \left[  C_S+ L  \left( 1 + \max_i \{  L_i/\mu_i \} \right)  \right]^2 /(2 \mu)$.
	\end{theorem}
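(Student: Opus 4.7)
The plan is to mimic the proof of Theorem~\ref{thm:upper-FASD} almost verbatim, with the substitution $\varepsilon_i = \alpha_i^* s_i \mapsto \alpha_i^q s_i$ and with Lemma~\ref{lm:alphai} playing the role of Lemma~\ref{lm:alphastarlower}. I would first invoke Lemma~\ref{lm:dE-energy-V} with $v=u^{k+1}$ to obtain $E(u^{k+1}) - E(u) \le \|E'(u^{k+1})\|_{\mathcal{V}'}^2/(2\mu)$, and then estimate $\langle E'(u^{k+1}), w\rangle$ for an arbitrary $w\in\mathcal{V}$ decomposed stably via (SS1) as $w = \sum_i w_i$ with $\sum_i \|w_i\|_{\mathcal{V}}^2 \le C_A^2\|w\|_{\mathcal{V}}^2$.

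Writing $\langle E'(u^{k+1}), w\rangle = I_1 + I_2$ with $I_1 := \sum_i \langle E'(u^{k+1}) - E'(v_i), w_i\rangle$ and $I_2 := \sum_i \langle E'(v_i), w_i\rangle$, the term $I_1$ telescopes and (SS2) gives the bound $I_1 \le C_SC_A\bigl(\sum_i\|\alpha_i^q s_i\|_{\mathcal{V}}^2\bigr)^{1/2}\|w\|_{\mathcal{V}}$ exactly as in Theorem~\ref{thm:upper-bound-SSO-new}. For $I_2$, since no exact line search was performed, $E'(v_i)$ need not vanish on $\mathcal{V}_i'$; instead I split
\[
\langle E'(v_i), w_i\rangle = \langle E'(v_i) - E'(v_{i-1}), w_i\rangle + \langle E'(v_{i-1}), w_i\rangle
\]
and use the residual equation~\eqref{eq:residual-1} to rewrite the second piece as $\langle E_i'(\xi_i) - E_i'(\xi_i+s_i), w_i\rangle$. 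The Lipschitz continuities (E2) and (E4) then produce the factor $L\|\alpha_i^q s_i\|_{\mathcal{V}} + L_i\|s_i\|_{\mathcal{V}}$ on each summand; Lemma~\ref{lm:alphai} absorbs $\|s_i\|_{\mathcal{V}} \le (L/\mu_i)\|\alpha_i^q s_i\|_{\mathcal{V}}$, yielding the collective constant $L\bigl(1+\max_i L_i/\mu_i\bigr)C_A$ after summing against $\|w_i\|_{\mathcal{V}}$ and applying (SS1). Combining $I_1$ and $I_2$, dividing by $\|w\|_{\mathcal{V}}$, taking the supremum, and squaring produces the claimed $C_U$.

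The main obstacle is the analog of Lemma~\ref{lm:Bi}: to legitimately apply (E4) at the point $\xi_i+s_i$, I must verify $\xi_i+s_i \in \mathcal{B}_i = Q_i\mathcal{B}^+$, and similarly that $v_{i-1},v_i\in\mathcal{B}$ so that (E2) applies. Energy monotonicity (from the just-proved lower-bound theorem) handles the latter. For the former, the lower-bound theorem gives $\|\alpha_i^q s_i\|_{\mathcal{V}}^2 \le (2/L)(E(v_{i-1})-E(v_i))$, and Lemma~\ref{lm:alphai} gives $\|s_i\|_{\mathcal{V}}^2 \le (L/\mu_i)^2\|\alpha_i^q s_i\|_{\mathcal{V}}^2 \le (2L/\mu_i^2)(E(u_0)-E(u))$; since $\mu\le L$, this is dominated by $\chi$, so $v_{i-1}+s_i\in\mathcal{B}^+$ and hence $\xi_i+s_i \in \mathcal{B}_i$. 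With this verification the argument closes in precisely the same form as Theorem~\ref{thm:upper-FASD}, explaining why the authors can omit the routine repetition.
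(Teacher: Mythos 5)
Your proposal is correct and follows exactly the route the paper intends: the paper omits this proof precisely because it is Theorem~\ref{thm:upper-FASD} verbatim with $\varepsilon_i=\alpha_i^*s_i$ replaced by $\alpha_i^qs_i$ and the step-size lower bound $\alpha_i^q\ge\mu_i/L$ from Lemma~\ref{lm:alphai} substituted for Lemma~\ref{lm:alphastarlower}. Your verification that $\xi_i+s_i\in\mathcal{B}_i$ via the ALS lower-bound theorem (with constant $L/2$) and the comparison $2L/\mu_i^2\le 2L^2/(\mu\min_i\mu_i^2)$ correctly supplies the analog of Lemma~\ref{lm:Bi} needed to invoke (E4).
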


We summarize the linear convergence result below.
\begin{corollary}
Let $u^k$ be the $k$-th iteration and $u^{k+1} = {\rm FASD-ALS}(u^{k})$. Suppose that the space decomposition satisfies assumptions (SS1) and (SS2), the energy $E$ satisfies assumption (E1) -- (E2), and the energy $E_i$ satisfies assumption (E3) -- (E4), then we have
	\[
E(u^{k+1}) - E(u) \leq \rho ( E(u^{k}) - E(u)),
	\]
with
	\[
\rho = \frac{C_A^2 \left[  C_S+ L  \left( 1 + \max_i \{  L_i/\mu_i \} \right)  \right]^2}{C_A^2 \left[  C_S+ L  \left( 1 + \max_i \{  L_i/\mu_i \} \right)  \right]^2 + L\mu}.
	\]
\end{corollary}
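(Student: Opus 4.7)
The plan is to apply the abstract linear convergence machinery of Theorem~\ref{thm:linear-conv} (packaged as Corollary~\ref{cor:framework}) to the FASD-ALS iterates. With $d_k := E(u^k) - E(u)$, $\delta_k := E(u^k) - E(u^{k+1})$, and $\eta_k := \sum_{i=1}^N \|\alpha_i^q s_i\|_{\mathcal V}^2$, the identity $\delta_k = d_k - d_{k+1}$ is automatic, so the only work is to identify the constants $C_L$ and $C_U$ from \eqref{ineq:lower-bound-corrections} and \eqref{ineq:upper-bound-corrections} for this iterative scheme, and then invoke the framework.

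First, the lower bound $\delta_k \ge C_L \eta_k$ comes directly from the preceding lower-bound theorem for FASD-ALS, which yields $C_L = L/2$. Second, the upper bound $d_{k+1} \le C_U \eta_k$ is supplied by the preceding upper-bound theorem for FASD-ALS, giving
\[
C_U = \frac{C_A^2 \left[ C_S + L\left(1 + \max_i \{L_i/\mu_i\}\right)\right]^2}{2\mu}.
\]
Both require precisely the hypotheses listed in the corollary: (E1)--(E2) on $E$, (E3)--(E4) on $E_i$, and (SS1)--(SS2) on the subspace decomposition.

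With these two constants in hand, Corollary~\ref{cor:framework} immediately gives the linear reduction $E(u^{k+1}) - E(u) \le \rho (E(u^k) - E(u))$ with $\rho = C_U/(C_L + C_U)$. Plugging in the values,
\[
\rho = \frac{\frac{C_A^2 [C_S + L(1 + \max_i L_i/\mu_i)]^2}{2\mu}}{\frac{L}{2} + \frac{C_A^2 [C_S + L(1 + \max_i L_i/\mu_i)]^2}{2\mu}},
\]
and multiplying numerator and denominator by $2\mu$ yields the stated expression
\[
\rho = \frac{C_A^2 [C_S + L(1 + \max_i L_i/\mu_i)]^2}{L\mu + C_A^2 [C_S + L(1 + \max_i L_i/\mu_i)]^2}.
\]

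There is essentially no obstacle here — the substantive analytical effort was already spent in establishing the lower and upper bounds for the FASD-ALS iterates in the preceding two theorems. This corollary is simply the bookkeeping step that combines them through the abstract framework. The only thing to be careful about is that the constant $C_L = L/2$ differs from the $C_L = \mu/2$ obtained in the exact line search case, which is why the denominator here features $L\mu$ rather than $\mu^2$ as in the corresponding corollary of Section~\ref{sec:FASD}.
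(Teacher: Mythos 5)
Your proposal is correct and follows exactly the route the paper intends: the corollary is stated without proof as the direct combination of the FASD-ALS lower bound ($C_L = L/2$) and upper bound ($C_U = C_A^2[C_S + L(1+\max_i L_i/\mu_i)]^2/(2\mu)$) through Corollary~\ref{cor:framework}, and your arithmetic for $\rho = C_U/(C_L+C_U)$ checks out. Your closing remark correctly identifies why the denominator carries $L\mu$ here instead of $\mu^2$.
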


The Lipschitz constant $L$ is used in the step size $\alpha_i$ which can be replaced by a local Lipschitz constant for the scalar function $f_i(\alpha)$, for $\alpha \in (0, \alpha_i^*)$ and popular line search algorithms can be used. 

%
%

	\section{Original FAS Method: FASD Without Line Search}
	\label{sec:Original-FAS}

Notice that the original FAS does not have the last line search step. Traditional FAS, listed as Algorithm~\ref{alg:FAS-no-line}, applies the subspace correction via
	\[
v_{i} := v_{i-1} +\LC{ \alpha_i^{\rm FAS}} s_i, \quad \alpha_i^{\rm FAS} = 1.
	\]
	
		\begin{algorithm}[htp!]
\TitleOfAlgo{ $u^{k+1} = {\rm FAS}(u^k)$}
$v_0 = u^k$
	\;
\For{$i = 1 : N$}{
Compute the subspace \emph{$\tau$-perturbation}: let $\xi_i = Q_iv_{i-1}$ and 
	\begin{equation}
 \tau_i  := E'_{i}(\xi_i) - R_iE'(v_{i-1}) \in\mathcal{V}_i' ;
	\end{equation}
	\\
Solve the subspace residual problem: Find $\eta_i\in \mathcal{V}_i$, such that
	\begin{equation}
	\label{eq:residual-2}
\langle E'_{i}( \eta_i ), w \rangle =  \langle \tau_i, w \rangle, \; \forall w \in \mathcal{V}_i ;
	\end{equation}
	\\
Compute the correction
	\begin{equation}
s_i := \eta_i- \xi_i \in \mathcal{V}_i ;
	\end{equation}
	\\
Apply the subspace correction:
	\begin{equation}
v_{i} := v_{i-1} +  s_i.
	\end{equation}
}
$u^{k+1} := v_{N}$
	\;
	\medskip
\caption{Traditional FAS: FASD with no line search.} 
	\label{alg:FAS-no-line}
	\end{algorithm}

 \LC{Previously, our choice of step size was motivated by the choice of step size in the gradient descent method~\cite{Nesterov:2013Introductory}. We shall prove $\alpha_i^{\rm FAS} = 1$ is also allowed -- that is to say, it leads to a convergent algorithm -- provided that the following approximation property is satisfied.}
 
	\begin{itemize} 
	\item[(AP)] 
Both $E$ and $E_i$ are twice Fr\'{e}chet differentiable. Furthermore, there exists a constant $0<\epsilon < \mu/2$ so that for all \LC{$w\in {\mathcal B}, \eta_i \in \mathcal V_i$} and all $u_i, v_i \in \mathcal{V}_i$ 
	\[
|  \langle E''(w+\eta_i) u_i, v_i \rangle - \langle E_i''(Q_i w+\eta_i) u_i, v_i \rangle | \leq \epsilon \| u_i \|_{\mathcal{V}} \| v_i \|_{\mathcal{V}}.
	\]
	\end{itemize}
	
	
For quadratic energy, $R_i E'' I_i$ is the coarse matrix on $\mathcal V_i$ formed by the triple product, \LC{via the so-called Galerkin method}, and $E_i''$ is the matrix obtained using the bilinear form associated to the local energy $E_i$. They should be close in a certain norm.

\LC{The original FAS is to choose $E_i = E|_{\mathcal V_i}$ so that is $E_i'' = E''$ on $\mathcal{V}_i\times \mathcal{V}_i$. Assume furthermore $E''$ is also Lipschitz continuous. Then (AP) can be verified 
\begin{equation} \label{eqn:verify-AP}
\| E''(w+\eta_i) - E''(Q_i w+\eta_i) \| \leq C \|w - Q_i w\| .
\end{equation}
	Note that in this case the local problem $E'(\eta_i) = \tau_i$ is cheaper than solving the Euler equation $E'(v_{i-1} + e_i) = 0$ in SSO.
}

	\LC{\begin{lemma}
Assume the energy $E$ satisfies the assumptions (E1) and (E2), and the approximation assumption (AP) holds. Then, $E_i'$ satisfies the Lipschitz condition and strongly convexity condition as follows,
	\[
(\mu-\epsilon) \| v-w \|_{\mathcal{V}}^2 \le \langle E'_i(v) - E'_i(w), v-w \rangle  \le  (L+\epsilon) \| v-w \|_{\mathcal{V}}^2, \quad \forall \ v, w\in \mathcal B\cap \mathcal V_i.
	\]
	\end{lemma}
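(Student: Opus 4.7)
The plan is to pass from second derivatives to first derivatives via the fundamental theorem of calculus along a straight segment in $\mathcal{V}_i$, and then to invoke (AP) to compare $E_i''$ with $E''$ on that segment. Fix $v, w \in \mathcal{B}\cap\mathcal{V}_i$ and set $\xi_t := w + t(v-w)$ for $t\in[0,1]$. Since $\mathcal{B}$ is convex (Proposition~\ref{prop:B-convex}) and both endpoints lie in $\mathcal{B}$, we have $\xi_t \in \mathcal{B}$; moreover $\xi_t \in \mathcal{V}_i$ because both $v$ and $w$ do. The fundamental theorem of calculus, applied to both $E'$ and $E_i'$ on this segment, yields
\[
\langle E_i'(v) - E_i'(w), v-w\rangle - \langle E'(v) - E'(w), v-w\rangle = \int_0^1 \bigl\langle \bigl(E_i''(\xi_t)-E''(\xi_t)\bigr)(v-w),\, v-w \bigr\rangle\,dt.
\]

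The key step is to write $\xi_t$ in the form required by (AP), namely as ``base point in $\mathcal{B}$'' plus an increment in $\mathcal{V}_i$. Take the base point to be $w\in \mathcal{B}$ itself and the increment to be $\eta_i := t(v-w)\in\mathcal{V}_i$. Since $w\in\mathcal{V}_i$, the projection property gives $Q_i w = w$, so both $E''$ and $E_i''$ in (AP) are evaluated at the same point $w + \eta_i = \xi_t$. Applying (AP) with the choice $u_i = v_i = v-w$ gives, for every $t\in[0,1]$,
\[
\bigl| \bigl\langle \bigl(E_i''(\xi_t)-E''(\xi_t)\bigr)(v-w),\, v-w \bigr\rangle \bigr| \le \epsilon \|v-w\|_{\mathcal{V}}^2.
\]
Integrating over $t\in[0,1]$, the identity above gives
\[
\bigl|\langle E_i'(v) - E_i'(w), v-w\rangle - \langle E'(v) - E'(w), v-w\rangle\bigr| \le \epsilon \|v-w\|_{\mathcal{V}}^2.
\]

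To finish, I would invoke Lemma~\ref{lm:dE}, which is a consequence of (E1) and (E2), to sandwich the $E'$-term:
\[
\mu\|v-w\|_{\mathcal{V}}^2 \le \langle E'(v) - E'(w), v-w\rangle \le L\|v-w\|_{\mathcal{V}}^2.
\]
Adding and subtracting this with the perturbation bound above gives precisely
\[
(\mu-\epsilon)\|v-w\|_{\mathcal{V}}^2 \le \langle E_i'(v)-E_i'(w), v-w\rangle \le (L+\epsilon)\|v-w\|_{\mathcal{V}}^2,
\]
which is the claim. Since $\epsilon < \mu/2$ by hypothesis, $\mu - \epsilon > \mu/2 > 0$ so the lower bound is genuine strong convexity.

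The proof is essentially bookkeeping; the only non-trivial ingredient is recognizing that (AP) applies with matching base points because $w\in\mathcal{V}_i$ makes $Q_i w = w$, which lets us bound the integrand on the entire segment uniformly. There is no hard estimate to perform and no recursive argument to run; once this identification is made, the rest is a direct combination of (AP), Lemma~\ref{lm:dE}, and the fundamental theorem of calculus.
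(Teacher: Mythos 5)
Your proof is correct and follows essentially the same route as the paper: both pass from $E_i'$ to $E_i''$ via Taylor's theorem with integral remainder along the segment joining $v$ and $w$, use the fact that this segment lies in $\mathcal{V}_i$ (so $Q_i$ acts as the identity and the two Hessians in (AP) are evaluated at the same point) to bound the perturbation by $\epsilon\|v-w\|_{\mathcal{V}}^2$, and then sandwich the remaining $\langle E'(v)-E'(w),v-w\rangle$ term using (E1)--(E2). Your bookkeeping of the base point $w\in\mathcal{B}$ and increment $t(v-w)\in\mathcal{V}_i$ is, if anything, slightly more careful in matching the exact form of assumption (AP) than the paper's own write-up.
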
}

	\LC{\begin{proof}
For any $v,w \in \mathcal V_i$, by Taylor's Theorem with integral reminder, we have
	\begin{align*}
\langle E'_i(v) & - E'_i(w), v-w \rangle 
	\\
&= \int_0^1 \langle E''_i(z(t))(v-w), v-w \rangle \mathrm{d}t
	\\
&= \int_0^1 \langle E''(z(t))(v-w), v-w \rangle \mathrm{d}t
	\\
& \quad + \int_0^1 \langle E''_i(z(t))(v-w), v-w \rangle - \langle E''(z(t))(v-w), v-w \rangle \mathrm{d}t 
	\\
& = \langle E'(v) - E'(w), v-w \rangle 
	\\
& \quad + \int_0^1 \langle E''_i(z(t))(v-w), v-w \rangle - \langle E''(z(t))(v-w), v-w \rangle \mathrm{d}t,
	\end{align*}
where $z(t) = tv + (1-t)w \in \mathcal V_i$, and thus $Q_iz = z$. When $v, w\in \mathcal B\cap \mathcal V_i$, using assumptions (E2) and (AP), we have
	\[
\langle E'_i(v) - E'_i(w), v-w \rangle \leq L \| v-w \|_{\mathcal{V}}^2 + \epsilon \| v-w \|_{\mathcal{V}}^2	= (L+\epsilon) \| v-w \|_{\mathcal{V}}^2.
	\]
On the other hand,  when $v, w\in \mathcal B \cap \mathcal V_i$, using assumptions (E1) and (AP), we have
	\[
\langle E'_i(v) - E'_i(w), v-w \rangle \geq \mu \| v-w \|_{\mathcal{V}}^2 - \epsilon \| v-w \|_{\mathcal{V}}^2	= (\mu-\epsilon) \| v-w \|_{\mathcal{V}}^2.
	\]
	\end{proof}}

	\begin{theorem} \label{thm:conv-FAS}
Let $u^k$ be the $k$-th iteration and $u^{k+1} = {\rm FAS}(u^{k})$, as in Algorithm~\ref{alg:FAS-no-line}, with local step size $\alpha_i^{\rm FAS} = 1$. Suppose that $E$ satisfies assumption (E1) and the approximation assumption (AP) holds with $\epsilon < \mu/2$. Then,  we have
	\begin{equation*}
 E(u^{k}) - E(u^{k+1}) \geq  C_L \sum _{i=1}^N \|s_i\|^2_{\mathcal V}, \quad C_L = \left( \frac{\mu}{2} - \epsilon \right).
	\end{equation*}
	\end{theorem}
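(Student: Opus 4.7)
My plan is to estimate $E(v_{i-1}) - E(v_i)$ on each subspace step by combining a second-order Taylor expansion of $E$ with the Euler equation defining $s_i$, and then invoke the approximation property (AP) to close the gap between $E''$ and $E_i''$. Summing the telescoping differences over $i=1,\ldots,N$ will yield the desired global bound.

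More concretely, since $E$ is twice Fr\'{e}chet differentiable, Taylor's theorem with integral remainder gives
\[
E(v_i) = E(v_{i-1}) + \langle E'(v_{i-1}), s_i\rangle + \int_0^1 (1-t)\,\langle E''(v_{i-1}+ts_i)s_i, s_i\rangle\,dt.
\]
The local residual equation \eqref{eq:residual-2} reads $\langle E'_i(\eta_i)-E'_i(\xi_i), w\rangle = -\langle E'(v_{i-1}), w\rangle$ for all $w\in\mathcal{V}_i$, so choosing $w=s_i$ and applying Taylor's theorem to $E_i'$ along the segment from $\xi_i$ to $\eta_i=\xi_i+s_i$ in $\mathcal{V}_i$, one gets
\[
-\langle E'(v_{i-1}), s_i\rangle = \int_0^1 \langle E''_i(\xi_i+ts_i)s_i, s_i\rangle\,dt.
\]
Subtracting the two identities yields
\[
E(v_{i-1}) - E(v_i) = \int_0^1 \langle E''_i(\xi_i+ts_i)s_i,s_i\rangle - (1-t)\langle E''(v_{i-1}+ts_i)s_i,s_i\rangle\,dt.
\]

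Now I apply (AP) with $w=v_{i-1}\in\mathcal{B}$ and $\eta_i = ts_i\in\mathcal{V}_i$, noting that $Q_iw+ts_i = \xi_i+ts_i$, to get the pointwise lower bound
\[
\langle E''_i(\xi_i+ts_i)s_i,s_i\rangle \ \ge\ \langle E''(v_{i-1}+ts_i)s_i,s_i\rangle - \epsilon\|s_i\|_\mathcal{V}^2.
\]
Substituting into the integral, the factors $1$ and $1-t$ combine to leave a $t$-weighted integral, and (E1) (which, together with twice differentiability, is equivalent to $\langle E''(\cdot)z,z\rangle \ge \mu\|z\|_\mathcal{V}^2$) gives
\[
E(v_{i-1}) - E(v_i) \ \ge\ \int_0^1 t\,\mu\|s_i\|_\mathcal{V}^2\,dt - \epsilon\|s_i\|_\mathcal{V}^2 = \left(\tfrac{\mu}{2}-\epsilon\right)\|s_i\|_\mathcal{V}^2.
\]
Telescoping over $i=1,\ldots,N$ gives the theorem, with the induction $v_0=u^k\in\mathcal{B} \Rightarrow v_i\in\mathcal{B}$ ensured by monotonicity of the energy since $\epsilon<\mu/2$.

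The main obstacle I anticipate is the bookkeeping to see that the $(1-t)$ from the Taylor remainder in $E$ combines with the $1$ coming from the $E_i$-integral so as to leave a positive $t$-weighted integrand; this is exactly what makes the hypothesis $\epsilon < \mu/2$ sufficient (rather than, say, $\epsilon<\mu$). One should also confirm that (AP) is applicable at each intermediate argument $v_{i-1}+ts_i$, which is immediate since (AP) only requires $w\in\mathcal{B}$ and arbitrary $\eta_i\in\mathcal{V}_i$, not that $w+\eta_i$ itself lie in $\mathcal{B}$.
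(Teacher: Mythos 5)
Your proof is correct and follows essentially the same route as the paper: both use the residual equation to cancel the first-order terms, Taylor's theorem with integral remainder along $y(t)=v_{i-1}+ts_i$ together with (AP) to compare $E''$ and $E_i''$, and strong convexity to extract the factor $\mu/2$. The only difference is bookkeeping — the paper first bounds $|\langle E'(v_i),s_i\rangle|\le\epsilon\|s_i\|_{\mathcal V}^2$ and then applies the first-order convexity inequality \eqref{ineq:strict-convexity}, whereas you expand $E$ to second order and integrate the $t$-weighted Hessian lower bound, arriving at the identical constant $\mu/2-\epsilon$.
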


	\begin{proof}
Recall that $\xi_i = Q_iv_{i-1}$ and $Q_i s_i = s_i$. Using equation~\eqref{eq:residual-2} and Taylor's theorem with integral remainder, we first estimate $|\langle E'(v_{i}), s_i \rangle |$ by
	\begin{align*}
|\langle E'(v_{i}), s_i \rangle | & = \left | \langle E'(v_{i-1} + s_i), s_i \rangle -  \langle E'(v_{i-1}), s_i \rangle  - [\langle E_i'(\xi_i + s_i), s_i \rangle  - \langle E'_i(\xi_i), s_i \rangle]  \right |
	\\
& =  \left |\int_0^1 \langle E''(y(t))\ s_i, s_i \rangle -  \langle E_i''(Q_iy(t))\ s_i, s_i \rangle \dd t \right |
	\\
& \LC{\le} \int_0^1 \left| \langle E''(y(t))\ s_i, s_i \rangle -  \langle E_i''(Q_iy(t))\ s_i, s_i \rangle \right | \dd t 
	\\
& \leq  \epsilon \|s_i\|^2_{\mathcal V},
	\end{align*}
\LC{
where $ y(t) := (1-t)v_{i-1} + t(v_{i-1} + s_i) = v_{i-1} + ts_i$. Note that $v_{i-1} \in \mathcal{B}$ and $s_i \in \mathcal{V}_i$ which allows us to use Assumption (AP) in the last step. 
}

Using assumption (E1) -- specifically estimate \eqref{ineq:strict-convexity} of Theorem~\ref{thm:convex&coercive} -- we get
	\begin{equation}
	\label{eq:lowerbound1}
E(v_{i-1}) - E(v_{i-1} + s_i) \geq -\langle E'(v_{i-1} + s_i), s_i \rangle + \frac{\mu}{2} \| s_i \|^2_{\mathcal{V}} \ge  \left( \frac{\mu}{2} - \epsilon \right) \| s_i\|^2_{\mathcal{V}}.
	\end{equation}
	\end{proof}

\LC{The upper bound for FAS (where $\alpha_i^{\rm FAS} = 1$) is easy, as there is now no need to have a lower bound of the step size.}

	\begin{theorem}
Let $u^k$ be the $k$-th iteration and $u^{k+1} = {\rm FAS}(u^{k})$ with local step size $\alpha_i = 1$. Suppose the space decomposition satisfies (SS1) and (SS2), the energy $E$ satisfies (E1) -- (E2), and assumption (AP) holds. Then we have the upper bound
	\[
E(u^{k+1}) - E(u) \leq C_U \sum _{i=1}^N\|s_i\|_{\mathcal V}^2,
	\]
where $C_U = C_A^2 (C_S  + \epsilon)^2/(2\mu)$.
	\end{theorem}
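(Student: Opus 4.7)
The plan is to follow the same template used in Theorems \ref{thm:upper-bound-SSO-new} and \ref{thm:upper-FASD}. Begin by invoking Lemma \ref{lm:dE-energy-V} with $v = u^{k+1}$ to reduce the bound on $E(u^{k+1}) - E(u)$ to a bound on $\|E'(u^{k+1})\|_{\mathcal V'}^2/(2\mu)$. Then, for arbitrary $w\in \mathcal V$, take a stable decomposition $w = \sum_{i=1}^N w_i$ guaranteed by (SS1) and split
\[
\langle E'(u^{k+1}), w\rangle = \underbrace{\sum_{i=1}^N \langle E'(u^{k+1}) - E'(v_i), w_i \rangle}_{{\rm I}_1} + \underbrace{\sum_{i=1}^N \langle E'(v_i), w_i \rangle}_{{\rm I}_2}.
\]
The term ${\rm I}_1$ is rewritten as a telescoping sum $\sum_i \sum_{j>i}\langle E'(v_j) - E'(v_{j-1}), w_i\rangle$ and bounded, via (SS2) applied with $u_j = s_j$ (since now $v_j = v_{j-1} + s_j$) and then (SS1), by $C_S C_A (\sum_i \|s_i\|_{\mathcal V}^2)^{1/2}\|w\|_{\mathcal V}$, exactly as in the SSO proof. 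Note that the lower bound (Theorem \ref{thm:conv-FAS}) together with $\epsilon < \mu/2$ guarantees each $v_j \in \mathcal B$, so the hypotheses of (SS2) are met.

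The new ingredient is the treatment of ${\rm I}_2$, which in SSO vanished by fundamental orthogonality and in FASD was handled by the exact/quadratic line search. Here, for $\alpha_i^{\rm FAS} = 1$, the residual equation \eqref{eq:residual-2} gives $\langle E_i'(\xi_i+s_i) - E_i'(\xi_i), w_i\rangle = -\langle E'(v_{i-1}), w_i\rangle$ for all $w_i\in \mathcal V_i$. Combined with $\langle E'(v_i), w_i\rangle = \langle E'(v_{i-1}+s_i) - E'(v_{i-1}), w_i\rangle + \langle E'(v_{i-1}), w_i\rangle$, this yields the identity
\[
\langle E'(v_i), w_i \rangle = \int_0^1 \bigl[ \langle E''(y(t)) s_i, w_i\rangle - \langle E_i''(Q_i y(t)) s_i, w_i \rangle \bigr] \, \mathrm{d}t,
\]
where $y(t) = v_{i-1} + t s_i$, and we have used $Q_i s_i = s_i$ so that $Q_i y(t) = \xi_i + t s_i$. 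Assumption (AP) then gives $|\langle E'(v_i), w_i\rangle| \le \epsilon \|s_i\|_{\mathcal V}\|w_i\|_{\mathcal V}$. Summing, applying Cauchy–Schwarz, and using (SS1) produces ${\rm I}_2 \le \epsilon C_A (\sum_i \|s_i\|_{\mathcal V}^2)^{1/2} \|w\|_{\mathcal V}$.

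Adding the two estimates gives $\langle E'(u^{k+1}), w\rangle \le (C_S + \epsilon) C_A (\sum_i \|s_i\|_{\mathcal V}^2)^{1/2}\|w\|_{\mathcal V}$, so that $\|E'(u^{k+1})\|_{\mathcal V'}^2 \le C_A^2(C_S+\epsilon)^2 \sum_i \|s_i\|_{\mathcal V}^2$, and plugging into Lemma \ref{lm:dE-energy-V} yields the claim with $C_U = C_A^2(C_S+\epsilon)^2/(2\mu)$. The only real obstacle is the ${\rm I}_2$ estimate, since without the orthogonality or a step-size bound we must extract all control from (AP); the Taylor expansion above is exactly the device needed, and it parallels (but generalizes, from $w_i = s_i$ to arbitrary $w_i \in \mathcal V_i$) the computation already carried out in the proof of Theorem \ref{thm:conv-FAS}.
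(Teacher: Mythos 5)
Your proposal is correct and follows essentially the same route as the paper: reduce to $\|E'(u^{k+1})\|_{\mathcal V'}^2/(2\mu)$ via Lemma \ref{lm:dE-energy-V}, split with a stable decomposition into ${\rm I}_1$ (handled by (SS2) and (SS1) as in the SSO case) and ${\rm I}_2$, and control ${\rm I}_2$ by inserting the quantity $-E'(v_{i-1}) + E_i'(\xi_i) - E_i'(\xi_i+s_i)$, which vanishes in $\mathcal{V}_i'$ by the residual equation, then applying Taylor's theorem with integral remainder and (AP). Your explicit remarks that the energy-decrease result keeps every $v_j$ in $\mathcal B$ and that $Q_i y(t) = \xi_i + t s_i$ only make explicit details the paper leaves implicit.
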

	\begin{proof}
For any $w \in \mathcal{V}$, we choose a stable decomposition $w = \sum_{i=1}^N w_i$, then
	\begin{align*}
\langle E'(u^{k+1}), w \rangle &= \sum_{i=1}^N \langle E'(u^{k+1}), w_i \rangle
	\\
& = \sum_{i=1}^N \langle E'(u^{k+1}) - E'(v_i), w_i \rangle + \sum_{i=1}^N\langle E'(v_i), w_i \rangle
	\\
& = {\rm I}_1 + {\rm I}_2
	\end{align*}

The first term is bounded as before. Therefore,
	\begin{equation*}
{\rm I}_1 \leq C_S C_A \left (\sum_{i=1}^N \|s_i\|_{\mathcal V}^2\right )^{1/2}\|w\|_{\mathcal V}.
	\end{equation*}

For the second term, we insert $\tau_i - E_i'(\xi_i + s_i) = - E'(v_{i-1}) + E_i'(\xi_i) - E_i'(\xi_i + s_i)$, which is zero in $\mathcal V_i'$, and use \LC{Taylor's Theorem with integral remainder, followed by assumption (AP),} to get
	\begin{align*}
{\rm I}_2 &=  \sum _{i=1}^N \langle E'(v_i) - E'(v_{i-1}) - \left [E_i'(\xi_i + s_i) - E_i'(\xi_i) \right ], w_i \rangle
	\\
& \leq \epsilon  \sum _{i=1}^N  \| s_i\|_{\mathcal V}\|w_i\|_{\mathcal V}
	\\
&\leq \epsilon C_A \left (\sum_{i=1}^N \| s_i\|_{\mathcal V}^2\right )^{1/2}\|w\|_{\mathcal V}.
	\end{align*}
	\end{proof}

	\begin{corollary} \label{coro:FAS-convergence}
Let $u^k$ be the $k$-th iteration and $u^{k+1} = {\rm FAS}(u^{k})$. Suppose that the space decomposition satisfies assumptions (SS1) and (SS2), the energy $E$ satisfies assumption (E1) -- (E2), and the energy $E_i$ satisfies assumption (AP) with $\epsilon < \mu/2$, then we have
	\[
E(u^{k+1}) - E(u) \leq \rho ( E(u^{k}) - E(u)),
	\]
with
	\[
\rho = \frac{(C_S+ \epsilon)^2C_A^2}{(C_S+\epsilon)^2C_A^2 + \mu (\mu - 2 \epsilon)}.
	\]
	\end{corollary}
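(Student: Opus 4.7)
The plan is to observe that the corollary follows immediately from combining the two preceding theorems with the abstract framework established in Corollary~\ref{cor:framework} (equivalently, Theorem~\ref{thm:linear-conv}). Specifically, I would set
\[
d_k := E(u^k) - E(u), \qquad \delta_k := E(u^k) - E(u^{k+1}), \qquad \eta_k := \sum_{i=1}^N \|s_i\|_{\mathcal{V}}^2,
\]
so that $\delta_k = d_k - d_{k+1}$ automatically, exactly as in the setup preceding Corollary~\ref{cor:framework}.

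Next I would invoke Theorem~\ref{thm:conv-FAS} to supply the lower bound on the energy decay,
\[
\delta_k \;\geq\; C_L \eta_k \quad \text{with} \quad C_L = \frac{\mu}{2} - \epsilon,
\]
which is strictly positive precisely because the hypothesis $\epsilon < \mu/2$ is in force. I would then invoke the immediately preceding upper bound theorem to supply
\[
d_{k+1} \;\leq\; C_U \eta_k \quad \text{with} \quad C_U = \frac{C_A^2(C_S+\epsilon)^2}{2\mu}.
\]
Both hypotheses of Corollary~\ref{cor:framework} are therefore verified.

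Applying Corollary~\ref{cor:framework}, we obtain the linear contraction $d_{k+1} \leq \rho \, d_k$ with $\rho = C_U/(C_L + C_U)$. The only remaining step is the algebraic simplification: multiplying numerator and denominator by $2\mu$ gives
\[
\rho \;=\; \frac{C_A^2(C_S+\epsilon)^2}{\mu(\mu - 2\epsilon) + C_A^2(C_S+\epsilon)^2},
\]
which is the stated expression. There is really no obstacle here; the work was done in Theorem~\ref{thm:conv-FAS} (the lower bound, which is the genuinely new piece for FAS and relies crucially on (AP) to absorb the failure of the fundamental orthogonality) and the upper bound theorem (which mirrors the SSO/FASD upper bound proof, using (AP) in place of a line-search orthogonalization). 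The corollary is simply the bookkeeping that packages these two estimates into the linear convergence framework.
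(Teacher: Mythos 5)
Your proposal is correct and is exactly the argument the paper intends: the corollary is stated without a written proof precisely because it follows from plugging the lower bound of Theorem~\ref{thm:conv-FAS} ($C_L=\mu/2-\epsilon$) and the preceding upper bound ($C_U=C_A^2(C_S+\epsilon)^2/(2\mu)$) into Corollary~\ref{cor:framework}, and your algebraic simplification of $\rho=C_U/(C_L+C_U)$ matches the stated rate.
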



\section{Application and Numerical Experiments} \label{sec:numerics}
In this section we shall apply our theory to a model nonlinear problem with polynomial nonlinearity and provide numerical examples to illustrate the efficiency of a variant of FAS (Algorithm~\ref{alg:FAS-no-line}) with a local quadratic energy. 

	\subsection{A Model Nonlinear Problem}

Suppose that $\Omega\subset\mathbb{R}^d$, $d = 2,3$, is a star-shaped polytope, i.e. a polygon in 2-D or a polyhedron in 3-D. Suppose that $2\le p<\infty$, when $d = 2$, and $2\le p\le 6$, when $d=3$. We consider the following problem: given $f\in L^2(\Omega)$, find $u \in H_0^1(\Omega)$ such that
	\begin{equation}
	\label{eqn:pnonlinear}
\iprd{|u|^{p-2} u}{  \xi} +\varepsilon^2 \iprd{\nabla u}{\nabla \xi}  = \iprd{f}{\xi} , \quad  \forall \ \xi \in H_0^1(\Omega),
	\end{equation}
where $\varepsilon>0$ is parameter. One can show that the unique solution of \eqref{eqn:pnonlinear} is the unique minimizer of a certain strictly convex energy. 

\LC{	
	\begin{theorem}
Suppose that $\Omega\subset\mathbb{R}^d$, $d = 2,3$, is a star-shaped polytope, i.e. a polygon in 2-D or a polyhedron in 3-D. Suppose that $2\le p<\infty$, when $d = 2$, and $2\le p\le 6$, when $d=3$. For any $\nu \in H_0^1(\Omega)$, define the energy
	\begin{equation}
	\label{eqn:eng4c}
E(\nu) :=  \frac{1}{p} \norm{ \nu}{L^p}^p +\frac{\varepsilon^2}{2} \| \nabla \nu \|^2 - \iprd{f}{\nu} .
	\end{equation}
	The energy functional $E$ defined in \eqref{eqn:eng4c}  is twice Fr\'{e}chet differentiable and satisfies assumptions (E1) and (E2) with respect to the space $\mathcal V = H_0^1(\Omega)$, equipped with the norm $\| \nabla v\|$, for $v\in \mathcal V$. 
%
Therefore $E$ has a unique global minimizer in $H_0^1(\Omega)$. Furthermore, $u\in H_0^1(\Omega)$ is the unique minimizer of \eqref{eqn:eng4c} iff it is the solution of \eqref{eqn:pnonlinear}.
	\end{theorem}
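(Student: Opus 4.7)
The plan is to verify each claim in sequence: Fr\'echet differentiability of $E$ up to second order, the strong convexity (E1) and the Lipschitz continuity (E2) of $E'$, and then to invoke Theorem~\ref{thm:convex&coercive} to harvest existence, uniqueness, and the Euler characterization. The equivalence with problem~\eqref{eqn:pnonlinear} then falls out by writing down the Euler equation of $E$.

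For differentiability, the linear term $-(f,\nu)$ and the quadratic term $\tfrac{\varepsilon^2}{2}\|\nabla\nu\|^2$ are standard, so the nontrivial part is the nonlinearity $F(\nu):=\tfrac{1}{p}\int_\Omega|\nu|^p\,dx$. One computes formally $\langle F'(\nu),\xi\rangle = \int_\Omega|\nu|^{p-2}\nu\,\xi\,dx$ and $\langle F''(\nu)\xi,\eta\rangle = (p-1)\int_\Omega|\nu|^{p-2}\xi\eta\,dx$, and the job is to show these are continuous on $\mathcal{V}=H_0^1(\Omega)$ with the norm $\|\nabla\cdot\|$. The hypothesis on $p$ is exactly the continuous Sobolev embedding $H_0^1(\Omega)\hookrightarrow L^p(\Omega)$ (available for every $p<\infty$ when $d=2$ and for $p\le 6$ when $d=3$), and a standard application of H\"older's inequality together with this embedding controls each candidate derivative by $\|\nu\|_{L^p}^{p-1}\|\nabla\xi\|$ respectively $\|\nu\|_{L^p}^{p-2}\|\nabla\xi\|\|\nabla\eta\|$; the Gâteaux-to-Fr\'echet upgrade is then a routine continuity check.

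For (E1), I would directly compute
\[
\langle E'(w)-E'(v),w-v\rangle = \int_\Omega\bigl(|w|^{p-2}w-|v|^{p-2}v\bigr)(w-v)\,dx + \varepsilon^2\|\nabla(w-v)\|^2 .
\]
Since $t\mapsto|t|^{p-2}t$ is nondecreasing for $p\ge 2$, the first integral is pointwise nonnegative and can simply be discarded, yielding (E1) with $\mu=\varepsilon^2$ relative to the norm $\|\nabla\cdot\|$. The main obstacle is (E2), because the $L^p$-nonlinearity is not globally Lipschitz; this is precisely why assumption (E2) is only required on the sublevel set $\mathcal{B}$. The key point is that (E1), once established, already implies by Theorem~\ref{thm:convex&coercive} that $E$ is coercive in $\|\nabla\cdot\|$, so $\mathcal{B}$ is $H_0^1$-bounded and hence (by Sobolev embedding) $L^p$-bounded, say by some $M$. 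Then, starting from the pointwise estimate
\[
\bigl||w|^{p-2}w-|v|^{p-2}v\bigr| \le (p-1)\bigl(|w|^{p-2}+|v|^{p-2}\bigr)|w-v|,
\]
H\"older with exponents $(p/(p-2),p,p)$ gives
\[
\Bigl|\int_\Omega\bigl(|w|^{p-2}w-|v|^{p-2}v\bigr)\xi\,dx\Bigr| \le (p-1)\bigl(\|w\|_{L^p}^{p-2}+\|v\|_{L^p}^{p-2}\bigr)\|w-v\|_{L^p}\|\xi\|_{L^p},
\]
and the Sobolev embedding combined with $\|w\|_{L^p},\|v\|_{L^p}\le M$ converts this into a constant multiple of $\|\nabla(w-v)\|\|\nabla\xi\|$. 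Adding the trivially Lipschitz contribution from the $\varepsilon^2$-term, taking the supremum over $\xi$ with $\|\nabla\xi\|=1$, yields (E2) with an $L$ that depends on $M$, $p$, $\varepsilon$, and the Sobolev constant.

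With (E1) and (E2) in hand, Theorem~\ref{thm:convex&coercive} delivers the unique global minimizer $u\in H_0^1(\Omega)$ and the Euler equation $\langle E'(u),\xi\rangle=0$ for all $\xi\in H_0^1(\Omega)$. Written out for the present $E$ this is exactly \eqref{eqn:pnonlinear}, and conversely any solution of \eqref{eqn:pnonlinear} is a critical point of the strictly convex $E$ and hence its unique global minimizer, closing the equivalence.
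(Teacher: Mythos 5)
Your proposal is correct and follows the same overall skeleton as the paper: compute $E'$ and $E''$, get $\mu=\varepsilon^2$ for (E1), bound the sublevel set $\mathcal{B}$ to obtain (E2) locally via H\"older with exponents $\bigl(p/(p-2),p,p\bigr)$ and the Sobolev embedding $H_0^1\hookrightarrow L^p$, then invoke Theorem~\ref{thm:convex&coercive}. The one genuine difference is how you verify (E1) and (E2). The paper routes both through the second derivative: it applies the mean value theorem to write $\langle E'(w)-E'(v),\xi\rangle=\langle E''(z)\xi,w-v\rangle$ for an intermediate point $z\in\mathcal{B}$ (using convexity of $\mathcal{B}$), and then bounds the bilinear form $E''(z)$ above and below. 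You instead work directly with the difference $|w|^{p-2}w-|v|^{p-2}v$: for (E1) you discard the pointwise nonnegative monotone term, and for (E2) you use the scalar estimate $\bigl||w|^{p-2}w-|v|^{p-2}v\bigr|\le(p-1)\bigl(|w|^{p-2}+|v|^{p-2}\bigr)|w-v|$. Your route is slightly more elementary (no Banach-space mean value theorem, no need to locate the intermediate point inside $\mathcal{B}$), at the cost of a Lipschitz constant proportional to $\|w\|_{L^p}^{p-2}+\|v\|_{L^p}^{p-2}$ rather than $\|z\|_{L^p}^{p-2}$ -- an immaterial factor of two. Also, the paper derives the $L^p$ and $H_0^1$ bounds on $\mathcal{B}$ explicitly from the energy inequality with $u_0=0$ (yielding an explicit constant $C_1$), whereas you appeal abstractly to coercivity from Theorem~\ref{thm:convex&coercive}; both are valid, but the explicit route gives you a computable $L$.
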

	}
	\begin{proof}
	We verify that $E$ satisfies our assumptions. The first Fr\'{e}chet derivative of $E$ at a point $\nu$ may be calculated as follows: for any $\xi\in H_0^1(\Omega)$, 
	\[
\left. \frac{\dd}{\dd t} E(\nu + t \xi)\right|_{t=0} = \langle E'(\nu), \xi\rangle =  \iprd{|\nu|^{p-2}\nu}{ \xi} + \varepsilon^2\iprd{\nabla \nu}{\nabla \xi}  -\iprd{f}{\xi} .
	\]

	The second Fr\'{e}chet derivative exists for $p\ge 2$ and is a continuous bilinear operator. Given a fixed  $\nu\in H_0^1(\Omega)$, the action of the second variation on the  arbitrary pair $(\xi,\eta)\in H_0^1(\Omega)\times H_0^1(\Omega)$ is given by
	\[
\langle E''(\nu)\xi,\eta \rangle  =  (p-1)\iprd{|\nu|^{p-2}\xi}{\eta} +\varepsilon^2\iprd{\nabla \xi}{\nabla \eta}.
	\]


Without loss of generality, we choose $u_0 = 0$, so that $E(u_0) = 0$. Recall that $\mathcal{B}= \left\{v \in  \mathcal V  \ \middle| \ E(v)\le E(u_0)\right\}$. Observe that $\mathcal{B}$ is convex, since $E$ is convex. For $v\in \mathcal B$, $E(v) \le 0$, and we have
	\[
\frac{1}{p} \norm{ v}{L^p}^p +\frac{\varepsilon^2}{2} \| \nabla v \|^2 \leq (f, v) \leq \|f\|\|v\| \leq C_0(\varepsilon,C_{p,\Omega})\|f\|^2 + \frac{\varepsilon^2}{4}\|\nabla v\|^2,
	\]
where $C_{p,\Omega}>0$ is the constant in the Poincare inequality:
	\[
\nrm{v} \le C_{p,\Omega} \nrm{\nabla v}, \quad \forall \ v\in H_0^1(\Omega).
	\]
Thus, for $v\in \mathcal B$, the follow norms are bounded: 
	\begin{equation}
	\label{eqn:LpH1bound}
\norm{ v}{L^p} + \| \nabla v\| \leq C_1 = C_1(u_0,\varepsilon,p,f).
	\end{equation}

By the mean value theorem, there exists a $z = t v + (1-t)w$, for some $t\in [0,1]$, such that
	\[
\langle E'(w), \xi\rangle - \langle E'(v), \xi\rangle = \langle E''(z)\xi, w-v \rangle, \quad \forall\  \xi \in H_0^1(\Omega).
	\]
If $w,v\in \mathcal{B}$, then, since $\mathcal{B}$ is convex, $z\in\mathcal{B}$.  By \eqref{eqn:LpH1bound} $\nrm{ z}_{L^p} \le C_1$. Using H{\"o}lder's inequality, we have
	\begin{align*}
\left | \langle E''(\nu)\xi,\eta \rangle \right | &\le (p-1)\norm{\nu}{L^p}^{p-2}\norm{\xi}{L^p}\norm{\eta}{L^p} + \varepsilon^2\nrm{\nabla \xi } \cdot \nrm{\nabla \eta}	
\\
& \le \left [ (p-1) C_{p,\Omega}^2 \norm{\nu}{L^p}^{p-2} + \varepsilon^2  \right ] \nrm{\nabla \xi } \cdot \nrm{\nabla \eta}.
	\end{align*}
Therefore,
	\begin{align*}
\big| \langle E'(w), \xi\rangle - &  \langle E'(v), \xi\rangle \big| 
= \ \left| \langle E''(z)\xi,w-v \rangle \right| 
	\\
\le & \left [ (p-1) C^2_{p,\Omega} C_1^{p-2} + \varepsilon^2 \right ]\nrm{\nabla \xi } \cdot \nrm{\nabla (w-v)}.
	\end{align*}
Namely (E2) holds with $L := (p-1) C^2_{p,\Omega} C_1^{p-2} + \varepsilon^2$.

To see that $E$ is uniformly elliptic, for any $w,v\in\mathcal{V}$, there is an $\eta\in\mathcal{V}$, 
	\begin{align*}
\langle E'(w) - E'(v), w-v \rangle & = \langle E''(z) (w - v) , w-v \rangle, 
	\\
& = (p-1)\iprd{|\nu|^{p-2}(w - v)}{w-v} +\varepsilon^2\iprd{\nabla (w - v)}{\nabla (w - v)}
	\\
& \ge \varepsilon^2\nrm{\nabla (w - v)}^2.
	\end{align*}
(E1) holds with $\mu= \varepsilon^2$.


It follows that there is a unique global minimizer of the energy \eqref{eqn:eng4c}:
	\[
u := \mathop{\rm argmin}_{\nu\in H_0^1(\Omega)} E(\nu) .
	\]
Consequently, there is a unique solution to the Euler problem which is equation \eqref{eqn:pnonlinear}.
	\end{proof}


Now, suppose that $\Omega\subset\mathbb{R}^2$ is a polygonal domain and $\TauH$ is a conforming triangulation of $\Omega$. Let $\Tauh
$ be the triangulation obtained by quadri-secting $\TauH$. Specifically, if $K_i \in \Tauh$ is one of the four daughter triangles ($i = 1,\cdots, 4$)  obtained by quadri-secting $K\in \TauH$ -- that is by connecting the midpoints of $K$ -- then $h_{K_i} = H_K/2$, $i = 1,\cdots, 4$. A family of meshes constructed in this way is known to be globally quasi-uniform.

Define
	\[
S_h := \left\{ v\in C(\Omega)\cap H_0^1(\Omega) \middle| \left. v\right|_K \in \mathcal{P}_1(K), \  \forall K\in \Tauh \right\}	.
	\]
With a similar definition for $S_H$. Then, $S_H \subset S_h$, and the containment is proper. 

We shall consider the minimization of energy $E$ restricted to $S_h$ which is a subspace of $H_0^1(\Omega)$
	\[
\min_{v\in S_h} E(v),
	\]
and thus now $\mathcal V = S_h$ with norm $|v|_1 = \|\nabla v\|$. Notice that (E1) and (E2) still hold, as $S_h \subset H_0^1(\Omega)$.

Next we give a two-level space decomposition of $\mathcal V$ as follows.
Let $\mathcal{N} = \left\{{\bf x}_i\right\}_{i = 1}^N\subset  \mathbb{R}^2$ be the set of \emph{interior} nodes of $\Tauh$ and define the Lagrange nodal basis
	\[
B_h = \left\{ \psi_i \in S_h \  \middle| \ \psi_i({\bf x}_j) = \delta_{i,j} , \  1\le i,j\le N\right\}.
	\]
$B_h$ is a \emph{bona fide} basis for $S_h$, and we may use the following decomposition 
	\begin{equation}
\mathcal V = \sum_{i=0}^N \mathcal V_i = S_h,
	\label{eq:FEM-space-decomp}
	\end{equation}
where $\mathcal{V}_0 = S_H$, $\mathcal{V}_i = {\rm span}(\left\{ \psi_i\right\})$, $1\le i \le N$. \LC{(Note that we give the coarse space the index $0$.) }

The fact that this forms a stable decomposition is well known, i.e., Assumption (SS1) holds. 
\LC{	
\begin{lemma}
The decomposition of the finite element space $S_h$ described in \eqref{eq:FEM-space-decomp} satisfies Assumption (SS1).
	\end{lemma}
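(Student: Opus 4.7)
The plan is to establish the classical two-level stable decomposition with $C_A$ independent of the mesh size. Given $v\in S_h$, I will take $v_0 \in S_H$ to be a suitable quasi-interpolation of $v$ onto the coarse space (for instance, the Scott--Zhang interpolant, which is well-defined on $H^1_0(\Omega)$, preserves the homogeneous boundary condition, and is stable in $H^1$), and then decompose the fine-scale residual
\[
w := v - v_0 \in S_h, \qquad w = \sum_{i=1}^N w(\mathbf{x}_i)\, \psi_i,
\]
so that $v_i := w(\mathbf{x}_i)\psi_i\in\mathcal{V}_i$ for $i\ge 1$. Then $v = \sum_{i=0}^N v_i$ by construction.

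To bound $\sum_{i=0}^N \|\nabla v_i\|^2$, I would proceed in three steps. \emph{First}, invoke the standard stability and approximation properties of the coarse quasi-interpolant on the quasi-uniform family $\TauH$:
\[
\|\nabla v_0\| \le C_1 \|\nabla v\|, \qquad \|w\|_{L^2(\Omega)} = \|v - v_0\|_{L^2(\Omega)} \le C_2 H \|\nabla v\|.
\]
\emph{Second}, use the local nature of the hat functions together with a standard inverse-type estimate (or direct scaling argument on a reference element) to get
\[
\|\nabla \psi_i\|_{L^2(\Omega)}^2 \le C_3 \quad\text{(independent of $h$ in 2-D)},
\]
and the well-known mass-matrix equivalence
\[
\sum_{i=1}^N h^2\, w(\mathbf{x}_i)^2 \le C_4 \|w\|_{L^2(\Omega)}^2,
\]
which follows from the spectral equivalence between the diagonally-lumped and consistent mass matrices on quasi-uniform meshes. \emph{Third}, combine these to estimate
\[
\sum_{i=1}^N \|\nabla v_i\|^2 = \sum_{i=1}^N w(\mathbf{x}_i)^2 \|\nabla\psi_i\|^2 \le C_3 \sum_{i=1}^N w(\mathbf{x}_i)^2 \le \frac{C_3 C_4}{h^2}\, \|w\|_{L^2}^2 \le C_5\, \frac{H^2}{h^2}\, \|\nabla v\|^2.
\]
Since $H = 2h$ by construction of the quadrisection, the ratio $H/h$ is a fixed constant, so combining with the coarse bound gives $\sum_{i=0}^N\|v_i\|_{\mathcal V}^2 \le C_A^2 \|v\|_{\mathcal V}^2$ with a constant independent of $h$.

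The main obstacle is the handling of the one-dimensional (nodal) subspaces: a naive one-level decomposition would yield an $h$-dependent constant, because the sum of squared nodal values of a generic $S_h$ function scales like $h^{-2}\|v\|^2_{L^2}$. It is precisely the presence of the coarse space $\mathcal V_0 = S_H$ together with the $O(H)$ approximation of $w = v-v_0$ in $L^2$ that absorbs the $h^{-2}$ factor and produces an $h$-uniform bound. The only delicate ingredient is therefore choosing $v_0$ so that both the stability estimate and the $L^2$-approximation estimate hold with the correct scaling; using Scott--Zhang (rather than nodal interpolation, which would require pointwise control) accomplishes this in one stroke while preserving the zero boundary condition. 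The remaining estimates are entirely standard and rely only on the global quasi-uniformity of $\Tauh$ stated in the preceding paragraph.
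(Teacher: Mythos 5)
Your proof is correct and follows essentially the same route as the paper: a coarse component plus the nodal decomposition of the fine-scale residual $v - v_0$, with the $O(H)$ approximation in $L^2$ absorbing the $h^{-2}$ factor coming from the inverse/mass-matrix estimate, and $H/h=2$ fixed by the quadrisection. The only difference is that you take $v_0$ to be the Scott--Zhang interpolant where the paper uses the $L^2$-projection $Q_H$; both supply the required $H^1$-stability and $L^2$-approximation, so this is an inessential variation.
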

	}
	\begin{proof}
Let $Q_H:L^2(\Omega)\to S_H$ be the $L^2$-projection into $S_H$:
	\[
(Q_H v, w) = (v,w), \quad \forall \ w\in S_H. 	
	\]
For any $v\in S_h$, let $\tilde v = (I - Q_H)v \in S_h$ denote the error, and suppose that $\tilde v = \sum_{i=1}^N \tilde v_i$ is the nodal decomposition of the error in $S_h$. By the standard approximation property of $Q_H$ on quasi-uniform grids, an inverse inequality, and the stability of nodal decompositions in the $L^2$-norm, we have 
	\[
\sum_{i=1}^N |\tilde v_i|_1^2 \le C \sum_{i=1}^N h^{-2}\|\tilde v_i\|^2 \le C h^{-2}\|\tilde v\|^2 \le C |v|_1^2.
	\]
By the $H^1$-stability of $Q_H$ on quasi-uniform grids, we also have $|Q_H v|_1\lesssim |v|_1$. In conclusion, Assumption (SS1) holds \LC{if, for $v\in S_h$, we use the decomposition
	\[
v = Q_H v + (v- Q_H v) = Q_H v + \sum_{i=1}^N \tilde v_i.
	\]}
	\end{proof}

\LC{
	\begin{lemma}
Let $E$ be defined as in \eqref{eqn:eng4c}, and let $\mathcal{V} = S_h$ be decomposed into subspaces  as in \eqref{eq:FEM-space-decomp}. Then Assumption (SS2) holds.
	\end{lemma}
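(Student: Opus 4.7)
The plan is to reduce everything to showing that the Hessian $E''(\nu)$ is uniformly bounded on $\mathcal{B}$ in an $H^1$-sense, and then to exploit the local-support structure of the nodal basis $B_h$ together with the finite-overlap property of $S_H$ versus $\Tauh$ to avoid any $N$-dependence.

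First, by Taylor's theorem with integral remainder I would rewrite
\[
\langle E'(w_{i,j}+u_j) - E'(w_{i,j}), v_i \rangle = \int_0^1 \langle E''(w_{i,j}+tu_j)\,u_j, v_i \rangle\,\mathrm{d}t .
\]
Since $\mathcal{B}$ is convex and $w_{i,j}+u_j\in\mathcal{B}$, the integrand is evaluated at points $\nu_t := w_{i,j}+t u_j \in \mathcal{B}$, where we have the a priori bound $\norm{\nu_t}{L^p}\le C_1$ from \eqref{eqn:LpH1bound}. Using the explicit Hessian formula and H\"older's inequality (followed by the Sobolev embedding $H^1_0\hookrightarrow L^p$ that underlies $C_{p,\Omega}$), I would obtain the pointwise-in-$t$ bound
\[
|\langle E''(\nu_t)\,u_j, v_i \rangle| \le (p-1)\int_\Omega |\nu_t|^{p-2}|u_j||v_i|\,\dx + \varepsilon^2 (\nabla u_j,\nabla v_i) \le L\,|u_j|_{1,\omega_{ij}}\,|v_i|_{1,\omega_{ij}},
\]
where $\omega_{ij}:=\operatorname{supp}(u_j)\cap\operatorname{supp}(v_i)$ and $L$ is the Lipschitz constant already established in the previous theorem. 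The key point is that the right-hand side is supported only where $u_j$ and $v_i$ overlap geometrically.

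Next I would split the double sum into three regimes: (a) fine--fine ($i,j\ge 1$), (b) coarse--fine ($i=0$, $j\ge 1$), and (c) fine--coarse (already handled by symmetry of the formal structure, noting the inner factor $v_i$ is on the left index). In case (a), $\operatorname{supp}(\psi_i)\cap\operatorname{supp}(\psi_j)$ has positive measure only if nodes $i$ and $j$ share a triangle of $\Tauh$, and shape-regularity forces a uniform upper bound $\kappa$ on the valence of each interior vertex; thus for fixed $i$ only $O(\kappa)$ indices $j$ contribute. Applying Cauchy--Schwarz across neighbors yields
\[
\sum_{\substack{i,j\ge 1\\ j>i}}L\,|u_j|_1|v_i|_1 \le L\,\kappa \Bigl(\sum_{j\ge 1}|u_j|_1^2\Bigr)^{1/2}\Bigl(\sum_{i\ge 1}|v_i|_1^2\Bigr)^{1/2}.
\]
For case (b) I would localize: writing $|v_0|_{1,\omega_j}$ for the $H^1$-seminorm of $v_0\in S_H$ over the patch $\omega_j=\operatorname{supp}(\psi_j)$, the finite-overlap property of the fine patches ($\sum_j |v_0|_{1,\omega_j}^2 \le C\,|v_0|_1^2$) together with an analogous local $L^p$-estimate combines with Cauchy--Schwarz to give the desired bound with a constant independent of $N$.

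The main obstacle is the coarse--fine case, because $v_0\in S_H$ has global support and a naive bound introduces a factor $\sqrt{N}$; the remedy, as sketched above, is to use the locality of $\langle E''(\nu)\cdot,\cdot\rangle$ on each fine patch and the finite overlap of $\{\omega_j\}$. A secondary technical issue is that the nonlinear term $(p-1)(|\nu|^{p-2}u_j,v_i)$ must be controlled uniformly in $\nu\in\mathcal{B}$; this is exactly where the global bound \eqref{eqn:LpH1bound} and the restriction $p\le 6$ in three dimensions enter. Combining the three cases yields Assumption (SS2) with
\[
C_S \le C\bigl((p-1)C_{p,\Omega}^2 C_1^{p-2} + \varepsilon^2\bigr),
\]
where $C$ depends only on the shape-regularity of $\Tauh$ and on $\Omega$, but not on $h$, $H$, or $N$.
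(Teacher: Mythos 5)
Your proposal is correct and follows essentially the same route as the paper's proof: Taylor's theorem reduces each term to a Hessian pairing at a point of $\mathcal{B}$, the a priori bound \eqref{eqn:LpH1bound} together with H\"older's inequality and the Sobolev/Poincar\'e embedding controls the nonlinear part, and the bounded valence / finite overlap of the nodal patches combined with discrete Cauchy--Schwarz yields an $N$-independent constant of order $L\,C_{\mathcal T}$. The paper folds your coarse--fine case into the same bookkeeping by letting the index $0$ (the coarse space) participate in the sets $\mathcal N(i)$, so your explicit three-regime split is only an organizational difference.
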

	}
	\begin{proof}
Suppose that $w_{i,j}\in \mathcal B$,  $u_i \in \mathcal V_i$, $v_j\in \mathcal V_j$, with $w_{i,j} + u_i \in \mathcal{B}$. By Taylor's theorem,
	\begin{align*}
\sum_{i=0}^{N}\sum_{j=i+1}^{N} \langle  E'(w_{i,j}+u_i) - & \, E'(w_{i,j}) , v_j \rangle 
 	\\
& =   \sum_{i=0}^{N}\sum_{j=i+1}^{N}  \langle E''(z_{i,j})v_j, u_i\rangle  
	\\
& \le \sum_{i=0}^{N}\sum_{j=i+1}^{N} \left| (p-1)\iprd{|z_{i,j}|^{p-2} u_i}{v_j} +\varepsilon^2\iprd{\nabla u_i}{\nabla v_j} \right| ,
	\end{align*}
for some $z_{i,j}\in \mathcal{B}$ between $w_{i,j}\in\mathcal{B}$ and $w_{i,j} + u_i\in\mathcal{B}$, which satisfies the bound \eqref{eqn:LpH1bound}. The functions $u_i$, $1\le i,j\le N$, are local, though $u_0$ may have global support. The support of $v_i$, $1\le i\le N$, denoted $S_i$, is exactly equal to the union of those triangles that have the node ${\bf x}_i$ as a vertex. Define
	\[
\mathcal N(i) := \left\{j >i\ \middle| \  S_j\cap S_i \neq \emptyset \right\}.
	\]
Observe that $\#\left( \mathcal N(i)\right)$ is bounded by an integer that is much smaller than $N$. We have, using the continuous and discrete Cauchy-Schwartz inequalities, 
	\begin{align*}
\sum_{i=0}^{N} \sum_{j=i+1}^N \left(\nabla u_i,\nabla v_j\right)  & = \sum_{i=0}^{N} \sum_{j\in \mathcal N(i)} \left(\nabla u_i,\nabla v_j\right)_{S_i\cap S_j} 
	\\
& \le   \sum_{i =0}^N\sum_{j\in \mathcal N(i)}  \nrm{\nabla u_i}_{S_i\cap S_j}\nrm{\nabla v_j}_{S_i\cap S_j} 
	\\
& \le  \left(\sum_{i =0}^N\sum_{j\in \mathcal N(i)} \nrm{\nabla u_i}_{S_i\cap S_j}^2 \right)^\frac{1}{2} \left( \sum_{i =0}^N\sum_{j\in \mathcal N(i)}  \nrm{\nabla v_j}_{S_i\cap S_j}^2\right)^\frac{1}{2}
	\\
& \le \left( C_{\mathcal{T}} \sum_{i =0}^N  \nrm{\nabla u_i}^2 \right)^\frac{1}{2} \left( C_{\mathcal{T}} \sum_{j=0}^N  \nrm{\nabla v_j}^2\right)^\frac{1}{2},
	\end{align*}
where $C_{\mathcal{T}}>0$ is a mesh-structure-dependent parameter.  Since our mesh is shape regular and quasi-uniform, $C_{\mathcal{T}}$ is independent of $N$ and $h$. 

Similarly, 
	\begin{align*}
\sum_{i=0}^{N}\sum_{j\in \mathcal N(i)} \iprd{|z_{i,j}|^{p-2} u_i}{v_j} & = \sum_{i=0}^{N}\sum_{j\in \mathcal N(i)} \iprd{|z_{i,j}|^{p-2} u_i}{v_j}_{S_i\cap S_j}
	\\
& \le \sum_{i=0}^{N}\sum_{j\in \mathcal N(i)} \nrm{z_{i,j}}_{L^p(S_i\cap S_j)}^{p-2} \nrm{u_i}_{L^p(S_i\cap S_j)}\nrm{v_j}_{L^p(S_i\cap S_j)}
	\\
& \le \sum_{i=0}^{N}\sum_{j\in \mathcal N(i)} \nrm{z_{i,j}}_{L^p(\Omega)}^{p-2} \nrm{u_i}_{L^p(S_i\cap S_j)}\nrm{v_j}_{L^p(S_i\cap S_j)}
	\\
& \le \sum_{i=0}^{N}\sum_{j\in \mathcal N(i)} C_1^{p-2} \nrm{u_i}_{L^p(S_i\cap S_j)}\nrm{v_j}_{L^p(S_i\cap S_j)}
	\\
& \le C_1^{p-2} \left( \sum_{i=0}^{N}\sum_{j\in \mathcal N(i)}  \nrm{u_i}^2_{L^p(S_i\cap S_j)}\right)^{\frac{1}{2}}
	\\
& \quad \times  \left( \sum_{i=0}^{N}\sum_{j\in \mathcal N(i)}  \nrm{v_j}^2_{L^p(S_i\cap S_j)}\right)^{\frac{1}{2}}
	\\
& \le C_1^{p-2} \left( C_{\mathcal{T}} \sum_{i=0}^{N}  \nrm{u_i}^2_{L^p}\right)^{\frac{1}{2}}\left(C_{\mathcal{T}}  \sum_{i=0}^{N} \nrm{v_i}^2_{L^p}\right)^{\frac{1}{2}}
	\\
& \le C_1^{p-2} C_{\mathcal{T}} \left(  \sum_{i=0}^{N} C_{p,\Omega}^2 \nrm{\nabla u_i}^2 \right)^{\frac{1}{2}}\left( \sum_{i=0}^{N}  C_{p,\Omega}^2  \nrm{\nabla v_i}^2_{L^p}\right)^{\frac{1}{2}}
	\\
& = C_1^{p-2} C_{\mathcal{T}} C_{p,\Omega}^2  \left(  \sum_{i=0}^{N} \nrm{\nabla u_i}^2 \right)^{\frac{1}{2}}\left( \sum_{i=0}^{N} \nrm{\nabla v_i}^2_{L^p}\right)^{\frac{1}{2}}.
	\end{align*}
Therefore, there is a $C_S>0$ such that 
	\[
\sum_{i=0}^N\sum_{j=i+1}^N \langle  E'(w_{i,j}+u_i) -  E'(w_{i,j}) , v_j \rangle 	\le C_S \left(\sum_{i = 0}^N \nrm{\nabla u_i}^2 \right)^\frac{1}{2}	\left( \sum_{j = 0}^N \nrm{\nabla v_j}^2\right)^\frac{1}{2}.
	\]
In particular, $C_S := L C_{\mathcal{T}}$. Assumption (SS2) holds.
	\end{proof}

We apply SSO to each subspace $\mathcal V_i$ on the fine level, which is equivalent to, according to Remark \ref{rm:FASD-SSO}, use  $E_i(\eta)  := E(v_{i-1}-Q_i v_{i-1} +\eta)$. On the coarse space, we use $E_H = E|_{\mathcal{V}_H}$. The Assumption (AP) can be verified by~\eqref{eqn:verify-AP} and standard approximation property of the projection $Q_H$, i.e., $\| w - Q_H w \| \leq C H \| w \|_1$.  We have $\epsilon = C H$ for this case and, therefore, the condition $\epsilon < \mu/2 = \varepsilon^2/2$ in Theorem~\ref{thm:conv-FAS} holds when $H$ is small enough. For finite element functions $w\in V_h$, when near the minimizer, we could expect $w\in H^{3/2-\delta}$ for any $0< \delta \ll 1$ and thus high order approximation $\| w - Q_H w \| \leq C H^{3/2-\delta} \| w \|_{3/2-\delta}$ may hold.


	\subsection{Numerical Examples} 
In this subsection, we present some numerical results for the nonlinear problems described in the previous two subsections to illustrate our theoretical results.   For both problems, we will use piece-wise linear finite elements define $S_h$, and we use different versions FAS to solve the discretized nonlinear equations.  Our algorithms are implemented in MATLAB based on the software package $i$FEM~\cite{Chen.L2008c}.  The numerical experiments are conducted on a System76 Galago with an Intel Core i7-8550U CPU and 32GB RAM.  


We mainly focus on three different implementations of FAS (Algorithm~\ref{alg:FAS-no-line}), based on different choices of space decomposition and local energy. The geometric multigrid setting is considered here, i.e., we have a set of uniformly refined meshes and nested linear finite element spaces $\mathcal{V}^1 \subset \mathcal{V}^2 \subset \cdots \subset \mathcal{V}^J$, where $\mathcal{V}^\ell = \text{span}\{ \phi_1^\ell, \phi_2^\ell, \cdots, \phi_{N_\ell}^{\ell}\}$, with $\phi_i^\ell$ being the $i^{\rm th}$ nodal linear finite element basis element on level $\ell$.
	\begin{enumerate}
	\item 
The first implementation is the original FAS.  We consider standard multilevel nodal based space decomposition~$\mathcal{V} = \sum_{\ell=1}^{J} \sum_{i=1}^{N_{\ell}} \LC{\text{span}\{ \phi_i^{\ell}\}}$ and the local energy $E_i$ is defined as the restriction of $E$ on the subspace $\text{span}\{ \phi_i^{\ell} \}$. Newton's method is used to solve the local nonlinear problem and we set the tolerance to be $10^{-10}$ and at most $100$ iterations are allowed (in general, less than $5$ iterations are needed for solving the local problems in all of our numerical tests). We use a small tolerance to make sure each local problem is solved exactly in order to be consistent with our theoretical analysis. 
	\item 
The second implementation is a simplified version of FAS based on Remark~\ref{rem:quadratic-Ei} and we refer to it as ``FASq1".  We again consider the multilevel nodal based space decomposition~$\mathcal{V} = \sum_{\ell=1}^{J} \sum_{i=1}^{N_{\ell}}\LC{\text{span}\{ \phi_i^{\ell}\}}$ but quadratic energy~$E_i$ defined as in~\eqref{linearEi} is used, which requires that we solve a linear system for each local correction.  \LC{In fact, since nodal based space decomposition is used here, we solve a scalar linear equation on each subspace.}
	\item 
The third implementation is a further simplified version and we refer it as ``FASq2". In this case, we use space decomposition~$\mathcal{V} = \sum_{\ell=1}^{J} \mathcal{V}^{\ell}$ and consider quadratic energy~\eqref{linearEi}. As mentioned in Remark~\ref{rem:quadratic-Ei}, this involves the Riesz map which can be computed by inverting an SPD matrix defined on $\mathcal{V}^\ell$. For our example, this is equivalent to solving a discrete Laplacian matrix on each level, which is still expensive.  Therefore, we solve the discrete Laplacian matrix approximately by just applying one step of symmetric Gauss-Seidel (SGS) method. \LC{This is because we use multilevel space decomposition here and SGS method is usually used as a smoother in multigrid methods for solving discrete Laplacian matrix. Of course, other types of iterative methods can also be used here, such as Richardson method and Jacobi method. For the sake of simplicity, we only consider SGS method here.}
	\end{enumerate}
In all of our numerical experiments, we use Newton's method to solve the nonlinear problem on the coarsest level. We use $10^{-10}$ as the tolerance and maximal number of iterations is $100$, which means that the coarse problem is solved exactly. Moreover, we use $\alpha_i = 1$ in the tests to make sure our implementation is simple and practical.  The overall stopping criterion of FAS is $10^{-10}$. 


	\begin{table}[htp]
\caption{Numerical results of FAS (varying $p$ and $\varepsilon$, fix $h = 1/64$)}  \label{tab:FAS}
	\begin{center}
		\vskip -8pt
		\resizebox{\linewidth}{!}{
		\begin{tabular}{|l||c|c|c|c|c|c|c|}
			\hline\hline
		FAS	& $\varepsilon^2 = 1$ & $\varepsilon^2 = 1/2$ & $\varepsilon^2 = 1/4$ & $\varepsilon^2 = 1/8$ & $\varepsilon^2 = 10^{-1}$ &   $\varepsilon^2 = 10^{-2}$	&   $\varepsilon^2 = 10^{-3}$ \\ \hline
			$p=4$	& 15 (0.195) & 15 (0.193) & 14 (0.189) & 14 (0.186) & 14 (0.186) & 12 (0.164) &  10 (0.133)	\\
			$p=5.5$	& 14 (0.195) & 14 (0.192) & 14 (0.189) & 14 (0.189) & 14 (0.189) & 12 (0.166) &  11 (0.162)	\\
			$p=6$	& 15 (0.195) & 15 (0.192) & 14 (0.190) & 14 (0.190) & 14 (0.189) & 13 (0.167) &  11 (0.167)	\\
			$p=8$	& 15 (0.196) & 15 (0.193) & 15 (0.192) & 14 (0.191) & 14 (0.190) & 13 (0.176) &  12 (0.173)	\\
			$p=10$	& 15 (0.198) & 15 (0.196) & 15 (0.194) & 15 (0.192) & 14 (0.191) & 13 (0.178) &  12 (0.170)	\\
			$p=20$	& 16 (0.216) & 16 (0.221) & 16 (0.210) & 15 (0.197) & 15 (0.194) & 14 (0.182) &  13 (0.178)	\\
			$p=40$	& 18 (0.267) & 18 (0.273) & 17 (0.248) & 16 (0.209) & 16 (0.204) & 14 (0.188) &  13 (0.180)	\\
			$p=80$	& 21 (0.333) & 21 (0.338) & 20 (0.304) & 18 (0.243) & 17 (0.226) & 15 (0.192) &  14 (0.200)	\\
			\hline\hline
		\end{tabular}
	}
	\end{center}
	\label{default}
	\end{table}%
		
	\vskip -16pt
	
	\begin{table}[htp]
\caption{Numerical results of FASq1 (varying $p$ and $\varepsilon$, fix $h = 1/64$)}
	\label{tab:FASq1}
	\begin{center}
				\vskip -8pt
		\resizebox{\linewidth}{!}{
		\begin{tabular}{|l||c|c|c|c|c|c|c|}
			\hline\hline
		FASq1	& $\varepsilon^2 = 1$ & $\varepsilon^2 = 1/2$ & $\varepsilon^2 = 1/4$ & $\varepsilon^2 = 1/8$ & $\varepsilon^2 = 10^{-1}$ &   $\varepsilon^2 = 10^{-2}$	&   $\varepsilon^2 = 10^{-3}$ \\ \hline
			$p=4$	& 15 (0.193) & 15 (0.189) & 14 (0.185) & 14 (0.180) & 13 (0.179) & 23 (0.331) &  -	\\
			$p=5.5$	& 15 (0.192) & 15 (0.189) & 14 (0.186) & 14 (0.184) & 14 (0.183) & - &  -	\\
			$p=6$	& 15 (0.192) & 15 (0.189) & 14 (0.187) & 14 (0.185) & 14 (0.183) & - &  -	\\
			$p=8$	& 15 (0.193) & 15 (0.190) & 14 (0.190) & 14 (0.191) & 14 (0.186) & - &  -	\\
			$p=10$	& 15 (0.195) & 15 (0.193) & 14 (0.191) & 14 (0.192) & 14 (0.187) & - &  -	\\
			$p=20$	& 16 (0.211) & 16 (0.215) & 16 (0.215) & 16 (0.216) & 16 (0.220) & - &  -	\\
			$p=40$	& 18 (0.260) & 18 (0.281) & 19 (0.298) & 21 (0.334) & 23 (0.367) & - &  -	\\
			$p=80$	& 21 (0.342) & 23 (0.383) & 25 (0.407) & 109 (0.844) & - & - & -	\\
			\hline\hline
		\end{tabular}
	}
	\end{center}
	\label{default}
	\end{table}%
		
		\vskip -16pt
	
	\begin{table}[htp]
\caption{Numerical results of FASq2 (varying $p$ and $\varepsilon$, fix $h = 1/64$)}
	\label{tab:FASq2}
	\begin{center}
				\vskip -8pt
		\resizebox{\linewidth}{!}{
		\begin{tabular}{|l||c|c|c|c|c|c|c|}
			\hline\hline
			FASq2	& $\varepsilon^2 = 1$ & $\varepsilon^2 = 1/2$ & $\varepsilon^2 = 1/4$ & $\varepsilon^2 = 1/8$ & $\varepsilon^2 = 10^{-1}$ &   $\varepsilon^2 = 10^{-2}$	&   $\varepsilon^2 = 10^{-3}$ \\ \hline
			$p=4$	& 14 (0.190) & 14 (0.187) & 14 (0.183) & 14 (0.181) & 14 (0.181) & - &  -	\\
			$p=5.5$	& 14 (0.189) & 14 (0.189) & 14 (0.183) & 14 (0.185) & 14 (0.187) & - &  -	\\
			$p=6$	& 14 (0.188) & 14 (0.186) & 14 (0.185) & 14 (0.188) & 14 (0.190) & - &  -	\\
			$p=8$	& 14 (0.190) & 14 (0.190) & 14 (0.188) & 14 (0.193) & 15 (0.196) & - &  -	\\
			$p=10$	& 15 (0.191) & 15 (0.191) & 15 (0.193) & 15 (0.199) & 15 (0.202) & - &  -	\\
			$p=20$	& 15 (0.211) & 16 (0.223) & 17 (0.239) & 18 (0.265) & 20 (0.290) & - &  -	\\
			$p=40$	& 18 (0.264) & 19 (0.300) & 21 (0.334) & 29 (0.452) & 49 (0.643) & - &  -	\\
			$p=80$	& 21 (0.350) & 24 (0.393) & 32 (0.504) & - & - & - & -	\\
			\hline\hline
		\end{tabular}
	}
	\end{center}
	\label{default}
	\end{table}%

	\begin{table}[htp]
\caption{Computational complexity comparison with $\varepsilon =1$ and $p=6$} 
\begin{center}
\vskip -8pt
\begin{tabular}{|c||c|c||c|c|}
\hline \hline 
  	& \multicolumn{2}{c||}{FAS} & \multicolumn{2}{c|}{FASq2}  \\ \hline
 $h$ &  \#iter &  CPU time &  \#iter &  CPU time \\ \hline
 $1/32$ 	& 15 & 1.65  	&  14  &  0.03  \\ 
 $1/64$ 	& 15 & 7.86  	&  14  &  0.05  \\ 
 $1/128$ 	& 16 & 45.60  	&  14  &  0.16  \\ 
 $1/256$ 	& 16 & 391.08   &  15  & 0.49 \\ 
 $1/512$ 	& 16 &  $>$1,000 	&  15  & 1.67 \\ 
 $1/1024$   & 16 &  $>$1,000  &  15  & 7.12 \\ 
 \hline \hline 
\end{tabular}
\end{center}
	\label{tab:poly}
	\end{table}%
	

In Table~\ref{tab:FAS}, \ref{tab:FASq1}, and~\ref{tab:FASq2}, we report the numerical results of FAS, FASq1, and FASq2, respectively. Here, we fix the finest mesh size $h = 1/64$ and the coarsest mesh size is $1/4$ but change $p$ and $\varepsilon$ to adjust the nonlinearity. In this case, bigger $p$ and/or smaller $\varepsilon$ lead to stronger nonlinearity. 

Number of iterations and convergence rates (in the parenthesis) are listed in Table~\ref{tab:FAS}, \ref{tab:FASq1}, and~\ref{tab:FASq2}. \LC{Notation ``-" means that the methods stagnates or diverges.}   As we can see, FAS is the most robust one and converges for all the choices of our parameters. The number of iterations are quite stable, ranging from $10-21$ iterations, and the convergence rate is about $0.2$.  \LC{This is consistent with our theoretical results presented in Section~\ref{sec:Original-FAS}. For FAS, the local energy $E_i$ is defined as the restriction of $E$ on the subspace. Then Assumption (AP) holds with $\epsilon < \mu/2$.  Therefore, according to Corollary~\ref{coro:FAS-convergence}, FAS converges robustly.} For FASq1 and FASq2, both implementations perform well when $p$ is relatively small and/or $\varepsilon$ is relatively big.  We can clearly see that the number of iterations grows when $p$ gets larger or $\varepsilon$ gets smaller.  Both implementations fail to converge when nonlinearity is strong, while FASq1 seems to be slightly more robust than FASq2 since it converges for slight larger set of parameters.  \LC{This observation also consists with Corollary~\ref{coro:FAS-convergence}. For both FASq1 and FASq2, the local energy $E_i$ is the quadratic energy~\eqref{linearEi}. When $p$ is relatively small and/or $\varepsilon$ is relatively big, the nonlinearity of the model problem is relatively weak and quadratic energy provides a good approximation in the sense that Assumption (AP) holds with $\epsilon < \mu/2$.  According to Corollary~\ref{coro:FAS-convergence}, the methods should converge.  However, when $p$ gets larger and/or $\varepsilon$ gets smaller, the problem becomes more nonlinear and quadratic energy is not a good approximation of the original energy $E$ any more.  Then Assumption (AP) does not hold with $\epsilon < \mu/2$ and, according to Corollary~\ref{coro:FAS-convergence}, the method may not converge.} Although FASq1 and FASq2 might not converge for strongly nonlinear problem, the advantage of using quadratic energy on local subspaces is that we only need to solve linear problems locally, which could save computational cost considerably.  

Next, we compare the CPU time of FAS and FASq2. The reason we choose FASq2 to compare is that FASq2 only involves symmetric Gauss-Seidel smoother on each level, which basically has the same cost as the multigrid method for solving linear problems. This could dramatically improve the computational complexity for solving our model problem~\eqref{eqn:pnonlinear} and the results are shown in Table~\ref{tab:poly}.


In Table~\ref{tab:poly}, we fix $\varepsilon =1$ and $p=6$ and change $h$. As we can see, for these choices of $p$ and $\varepsilon$, the quadratic energy provides a good approximation of the global energy restricted to the subspace, therefore, the number of iterations of FASq2 is similar with the number of iterations of FAS and remain robust with respect to the mesh size $h$.  The CPU time of FAS grows faster than linear, which is due to the inefficiency of large \mc{for} loops in MATLAB.

  In contrast, FASq2 is significantly faster than FAS and scales linearly.  This demonstrates that, when nonlinearity is mild, we can use a simple quadratic energy and save computational cost. 

On the other hand, we want to point out that FAS is more robust than FASq2 as shown before.  We have also tested the quadratic energy defined by the Hessian at the previous iteration c.f., \eqref{NewtonEi}, which is more or less equivalent to using one approximated Newton's iteration,  and the results are similar. Therefore, in practice, we should consider the trade-off between robustness and efficiency in order to decide which kind of local energy should be used on each subspace.


	\section*{Acknowledgments}
The authors wish to thank the anonymous referees for their incisive comments, which helped to improve the paper greatly. The authors also thank Hao Luo, Jea-Hyun Park, and Abner Salgado for carefully reading the revised manuscript and for suggesting several improvements.

	\bibliographystyle{plain}
	\bibliography{FASD.bib}
	\end{document}